\newtheorem{theorem}{Theorem}[section]
\newtheorem{proposition}[theorem]{Proposition}
\newtheorem{lemma}[theorem]{Lemma}
\theoremstyle{definition}
\newtheorem{definition}[theorem]{Definition}
\theoremstyle{remark}
\newcommand{\R}{\mathbb{R}}
\newcommand{\N}{\mathbb{N}}
\newcommand{\Z}{\mathbb{Z}}
\newcommand{\inn}{\textnormal{in}}
\newcommand{\out}{\textnormal{out}}
\newcommand{\ind}{\textnormal{index}}
\newcommand{\E}{{\mathcal E}}
\newcommand{\B}{{\cal B}}
\newcommand{\dom}{\textnormal{dom}}
\newcommand{\F}{\mathcal{F}}
\newcommand{\G}{\mathcal{G}}
\begin{document}

\begin{center}
{\large{\bf 
Stability in simple heteroclinic networks in $\R^4$}}\\
\mbox{} \\
\begin{tabular}{cc}
{\bf Sofia B.\ S.\ D.\ Castro$^{\dagger, *}$} & {\bf Alexander Lohse$^{\ddagger}$} \\
{\small sdcastro@fep.up.pt} & {\small alexander.lohse@math.uni-hamburg.de}
\end{tabular}

\end{center}

\noindent $^{*}$ Corresponding author. Phone: + 351 220 426 373. Fax: +351 225 505 050.

\noindent $^{\dagger}$ Faculdade de Economia and Centro de Matem\'atica, Universidade do Porto, Rua Dr.\ Roberto Frias, 4200-464 Porto, Portugal.

\noindent $^{\ddagger}$ Fachbereich Mathematik, Universit\"at Hamburg, Bundesstra{\ss}e 55, 20146 Hamburg, Germany.

\vspace{1cm}

\begin{abstract}
We describe all heteroclinic networks in $\R^4$ made of simple heteroclinic cycles of types $B$ or $C$, with at least one common connecting trajectory. For networks made of cycles of type $B$, we study the stability of the cycles that make up the network as well as the stability of the network. We show that even when none of the cycles has strong stability properties the network as a whole may be quite stable. We prove, and provide illustrative examples of, the fact that the stability of the network does not depend {\em a priori} uniquely on the stability of the individual cycles.
\end{abstract} 

\noindent {\em Keywords:} heteroclinic network, heteroclinic cycle, stability

\vspace{.3cm}

\noindent {\em AMS classification:} 34C37, 37C80, 37C75
\vspace{2cm}

\section{Introduction}

Ever since symmetry provided a way of constructing robust heteroclinic cycles, the study of stability properties of such objects has been of interest. See, for instance, the work of Krupa and Melbourne \cite{KrupaMelbourne95a, KrupaMelbourne95b,KrupaMelbourne2004}, Melbourne \cite{Melbourne1991} and, more recently, that of Podvigina and Ashwin \cite{PodviginaAshwin2011} and Podvigina \cite{Podvigina2012}. 

It is clear that when joining heteroclinic cycles to produce a network none of the cycles can be asymptotically stable. In this instance, several intermediate notions of stability have been introduced by Melbourne \cite{Melbourne1991}, Brannath \cite{Brannath}, Podvigina and Ashwin \cite{PodviginaAshwin2011}. The strongest of these is {\em predominant asymtotic stability}, p.a.s.\ in the sequel. A p.a.s.\ invariant object will attract a set of points in its neighbourhood big enough so that the invariant object may be visible in numerical simulations and experiments. We therefore focus on this kind of stability.

Recently, Podvigina and Ashwin \cite{PodviginaAshwin2011} have defined the stability index of a trajectory and the second author \cite{Lohse} has been making use of this to determine necessary and sufficient conditions for p.a.s.\ of heteroclinic cycles. We make extensive use of these results to study not only stability of each cycle in a network, but, by combining stability indices at a local and global level, to determine which cycle attracts more trajectories in a neighbourhood of the network, that is, which cycle may be visible in simulations. A similar question has been addressed by
Kirk and Silber \cite{KS} who have shown by example that when looking at attraction properties of cycles which are part of a heteroclinic network, these are patent even when none of the cycles is stable. We contribute to a better understanding of this phenomenon by providing an exhaustive treatment of the stability and attractiveness of simple cycles making up a network in $\R^4$. It is important to notice that we need the stability index at two levels in order to draw conclusions.

Additionally, we show that a network may be p.a.s.\ while only one or both of the two cycles it is made of are non-p.a.s.. 
While this might induce the thought that joining heteroclinic cycles into heteroclinic networks has some stabilizing effect, we also present a non-p.a.s.\ network constructed from two cycles, only one of which is p.a.s..
Thus, the stability of each cycle does not condition the stability of the heteroclinic network meaning that the study of the stability of heteroclinic networks and cycles in heteroclinic networks is still at its onset.

\medskip

In the next section we provide definitions and results which will be used in subsequent sections. Some of the results were obtained by the second author alone and their proof will appear elsewhere. See also Lohse \cite{Lohse}. Section \ref{simple_networks} is devoted to the description of all possible networks in $\R^4$ consisting of simple cycles of types $B$ and $C$ with at least one common connecting trajectory. There are only three such networks, one of them being the one extensively studied by Kirk and Silber \cite{KS}. In Appendix A we construct and study the remaining network involving only type $B$ cycles. Section \ref{stability_cycles} provides results for calculating the stability indices along connections of a cycle. In section \ref{B3B3_network} we study a network made of two cycles of type $B^-_3$, already looked at in \cite{KS}. We determine the stability of each cycle and  decide when the network as a whole has some kind of stability. In such cases, we establish which cycle is more likely to be observed in the network. On the one hand, this provides an alternative, more systematic, description of the results obtained in \cite{KS} (subsection \ref{another-view}). On the other hand, we address some unstudied cases, thus completing the study of competition between cycles in the network of \cite{KS} (subsections \ref{positive-transverse} and \ref{stabilizing}). Section \ref{stability_network} concludes by building on results of the previous section to show that the stability of the network does not depend {\em a priori} solely on the stability of each cycle. 

\section{Preliminary results} \label{preliminaries}

Consider a vector field in $\R^4$ described by a set of differential equations $\dot{y}=f(y)$, where $f$ is $\Gamma$-equivariant for some finite subgroup of $O(4)$, $\Gamma$, that is,
$$
f(\gamma .y)=\gamma. f(y), \; \; \; \forall \; \gamma \in \Gamma \; \; \forall \; y \in \R^4.
$$
A {\em heteroclinic cycle} consists of equilibria $\xi_i$, $i=1, \hdots, m$ together with trajectories which connect them:
$$
[\xi_i \rightarrow \xi_{i+1}] \subset W^u(\xi_i) \cap W^s(\xi_{i+1}) \neq \emptyset.
$$
We assume $\xi_{m+1}=\xi_1$ and use $X$ to represent the heteroclinic cycle. It is well-known that demanding that the connection $[\xi_i \rightarrow \xi_{i+1}]$ be of saddle-sink type in an invariant subspace is enough to ensure robustness of the cycle.

Heteroclinic cycles are called {\em simple} if the connections between consecutive equilibria are contained in a $2$-dimensional subspace. We use the definition of \cite[p.\ 1181]{KrupaMelbourne2004}:
let $\Sigma_j \subset \Gamma$ be an isotropy subgroup and let $P_j=\textnormal{Fix}(\Sigma_j)$. Assume that for all $j=1,\hdots , m$ the connection $[\xi_j \rightarrow \xi_{j+1}]$ is a saddle-sink connection in $P_j$. Write $L_j=P_{j-1}\cap P_j$. A robust heteroclinic cycle $X \subset \R^4\backslash \{ 0\}$ is {\em simple} if 
\begin{enumerate}
	\item[(i)] dim$P_j=2$ for each $j$;
	\item[(ii)] X intersects each connected component of $L_j\backslash \{ 0\}$ in at most one point.
\end{enumerate}

We focus on simple cycles with no double eigenvalues, which seems to have been silently assumed in most of the literature. Recently, these cycles have been renamed {\em very simple} by Podvigina and Chossat \cite{PC2013}. We shall retain the designation {\em simple} and say that a heteroclinic network is {\em simple} if it is made of simple heteroclinic cycles.

 A classification of simple cycles into types A, B and C appears in Chossat {\em et al.} \cite{CKMS} who are concerned with bifurcations of cycles. The same classification is used in the context of stability of cycles by Krupa and Melbourne \cite{KrupaMelbourne2004} and Podvigina and Ashwin \cite{PodviginaAshwin2011}. Some type B and C cycles are grouped into a further type, Z, which appears in Podvigina \cite{Podvigina2012}. We use the original classification into types A, B and C, which we reproduce here from  \cite{KrupaMelbourne2004}.

\begin{definition}[Definition 3.2 in Krupa and Melbourne \cite{KrupaMelbourne2004}] 
Let $X \subset \R^4$ be a simple robust heteroclinic cycle.
\begin{enumerate}
	\item[(i)]  $X$ is of {\em type A} if $\Sigma_j=\Z_2$ for all $j$.
	
	\item[(ii)]  $X$ is of {\em type B} if there is a fixed-point subspace $Q$ with dim$Q=3$, such that $X \subset Q$.
	
	\item[(iii)]  $X$ is of {\em type C} if it is neither of type A nor of type B.
\end{enumerate}
\end{definition}

We are at present going to consider networks consisting of cycles only of types $B$ and/or $C$. Networks involving cycles of type $A$ will be treated separately.

For the cycles of types $B$ and $C$, further notation is used to indicate the order of the cycle, as well as whether $-I$ (minus the identity) is an element of $\Gamma$. For instance, $B_3^-$ represents a cycle of type $B$ with three nodes such that $-I \in \Gamma$, whereas $B_2^+$ represents a cycle of type $B$ with two nodes such that $-I \notin \Gamma$.

Criteria for the asymptotic stability of the cycles, depending on the eigenvalues of the vector field at each equilibrium, have been established by Krupa and Melbourne \cite{KrupaMelbourne95a,KrupaMelbourne2004}. When more than one cycle is put together in a heteroclinic network, none of the cycles is asymptotically stable. Instead, intermediate notions of stability have been introduced by Melbourne \cite{Melbourne1991}, Brannath \cite{Brannath}, Kirk and Silber \cite{KS}. We point out that the notion of ``essential asymptotic stability'' is used differently in Melbourne \cite{Melbourne1991} and in Brannath \cite{Brannath}. The definition of Melbourne \cite{Melbourne1991} is taken up by Podvigina and Ashwin \cite{PodviginaAshwin2011}, whereas that of Brannath \cite{Brannath} is renamed ``predominant asymptotic stability'' by Podvigina and Ashwin \cite{PodviginaAshwin2011}.
We use Podvigina and Ashwin \cite{PodviginaAshwin2011} as a reference for those concepts relevant for our results.

In the following, we denote by $B_{\varepsilon}(X)$ an $\varepsilon$-neighbourhood of a (compact, invariant) set $X \subset \R^n$. We write $\B(X)$ for the basin of attraction of $X$, i.e. the set of points $x \in \R^n$ with $\omega(x) \subset X$. For $\delta>0$ the $\delta$-local basin of attraction is $\B_\delta(X):=\{x \in \B(X) \mid \phi_t(x) \in B_\delta(X) \: \forall t>0  \}$, where $\phi_t(.)$ is the flow generated by the system of equations. By $\ell(.)$ we denote Lebesgue measure.

The following is the strongest intermediate notion of stability.

\begin{definition}[Definition 4 in Podvigina and Ashwin  \cite{PodviginaAshwin2011}]
A compact invariant set $X$ is called {\em predominantly asymptotically stable (p.a.s.)} if it is asymptotically stable relative to a set $N \subset \R^n$ with the property that
\begin{align*}
\lim\limits_{\varepsilon \to 0} \frac{\ell(B_{\varepsilon}(X) \cap N)}{\ell(B_\varepsilon(X))} = 1.
\end{align*}
\end{definition}

The same authors have introduced the following stability index as a means of quantifying the attractiveness of a compact, invariant set $X$. See Definition 5 and section 2.3 in Podvigina and Ashwin \cite{PodviginaAshwin2011}.

\begin{definition}
For $x \in X$ and $\varepsilon, \delta >0$ define 
\begin{align*}
\Sigma_\varepsilon(x)&:=\frac{\ell(B_\varepsilon(x) \cap \B(X))}{\ell(B_\varepsilon(x))}, \qquad \Sigma_{\varepsilon,\delta}(x):=\frac{\ell(B_\varepsilon(x) \cap \B_\delta(X))}{\ell(B_\varepsilon(x))}.
\intertext{Then the {\em stability index} at $x$ with respect to $X$ is defined to be}
\sigma(x)&:=\sigma_+(x)-\sigma_-(x),
\intertext{where}
\sigma_-(x)&:= \lim\limits_{\varepsilon \to 0} \left[ \frac{\textnormal{ln}(\Sigma_\varepsilon(x) )}{\textnormal{ln}(\varepsilon)}  \right], \qquad \sigma_+(x):= \lim\limits_{\varepsilon \to 0} \left[ \frac{\textnormal{ln}(1-\Sigma_\varepsilon(x) )}{\textnormal{ln}(\varepsilon)}  \right].
\intertext{The convention that $\sigma_-(x)=\infty$ if $\Sigma_\varepsilon(x)=0$ for some $\varepsilon>0$ and $\sigma_+(x)=\infty$ if $\Sigma_\varepsilon(x)=1$ is introduced. Therefore, $\sigma(x) \in [-\infty, \infty]$. In the same way the {\em local stability index} at $x \in X$ is defined to be}
\sigma_{\textnormal{loc}}(x)&:=\sigma_{\textnormal{loc},+}(x)-\sigma_{\textnormal{loc},-}(x),
\intertext{with}
\sigma_{\textnormal{loc},-}(x):= \lim\limits_{\delta \to 0} &\lim\limits_{\varepsilon \to 0} \left[ \frac{\textnormal{ln}(\Sigma_{\varepsilon,\delta}(x))}{\textnormal{ln}(\varepsilon)}  \right], \: \sigma_{\textnormal{loc},+}(x):= \lim\limits_{\delta \to 0} \lim\limits_{\varepsilon \to 0} \left[ \frac{\textnormal{ln}(1-\Sigma_{\varepsilon,\delta}(x))}{\textnormal{ln}(\varepsilon)}  \right].
\end{align*}
\end{definition}

 \begin{figure}[!htb]
 \centerline{
 \includegraphics[width=0.7\textwidth]{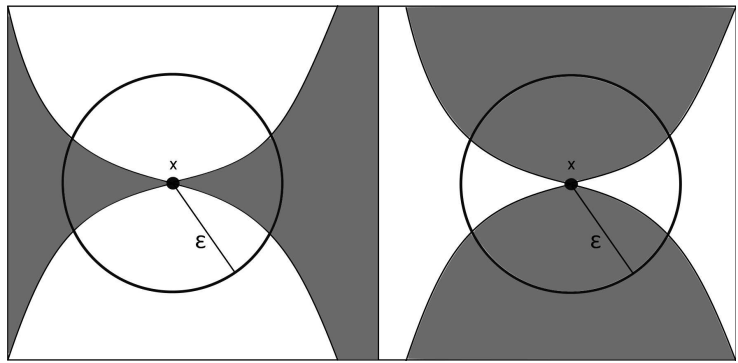}}
 \caption{Geometry of the basin of attraction (shaded region) at $x$ when $\sigma(x)<0$ (left), $\sigma(x)>0$ (right)}\label{stabindex}
 \end{figure}

The stability index $\sigma(x)$ quantifies the local extent (at $x \in X$) of the basin of attraction of $X$. If $\sigma(x)>0$, then in a small neighbourhood of $x$ an increasingly large portion of points is attracted to $X$, see Figure \ref{stabindex} (right), where the grey area is $\B(X)$. If on the other hand $\sigma(x)<0$, then the portion of such points goes to zero as the neighbourhood shrinks, also shown in Figure \ref{stabindex} (left).

Podvigina and Ashwin prove (see Theorem 2.2 in \cite{PodviginaAshwin2011}) that both $\sigma(x)$ and $\sigma_{\textnormal{loc}}(x)$ are constant along trajectories. This allows us to characterize the attraction properties of a heteroclinic cycle in terms of the stability index by calculating only a finite number of indices. Moreover, Podvigina and Ashwin also show that the calculation of the indices can be simplified by restricting to a transverse section (see Theorem 2.4 in \cite{PodviginaAshwin2011}).

Given Theorem \ref{local_index} below we calculate only local stability indices, which is why from section \ref{simple_networks} onwards we drop the subscript {\em loc}. These may be calculated using a cycle or the whole network as the invariant set $X$. In order to distinguish these two cases, we write $\sigma$ or $\sigma^c$ when the stability index is calculated with respect to a cycle and refer to it as a $c$-index. If the stability index is calculated with respect to the whole network we write $\sigma^n$ and refer to it as a $n$-index.

We now state two elementary results, that help us determine stability indices with respect to heteroclinic networks in section \ref{stability_cycles}.

\begin{lemma}\label{index_cycle-network}
Let $X$ be a heteroclinic network and $C \subset X$ a heteroclinic cycle. Then for all $x \in C$ we have $\sigma^n(x) \geq \sigma^c(x)$.
\end{lemma}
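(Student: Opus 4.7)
The plan is to reduce the inequality to a set-theoretic inclusion of basins, then translate it through the definitions of $\sigma^c$ and $\sigma^n$. Since $C \subset X$, any point $y$ with $\omega(y) \subset C$ also satisfies $\omega(y) \subset X$, so $\B(C) \subset \B(X)$. For the local versions, $B_\delta(C) \subset B_\delta(X)$, so a trajectory confined to $B_\delta(C)$ is also confined to $B_\delta(X)$, giving $\B_\delta(C) \subset \B_\delta(X)$ for every $\delta > 0$. Intersecting with $B_\varepsilon(x)$ and dividing by $\ell(B_\varepsilon(x))$ yields the pointwise inequality
\begin{equation*}
\Sigma^{\,C}_{\varepsilon,\delta}(x) \leq \Sigma^{\,X}_{\varepsilon,\delta}(x) \qquad \text{for all } \varepsilon, \delta > 0,
\end{equation*}
where the superscript indicates the invariant set with respect to which the quantity is computed. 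Equivalently, $1 - \Sigma^{\,C}_{\varepsilon,\delta}(x) \geq 1 - \Sigma^{\,X}_{\varepsilon,\delta}(x)$.

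Now I would apply the logarithm. Both quantities lie in $[0,1]$, hence their logarithms are non-positive, and $\ln \varepsilon < 0$ for $\varepsilon$ small. Dividing by $\ln \varepsilon$ reverses the inequality, so
\begin{equation*}
\frac{\ln \Sigma^{\,C}_{\varepsilon,\delta}(x)}{\ln \varepsilon} \geq \frac{\ln \Sigma^{\,X}_{\varepsilon,\delta}(x)}{\ln \varepsilon}, \qquad \frac{\ln(1-\Sigma^{\,C}_{\varepsilon,\delta}(x))}{\ln \varepsilon} \leq \frac{\ln(1-\Sigma^{\,X}_{\varepsilon,\delta}(x))}{\ln \varepsilon}.
\end{equation*}
Passing to the iterated limits $\varepsilon \to 0$ and then $\delta \to 0$ as in the definition preserves these inequalities and produces $\sigma^c_{\textnormal{loc},-}(x) \geq \sigma^n_{\textnormal{loc},-}(x)$ and $\sigma^c_{\textnormal{loc},+}(x) \leq \sigma^n_{\textnormal{loc},+}(x)$. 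Subtracting gives $\sigma^c(x) \leq \sigma^n(x)$, which is the claim (and the same argument works without the $\delta$ for the non-local version, if needed). The only point requiring any care is the sign reversal when dividing by $\ln \varepsilon$; once that is tracked, the result is immediate. There is no real obstacle, which is consistent with the statement being an ``elementary'' lemma intended as a bookkeeping device for the later global–local index comparisons in Section \ref{stability_cycles}.
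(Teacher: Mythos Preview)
Your argument is correct and is exactly the straightforward basin-inclusion computation one expects: the paper itself does not spell out a proof but calls it ``straightforward'' and defers to \cite{Lohse}, and your derivation via $\B_\delta(C)\subset\B_\delta(X)$, monotonicity of $\ln$, and the sign reversal from dividing by $\ln\varepsilon<0$ is the natural way to fill that in. The edge cases covered by the conventions $\sigma_\pm=\infty$ are consistent with your inequalities, so nothing further is needed.
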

\begin{proof}
The proof is straightforward and can be found in \cite{Lohse}.
\end{proof}

\begin{lemma}\label{index_calculation}
Let $X \subset \R^n$ be a heteroclinic cycle (or network) and $x \in X$ a point on a connecting trajectory. Suppose that for all points $y=(y_1,...,y_n) \in B_\varepsilon(x)$, stability with respect to $X$ depends only on their $(y_1,y_2)$-components. Furthermore, assume that
\begin{align*}
&\B(X)\cap B_\varepsilon(x)=B_\varepsilon(x) \setminus \bigcup_{m \in \N} \E_m,
\intertext{where $\E_m$ are non-empty, disjoint sets of the form}
&\E_m =\left\{ y \in B_\varepsilon(x) \; \bigg\vert \; k_my_1^{\alpha_m} \leq y_2 \leq \hat{k}_my_1^{\alpha_m} \right\},
\intertext{with constants $k_m, \hat{k}_m >0$. Suppose that $(\alpha_m)_{m \in \N}$ is bounded away from $1$ and not all $\alpha_m$ are negative. Then with $\alpha_{max}:=\textnormal{max} \{ \alpha_m \mid 0<\alpha_m<1 \}$ and $\alpha_{min}:=\textnormal{min} \{ \alpha_m \mid \alpha_m>1 \}$ we have}
&\sigma(x)=-1+\textnormal{min} \left\{\frac{1}{\alpha_{max}}, \alpha_{min}   \right\} >0.
\end{align*}
\end{lemma}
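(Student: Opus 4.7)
The plan is a reduction from an $n$-dimensional measure problem to a planar one, followed by a Laplace-type extraction of the leading power. First I would exploit the hypothesis that stability near $x$ depends only on $(y_1,y_2)$: each $\E_m$ is, up to the curved boundary imposed by $B_\varepsilon(x)$, a cylinder over a planar cusp $\E_m^{2D}$. By Fubini, integrating the $(n-2)$-dimensional slice volume (which is $\asymp \varepsilon^{n-2}$ for $(y_1,y_2)$ comfortably inside the disk $D_\varepsilon(x_1,x_2)$) over $\E_m^{2D}\cap D_\varepsilon$, I obtain
\begin{align*}
\frac{\ell\bigl(B_\varepsilon(x)\cap \E_m\bigr)}{\ell\bigl(B_\varepsilon(x)\bigr)}\sim K_n\,\frac{\ell_2\bigl(D_\varepsilon\cap \E_m^{2D}\bigr)}{\varepsilon^2},
\end{align*}
with $K_n$ depending only on $n$. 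This is why the dimension $n$ does not appear in the asserted formula.

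Next I would compute each planar cusp area. When $\alpha_m>1$, integrating in $y_1\in[0,\varepsilon]$ with $y_2$-thickness $(\hat k_m-k_m)y_1^{\alpha_m}$ yields $\ell_2(D_\varepsilon\cap \E_m^{2D})\asymp \varepsilon^{1+\alpha_m}$. When $0<\alpha_m<1$, the cusp is instead thin in the $y_1$-direction; inverting to $y_1\asymp y_2^{1/\alpha_m}$ and integrating in $y_2\in[0,\varepsilon]$ yields $\ell_2\asymp \varepsilon^{1+1/\alpha_m}$. When $\alpha_m<0$, the lower bound $y_2\ge k_my_1^{\alpha_m}$ forces $y_2\ge k_m\varepsilon^{\alpha_m}$, which exceeds $\varepsilon$ for all sufficiently small $\varepsilon$, so $\E_m\cap B_\varepsilon(x)=\emptyset$ eventually and such cusps do not contribute. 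Writing $\gamma_m:=\alpha_m$ for $\alpha_m>1$ and $\gamma_m:=1/\alpha_m$ for $0<\alpha_m<1$ (so $\gamma_m>1$ in every relevant case), each cusp contributes a term of order $\varepsilon^{\gamma_m-1}$ to $1-\Sigma_\varepsilon(x)$.

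Finally I would assemble the stability index. Since $\Sigma_\varepsilon(x)\to 1$, we have $\sigma_-(x)=0$. Summing the above estimates gives
\begin{align*}
1-\Sigma_\varepsilon(x)=\sum_m c_m\,\varepsilon^{\gamma_m-1}\bigl(1+o(1)\bigr),
\end{align*}
and the hypothesis that the $\alpha_m$ are bounded away from $1$ translates directly into the exponents $\gamma_m-1$ being bounded away from $0$. Combined with $\sum_m c_m<\infty$ (a consequence of the $\E_m$ being disjoint inside a fixed neighbourhood), this lets me conclude that the sum is asymptotically dominated by its smallest exponent, namely $\min_m(\gamma_m-1)=\min\{1/\alpha_{\max}-1,\alpha_{\min}-1\}$. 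Hence $\sigma_+(x)=\min_m(\gamma_m-1)$ and, combining with $\sigma_-(x)=0$, we get $\sigma(x)=-1+\min\{1/\alpha_{\max},\alpha_{\min}\}$, which is strictly positive since $1/\alpha_{\max}>1$ and $\alpha_{\min}>1$. The principal technical obstacle is precisely this last step: showing that, in the presence of a possibly countably infinite family of cusps, the leading-order term really dominates. This is exactly what the separation of the $\alpha_m$ from $1$ buys, since otherwise the exponents $\gamma_m-1$ could accumulate at $0$ and spoil the clean power-law behaviour.
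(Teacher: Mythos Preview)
Your argument is correct and follows the same route as the paper: integrate each cusp to obtain $\ell(\E_m\cap B_\varepsilon(x))\asymp\varepsilon^{\,n-1+\gamma_m}$ with $\gamma_m=\alpha_m$ (for $\alpha_m>1$) or $\gamma_m=1/\alpha_m$ (for $0<\alpha_m<1$), divide by $\varepsilon^n$, note that $\sigma_-(x)=0$, and read off $\sigma_+(x)$ as the smallest exponent $\gamma_m-1$. Your treatment is in fact more explicit than the paper's (which simply refers to \cite{Lohse} for the infinite-sum details and does not separately discuss $\alpha_m<0$); one small caution is that disjointness of the $\E_m$ does not literally force $\sum_m c_m<\infty$, since the $c_m$ may grow provided the $\gamma_m$ grow with them---but in every application in this paper the exponents $\alpha_m$ are monotone and the issue does not arise.
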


\begin{proof}
For each $\alpha_m$ there is $\varepsilon>0$ small enough such that by straightforward integration we obtain
\begin{align*}
&\ell(\E_m \cap B_\varepsilon(x)) =
\begin{cases}
c_m \varepsilon^{n-1+\alpha_m } \qquad &\text{if} \quad \alpha_m > 1\\
\tilde{c}_m \varepsilon^{n-1+\frac{1}{\alpha_m}} \qquad &\text{if} \quad \alpha_m < 1.
\end{cases}
\intertext{Here (and in the following) we use $c_m,\tilde{c}_m>0$ to group together all constant terms. Since $\ell(B_\varepsilon(x))$ is of order $\varepsilon^n$ this yields}
&\Sigma_\varepsilon(x) =1- \sum\limits_{\alpha_m < 1} \tilde{c}_m \varepsilon^{-1+\frac{1}{\alpha_m}} - \sum\limits_{\alpha_m > 1} c_m \varepsilon^{-1+\alpha_m}.
\end{align*}

Thus, $\sigma_-(x)=0$, and calculating the limit $\sigma_+(x)$ gives the claimed result. For more details see \cite{Lohse}.
\end{proof}

When looking for the stability of the network the following result is very useful.

\begin{theorem}[see \cite{Lohse}] \label{local_index}
Let $X \subset \R^n$ be a heteroclinic cycle or network with finitely many equilibria and connecting trajectories. Suppose that the local stability index $\sigma_{\textnormal{loc}}(x)$ exists and is not equal to zero for all $x \in X$. Then $X$ is predominantly asymptotically stable if and only if $\sigma_{\textnormal{loc}}(x)>0$ along all connecting trajectories.
\end{theorem}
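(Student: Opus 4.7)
The plan is to reformulate predominant asymptotic stability as a measure estimate on the complement $B_\varepsilon(X) \setminus \B_\delta(X)$ and then to control this complement trajectory-by-trajectory using the local stability index. Two ingredients from Podvigina and Ashwin \cite{PodviginaAshwin2011} are central: the constancy of $\sigma_{\textnormal{loc}}$ along trajectories (their Theorem 2.2) and its computability on a transverse cross-section (their Theorem 2.4). Since $X$ has only finitely many connecting trajectories, $\sigma_{\textnormal{loc}}$ takes only finitely many values on them, and by compactness $B_\varepsilon(X)$ can be covered by finitely many $\varepsilon$-tubes around the connections together with finitely many $\varepsilon$-balls around the equilibria. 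Also, because $X$ is one-dimensional, $\ell(B_\varepsilon(X)) \sim c\,\varepsilon^{n-1}$.

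For the \emph{if} direction, assume $\sigma_{\textnormal{loc}}(x) > 0$ along every connecting trajectory, and let $\sigma_* > 0$ be the minimum of the finitely many values the indices take on these trajectories. Fix a small $\delta > 0$ and set $N := \B_\delta(X)$. On each tubular $\varepsilon$-neighbourhood $T^\varepsilon_j$ of a connection $\gamma_j$, the cross-sectional bound $1 - \Sigma_{\varepsilon,\delta}(x) \leq c_j\,\varepsilon^{\sigma_{\textnormal{loc}}(x)}$ integrated along $\gamma_j$ (Fubini) gives $\ell(T^\varepsilon_j \setminus N) \leq C_j\,\varepsilon^{n-1+\sigma_*}$, while each $\varepsilon$-ball around an equilibrium contributes at most $C\,\varepsilon^n$. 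Summing and dividing by $\ell(B_\varepsilon(X)) \sim c\,\varepsilon^{n-1}$ yields a bound of order $\varepsilon^{\sigma_*} + \varepsilon$ that tends to zero, which establishes the measure condition and hence p.a.s.

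For the \emph{only if} direction I argue contrapositively: suppose $\sigma_{\textnormal{loc}}(x_0) < 0$ at some $x_0$ on a connection $\gamma$. By constancy along $\gamma$, the same sign holds on every compact sub-arc $\gamma_L \subset \gamma$ of positive length, and the transverse estimate reads $\Sigma_{\varepsilon,\delta}(x) \leq c\,\varepsilon^{|\sigma_{\textnormal{loc}}(x_0)|}$ uniformly on $\gamma_L$. Integrating along $\gamma_L$ gives $\ell(T^\varepsilon_L \cap \B_\delta(X)) = o(\ell(T^\varepsilon_L))$; since $\ell(T^\varepsilon_L)$ is a fixed positive fraction of $\ell(B_\varepsilon(X))$, the portion of $B_\varepsilon(X)$ lying outside $\B_\delta(X)$ stays bounded away from zero. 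If $X$ were p.a.s.\ via some set $N$, asymptotic stability relative to $N$ would force $N \cap B_\varepsilon(X) \subset \B_\delta(X)$ for all small $\varepsilon$, so the p.a.s.\ ratio would be dominated by $\ell(B_\varepsilon(X) \cap \B_\delta(X))/\ell(B_\varepsilon(X))$, which does not converge to $1$, contradicting the measure condition.

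The main technical obstacle is uniformity in the Fubini step: the constant in $\Sigma_{\varepsilon,\delta}(x) \leq c\,\varepsilon^{\sigma_{\textnormal{loc}}(x)}$ must be controlled uniformly as $x$ ranges over the compact sub-arc $\gamma_L$ (or the whole of $\gamma_j$ in the forward direction), and overlaps between distinct tubes and equilibrium balls must not inflate the total complement measure beyond the claimed bounds. These technicalities run along the lines of the explicit calculations in Lemma \ref{index_calculation} and are settled in Lohse \cite{Lohse}.
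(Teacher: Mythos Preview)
The paper does not contain its own proof of this theorem: it is stated with a bare reference to \cite{Lohse} and no argument is given here, so there is nothing in the paper to compare your proposal against. Your outline is the natural strategy and matches what one expects the cited thesis to do --- cover $B_\varepsilon(X)$ by finitely many tubular pieces around the connections together with balls at the equilibria, use constancy of $\sigma_{\textnormal{loc}}$ along trajectories and its cross-sectional computability (Theorems 2.2 and 2.4 of \cite{PodviginaAshwin2011}), and run a Fubini estimate on each tube. The uniformity issue you flag is indeed the main technical point.

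One genuine wrinkle in your \emph{if} direction: taking $N=\B_\delta(X)$ for a single fixed $\delta$ gives the measure ratio you want, but it does not by itself guarantee that $X$ is asymptotically stable relative to $N$ in the full Lyapunov sense --- points of $\B_\delta(X)$ are only promised to stay in $B_\delta(X)$, not in an arbitrary smaller neighbourhood $U$. The standard fix is a diagonal argument: your estimate actually shows that for \emph{every} $\delta>0$ the ratio $\ell(B_\varepsilon(X)\cap\B_\delta(X))/\ell(B_\varepsilon(X))\to 1$, so one can choose $\delta_k\downarrow 0$ and $\varepsilon_k\downarrow 0$ with the ratio for $\B_{\delta_k}(X)$ exceeding $1-1/k$ on $(0,\varepsilon_k)$, and patch together $N$ from the sets $\B_{\delta_k}(X)\cap(B_{\varepsilon_{k-1}}(X)\setminus B_{\varepsilon_k}(X))$. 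Your \emph{only if} direction is fine as written.
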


We note that the result does not hold if we consider global stability indices instead of local ones. For global stability indices we have the following:
\begin{lemma}
Let $[\xi_i \rightarrow \xi_j]$ be a common connecting trajectory between two non-homoclinic cycles constituting a simple heteroclinic network $X$ in $\R^4$. Suppose that for at least one of the cycles the return maps are contractions. Let $\sigma_{ij}$ and $\tilde{\sigma}_{ij}$ be the global stability indices for each cycle and $\sigma^n_{ij}$ the global stability index with respect to the whole network. We have $\sigma^n_{ij}>0$ if and only if $\sigma_{ij}>0$ or $\tilde{\sigma}_{ij}>0$.
\end{lemma}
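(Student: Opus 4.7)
The plan is to combine Lemma~\ref{index_cycle-network} with a basin-comparison argument across the common connection.

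For the ``if'' direction, Lemma~\ref{index_cycle-network} yields $\sigma^n_{ij} \geq \max\{\sigma_{ij}, \tilde\sigma_{ij}\}$ directly, so if either cycle index at a point $x \in [\xi_i \to \xi_j]$ is positive then so is $\sigma^n_{ij}$.

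For the converse I would argue by contraposition: assume $\sigma_{ij} \leq 0$ and $\tilde\sigma_{ij} \leq 0$ and deduce $\sigma^n_{ij} \leq 0$. Fix $x$ on the common connection and work on a small transverse cross-section through $x$. Denote the two cycles by $C_1$ and $C_2$, with $C_1$ the one whose return map is a contraction. The key dynamical fact is that any trajectory entering a sufficiently small tubular neighbourhood of $C_1$ remains there and converges to $C_1$. A point $y$ with $\omega(y) \subseteq X = C_1 \cup C_2$ cannot then leave and re-enter that neighbourhood infinitely often; hence either $y \in \B(C_1)$ or the orbit eventually stays outside the $C_1$-neighbourhood, so that $\omega(y) \subseteq C_2$ and $y \in \B(C_2)$. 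Up to a Lebesgue-null set this yields
\[
\B(X) \cap B_\varepsilon(x) = \bigl(\B(C_1) \cup \B(C_2)\bigr) \cap B_\varepsilon(x),
\]
and consequently, writing $\Sigma^k_\varepsilon(x)$ for the analogue of $\Sigma_\varepsilon(x)$ with $\B(X)$ replaced by $\B(C_k)$, one has $\Sigma^n_\varepsilon(x) \leq \Sigma^1_\varepsilon(x) + \Sigma^2_\varepsilon(x)$.

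Under $\sigma_{ij}, \tilde\sigma_{ij} \leq 0$, both $\Sigma^1_\varepsilon(x)$ and $\Sigma^2_\varepsilon(x)$ are bounded away from $1$ as $\varepsilon \to 0$, and in the generic strict case they decay at a polynomial rate controlled by the negative of the corresponding cycle index. Feeding the inequality $\Sigma^n_\varepsilon \leq \Sigma^1_\varepsilon + \Sigma^2_\varepsilon$ into the definitions of $\sigma^n_-$ and $\sigma^n_+$, and using the elementary bound $\ln(a+b) \leq \ln 2 + \max\{\ln a, \ln b\}$, produces $\sigma^n_+(x) = 0$ together with a lower bound on $\sigma^n_-(x)$ by the minimum of the corresponding negative-part indices for the two cycles, giving $\sigma^n_{ij} = \sigma^n_+(x) - \sigma^n_-(x) \leq 0$, as desired.

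The main obstacle is the basin-decomposition step: rigorously ruling out a positive-measure set of orbits that asymptotically oscillate between $C_1$ and $C_2$. The contracting-return-map hypothesis is exactly what excludes this, but the argument must be carried out on local cross-sections using the polynomial basin geometry of simple cycles (the ``tongue'' description underlying Lemma~\ref{index_calculation}). A secondary subtlety is that $\sigma_{ij} = 0$ does not force $\Sigma^1_\varepsilon \to 0$, so in the degenerate case the comparison of indices must be done through the $\liminf$/$\limsup$ in the definition rather than by any assumed asymptotic power law.
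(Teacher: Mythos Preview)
Your ``if'' direction and the basin decomposition are both correct. The paper obtains the decomposition by classifying $\omega$-limits directly: for $x$ with $\omega(x)\subset X$, connectedness of $\omega(x)$ forces it to be an equilibrium (a stable manifold, hence a null set), one of the cycles, or all of $X$; the contraction hypothesis on one cycle rules out the last option, since a trajectory cannot pass through a contracting return map infinitely often and still visit the other cycle infinitely often. This is the same content as your cross-section argument, stated more cleanly and without the tubular-neighbourhood apparatus.

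The genuine gap is in your final inequality step. The bound $\ln(a+b)\leq \ln 2+\max\{\ln a,\ln b\}$ applied to $\Sigma^n_\varepsilon\leq\Sigma^1_\varepsilon+\Sigma^2_\varepsilon$ controls $\sigma^n_-$, not $\sigma^n_+$. To conclude $\sigma^n_+(x)=0$ you would need a \emph{lower} bound on $1-\Sigma^n_\varepsilon$, and all you have is $1-\Sigma^n_\varepsilon\geq 1-\Sigma^1_\varepsilon-\Sigma^2_\varepsilon$, which is useless once $\Sigma^1_\varepsilon+\Sigma^2_\varepsilon$ approaches $1$. As a toy illustration, $\Sigma^1_\varepsilon=\Sigma^2_\varepsilon=\tfrac{1}{2}(1-\varepsilon)$ gives both cycle indices equal to zero while $\Sigma^n_\varepsilon=1-\varepsilon$ yields $\sigma^n_+=1>0$; so the contrapositive, as you have set it up, cannot be closed by this inequality alone. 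The paper avoids this by arguing directly from $\sigma^n_{ij}>0$: since $\B(X)=\B(C_1)\cup\B(C_2)$ up to a null set, one of the two cycle basins must itself occupy an asymptotically full portion of small balls, forcing its index to be positive. The direct route pairs the polynomial decay of $1-\Sigma^n_\varepsilon$ with the polynomial decay of one $\Sigma^k_\varepsilon$, whereas your contrapositive tries to combine two \emph{non}-decays, and that is precisely where the information is lost.
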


\begin{proof}
That $\sigma_{ij}>0 \Rightarrow \sigma^n_{ij}>0$ and $\tilde{\sigma}_{ij}>0 \Rightarrow \sigma^n_{ij}>0$ is Lemma \ref{index_cycle-network}.

Assume $\sigma^n_{ij}>0$. For a point $x$, that contributes to the index $\sigma^n_{ij}$, we have $\omega(x) \subset X$, so $\omega(x)$ is compact, non-empty and connected, leaving three possibilities:
\begin{compactitem}
\item[(a)] $\omega(x)$ is an equilibrium.
\item[(b)] $\omega(x)$ is one of the cycles.
\item[(c)] $\omega(x)$ is the whole network $X$.
\end{compactitem}    
The set of points for which (a) holds is the union of the stable manifolds of the equilibria and thus of measure zero. Case (c) does not occur: the trajectory through $x$ would have to follow around both cycles infinitely many times, which is impossible since for at least one of the cycles the return maps are contractions. So almost all $x$ with $\omega(x) \subset X$  fall into case (b). Therefore, one of the cycles has a large enough basin of attraction to make the index with respect to only this cycle positive.
\end{proof}

It will be shown in case (ii) of Proposition \ref{KS_lemma3}  and Lemma \ref{KS_lemma3-network} that the local stability index $\sigma^n_{ij}$ may be positive even though both local $c$-indices $\sigma_{ij}$ or $\tilde{\sigma}_{ij}$ are negative.

\section{Simple networks in $\R^4$} \label{simple_networks}

This section is concerned with the construction of all possible simple networks, involving cycles of types $B$ and/or $C$, with at least one common connecting trajectory in $\R^4$. The demand of one common connecting trajectory excludes networks made of cycles with only equilibria in common. These will be studied elsewhere.

Because we want to concentrate on the dynamics associated with the network, we consider only dynamical systems for which there are no critical elements other than the origin and the equilibria in the network.

Given the definition of cycles of types A, B and C it is easily seen that, in a $4$-dimensional space, a cycle of type A and a cycle of type B or C cannot exist simultaneously in the same network. This is due to the fact that when a cycle is of type A, no element of the symmetry group acts as a reflection on $\R^4$ (see Corollary 3.5 in \cite{KrupaMelbourne2004}), whereas for B and C cycles there are always reflections. 

Type B cycles can be put together to form a heteroclinic network in two ways: the network studied by Kirk and Silber \cite{KS} made of two $B_3^-$-cycles and the one we construct in appendix A made of two $B_2^+$-cycles. There is only one way to combine cycles of different types, $B$ and $C$, to form a network, as the following proposition shows. 

\begin{proposition}
Let $X$ be a simple heteroclinic network in $\R^4$ that consists of two non-homoclinic cycles of type $B$ or $C$, which have at least one connecting trajectory in common. Suppose that there are no critical elements other than the origin and the equilibria belonging to the cycles.
Then the only possible networks are of type $(B_2^+,B_2^+)$, $(B_3^-,B_3^-)$ and $(B_3^-,C_4^-)$.
\end{proposition}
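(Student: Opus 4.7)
The plan is exhaustive case analysis, starting from the Krupa--Melbourne classification of simple cycles in $\R^4$. That classification yields a short catalogue of candidate cycles: for type $B$ one has $B_m^\pm$ for small $m$, and for type $C$ only a handful of $C_k^\pm$ are realisable in $\R^4$. I would begin by listing these candidates together with their isotropy data (equilibria, the 2D subspaces $P_j$ containing the connections, the reflections generating the relevant $\Sigma_j$, and whether $-I \in \Gamma$).

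The second step is to observe that the presence of $-I$ in $\Gamma$ is a global property of the ambient equivariant vector field. Hence any two cycles embedded in the same network must agree on the "$\pm$" designation; this immediately rules out all mixed-sign pairs. The third step exploits the common connection $[\xi_i \to \xi_{i+1}]$: it lies in a 2D fixed-point subspace $P$ that must appear as some $P_j$ in one cycle and as some $P_{j'}$ in the other. The isotropy subgroups $\Sigma(\xi_i)$ and $\Sigma(\xi_{i+1})$ together with their action on the two directions transverse to $P$ are therefore forced to contain enough reflections to produce two distinct "continuation planes" $P_{j+1}$ and $P_{j'+1}$ at $\xi_{i+1}$, each supporting a saddle-sink connection to a different next equilibrium. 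In $\R^4$, where only four real dimensions are available and the $\Sigma_j$ are severely constrained by simplicity (condition (i) of the definition), the resulting isotropy lattice is extremely rigid, and one can enumerate its possibilities directly.

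The final step is to eliminate configurations that violate the standing hypotheses: those producing extra critical elements forced by the $\Gamma$-action on some fixed-point subspace, and those yielding homoclinic rather than heteroclinic cycles. Running through each surviving pair from the catalogue against these constraints should leave exactly the three combinations $(B_2^+, B_2^+)$, $(B_3^-, B_3^-)$, and $(B_3^-, C_4^-)$: in particular $C_4^-$ emerges as the only $C$-cycle whose fixed-point geometry at one of its nodes matches the continuation plane of a $B_3^-$ cycle.

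The main obstacle is the symmetry bookkeeping rather than any conceptual difficulty. One must carefully track fixed-point subspaces along both cycles, identify exactly which subgroups of $O(4)$ can realise the required simultaneous isotropy pattern at the shared nodes, and verify in each case that no additional zeros of $f$ are forced in any invariant 2- or 3-plane. With the classification of simple cycles in hand this is a finite check, but the cases where one might be tempted to write down a "mixed" network (for instance $B_2^+$ paired with some $C$-cycle) require genuine verification that the corresponding equivariant structure either fails to exist or forces extra critical elements.
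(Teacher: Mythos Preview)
Your outline is in the right spirit but misses the two concrete mechanisms that actually do the work in the paper's proof, and your step~2 is too weak to carry the reduction.

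First, matching the $\pm$ designation is not enough. The paper observes that the four non-homoclinic simple $B$/$C$ cycles in $\R^4$ require \emph{specific} symmetry groups: $B_2^+$ needs $\Z_2^3$, $B_3^-$ and $C_4^-$ need $\Z_2^4$, and $C_2^-$ needs $\Z_2 \ltimes \Z_2^4$. Since both cycles in a network live in the same equivariant system, they must share the same $\Gamma$, which immediately cuts the list of candidate pairs down to $(B_2^+,B_2^+)$, $(B_3^-,B_3^-)$, $(C_4^-,C_4^-)$, $(C_2^-,C_2^-)$, and $(B_3^-,C_4^-)$. Your $\pm$ criterion alone does not exclude, for instance, $(B_3^-,C_2^-)$, so you would be left with more cases and would need your vaguer ``isotropy bookkeeping at the shared connection'' to do extra work that the paper avoids entirely.

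Second, and more importantly, your plan does not identify the tool that kills the $(C_4^-,C_4^-)$ and $(C_2^-,C_2^-)$ cases. This is not a symmetry-forced-zero argument of the kind you sketch in your final step. The paper instead lays out, case by case, the phase portrait that a second $C$-cycle would impose in one of the invariant coordinate planes $P_{ij}$, and then applies the Poincar\'e--Bendixson theorem inside that $2$-plane to show that the required configuration of connections forces either a two-dimensional family of heteroclinic trajectories (contradicting simplicity) or an additional equilibrium or periodic orbit (contradicting the hypothesis on critical elements). Without this planar-dynamics argument your elimination step has no engine, and nothing in your isotropy-lattice analysis supplies one. The construction of the $(B_3^-,C_4^-)$ network is then a short explicit check that an extra connection $\xi_3 \to \xi_1$ can be inserted in the one coordinate plane not already constrained by the $C_4^-$ cycle.
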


\begin{proof}
According to \cite[section 3.2]{KrupaMelbourne2004}, there are four distinct non-homoclinic type B and C cycles and they can exist only under equivariance of the system $\dot y=f(y)$ with respect to the following symmetry groups:
\begin{align*}
B_2^+ (\Z_2^3), \quad B_3^- (\Z_2^4), \quad C_2^- (\Z_2 \ltimes \Z_2^4), \quad C_4^- (\Z_2^4)
\end{align*}
Therefore, the only cycles of different type that may exist simultaneously are $B_3^-$ and $C_4^-$, under equivariance with respect to $\Z_2^4$.

The proof consists of two steps. In step $1$, we exclude the existence of some combinations of the above cycles. In step $2$, we show how we can combine cycles of types $B_3^-$ and $C_4^-$ to produce a network.

\paragraph{Step $1$:} We show that the combinations $(C_2^-,C_2^-)$ and $(C_4^-,C_4^-)$ are not possible, starting with $(C_4^-,C_4^-)$. Suppose we have a system with a $C_4^-$-cycle joining equilibria $\xi_1 \to \xi_2 \to \xi_3 \to \xi_4 \to \xi_1$, which, without loss of generality, lie on the respective coordinate axes. Now, also without loss of generality, there are three possibilities to introduce a second $C_4^-$-cycle with at least one common connecting trajectory:
\vspace{.05cm}
\begin{compactitem}
\item [(a)] add a connection $\xi_4 \to \xi_1$;
\item [(b)] add an equilibrium $\xi_*$ and connections $\xi_3 \to \xi_* \to \xi_1$;
\item [(c)] add two equilibria $\xi_*, \xi_{**}$ and connections $\xi_2 \to \xi_* \to \xi_{**} \to \xi_1$.
\end{compactitem}
\vspace{.3cm}
In case (a), the new connection $\xi_4 \to \xi_1$ has to lie in the coordinate plane $P_{14}$, so the phase portrait in this plane looks like that in Figure \ref{case-a}. Applying the Poincar\'e-Bendixson Theorem within the invariant plane $P_{14}$, one of the following holds:
\vspace{.05cm}
\begin{compactitem}
\item [(i)] $\xi_4$ is connected to $\xi_1$ by a two-dimensional set of trajectories.
\item [(ii)] There exists another equilibrium or periodic orbit inside $P_{14}$.
\end{compactitem}
\vspace{.3cm}
\begin{figure}[!htb]
\centerline{
\includegraphics[width=0.4\textwidth]{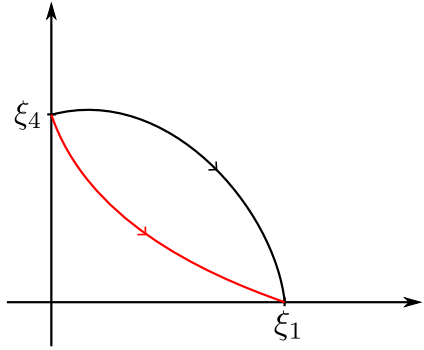}}
\caption{Phase portrait in $P_{14}$ for case (a).}
\label{case-a}
\end{figure}

Case (i) does not occur since, for a simple robust cycle the connection is of saddle-sink type and therefore $1$-dimensional in $P_{14}$. Case (ii) is excluded by our assumption. Hence, a connection added as in case (a) is not possible.

In case (b), the new equilibrium $\xi_*$ must lie in a one-dimensional fixed-point subspace. The only such subspaces are the coordinate axes. So $\xi_*$ must lie on the $y_4$-axis, since a $C_4^-$-cycle is not contained in a three-dimensional subspace. Thus, the phase portrait in $P_{14}$ looks like that in Figure \ref{case-b}. By a Poincar\'e-Bendixson argument similar to the one above, case (b) is impossible as well.

\begin{figure}[!htb]
\centerline{
\includegraphics[width=0.4\textwidth]{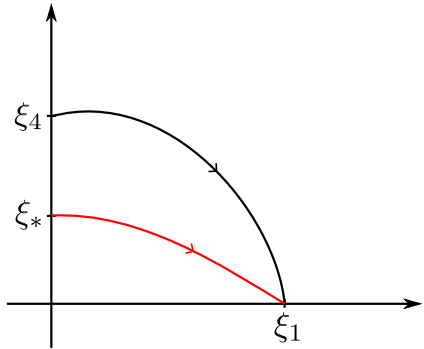}}
\caption{Phase portrait in $P_{14}$ for case (b).}
\label{case-b}
\end{figure}

In case (c), there are two subcases: (c-i) $\xi_*$ lies on the $y_3$-axis and $\xi_{**}$ on the $y_4$-axis. (c-ii) $\xi_*$ lies on the $y_4$-axis and $\xi_{**}$ on the $y_3$-axis. For (c-i), the phase portrait in $P_{14}$ looks exactly like the one in case (b), replacing $\xi_*$ with $\xi_{**}$. For (c-ii), dynamics in $P_{34}$ are shown in Figure \ref{case-c}. Again, a Poincar\'e-Bendixson argument yields that case (c) is not possible.

\begin{figure}[!htb]
\centerline{
\includegraphics[width=0.4\textwidth]{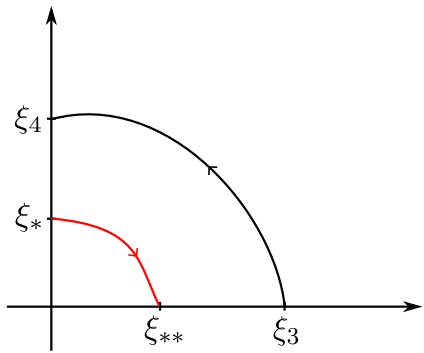}}
\caption{Phase portrait $P_{34}$ for case (c-ii).}
\label{case-c}
\end{figure}

The reasoning for a $(C_2^-,C_2^-)$-network is similar. A single cycle of type $C_2^-$ occupies the whole of $\R^4$ in the same way that a $C_4^-$-cycle does. There are also four equilibria, only now there are two pairs which are related by symmetry. Thus, analogous to the above, it follows that no additional $C_2^-$-cycle can be introduced to the system through adding connections and/or equilibria.

\paragraph{Step $2$:}  A $(B_3^-,C_4^-)$-network may be put together in the following way. Suppose we have a system $\dot y=f(y)$, equivariant under the action of $\Z_2^4$, with a heteroclinic cycle of type $C_4^-$. As above we assume it consists of four equilibria $\xi_i$ on the $x_i$-axis, joined by connecting trajectories in the coordinate planes in the following way: $\xi_1 \to \xi_2 \to \xi_3 \to \xi_4 \to \xi_1$. It is impossible to introduce a $B_3^-$-cycle to this system by adding an equilibrium (and two connections), for the same reasons as before. However, the existence of the $C_4^-$-cycle places no a priori restrictions on the dynamics in $P_{13}$. So we may consider the case, where there is a connection $\xi_3 \to \xi_1$ within $P_{13}$, making $\xi_1$ a sink in the three-dimensional coordinate space $S_{134}$ and $\xi_3$ expanding in the $x_1$- and $x_4$-directions. Then we have a second cycle $\xi_1 \to \xi_2 \to \xi_3 \to \xi_1$, contained in the three-dimensional fixed-point subspace $S_{123}$ and thus of type $B_3^-$. It has two connections in common with the $C_4^-$-cycle.
\end{proof}

We concentrate on the study of the $(B_3^-,B_3^-)$-network extending the work of \cite{KS}. The study of the $(B_2^+,B_2^+)$-network, because it follows closely that of the $(B_3^-,B_3^-)$-network, appears in appendix A.

\section{Stability}\label{stability_cycles}

In this section we make extensive use of the results that Podvigina and Ashwin \cite{PodviginaAshwin2011} obtained in subsection 4.2.1.\ of their paper concerning how to calculate the local stability indices for cycles of types $B_2^+$ and $B_3^-$. We transcribe their results, for ease of reference, in the following two lemmas. As noted in Section \ref{preliminaries}, we drop the subscript {\em loc}.
We recall, also from \cite{PodviginaAshwin2011}, that near $\xi_j$ the linearization of the vector field is given by
\begin{align}\label{linearization}
\begin{cases}
\dot{u}  &=  -r_j u\\
\dot{v}  &=  -c_j v\\
\dot{w}  &=  e_j w\\
\dot{z}  &=  t_j z,
\end{cases}
\end{align}
where $r_j$, $c_j$ and $e_j$ are positive but $t_j$ can have either sign. These eigenvalues are called radial, contracting, expanding and transverse, respectively.
The eigenvalues $-c_j$, $e_j$ and $t_j$ give rise to the quantities $a_j = c_j/e_j$ and $b_j = -t_j/e_j$ which will be used next. The function $f^\ind$ is used to express some of the indices, it can be found in \cite{PodviginaAshwin2011}, p. 905.

\begin{lemma}[Stability indices for type $B_2^+$ (\cite{PodviginaAshwin2011}, p.\  906)] \label{P&A_B2}
For a cycle of type $B_2^+$, the stability indices along connecting trajectories are as follows:
\begin{enumerate}
	\item[(i)]  If $b_1< 0$ and $b_2 < 0$, then the cycle is not an attractor and all stability indices are $-\infty$.
	\item[(ii)]  Suppose $b_1> 0$ and $b_2 > 0$.
	\begin{enumerate}
		\item[(a)]  If $a_1a_2 < 1$, then the cycle is not an attractor and all indices are $-\infty$.
		\item[(b)]  If $a_1a_2 > 1$, then the cycle is locally attracting and the stability indices are $+\infty$.
	\end{enumerate}
	\item[(iii)]  Suppose $b_1< 0$ and $b_2 > 0$.
	\begin{enumerate}
		\item[(a)]  If $a_1a_2 < 1$ or $b_1a_2+b_2 < 0$, then the cycle is not an attractor and all indices are $-\infty$.
		\item[(b)]  If $a_1a_2 > 1$ and $b_1a_2+b_2 > 0$,  then the stability indices are $\sigma_1 = f^{\textnormal{index}}(b_1,1)$ and $\sigma_2 = +\infty$.
	\end{enumerate}
\end{enumerate}
\end{lemma}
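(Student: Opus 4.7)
The plan is to exploit that $\sigma(x)$ is constant along trajectories, so it suffices to compute one index per connection $[\xi_j \to \xi_{j+1}]$, which by Theorem 2.4 of \cite{PodviginaAshwin2011} can be done on a transverse $2$-section where only the contracting and transverse directions matter. Near each $\xi_j$ I would introduce local coordinates $(u,v,w,z)$ diagonalising the linearization \eqref{linearization} and use incoming and outgoing sections $H^{\inn}_j=\{w=h\}$ and $H^{\out}_j=\{v=h\}$ for small $h>0$. Integrating \eqref{linearization} yields a local map $\phi^{\textnormal{loc}}_j:H^{\inn}_j\to H^{\out}_j$ whose leading-order scalings in the transverse $(v,z)$-plane are $v\mapsto \text{const}\cdot v^{a_j}$ and $z\mapsto \text{const}\cdot z\, v^{b_j}$. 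The global transition $H^{\out}_j\to H^{\inn}_{j+1}$ is, to leading order, a linear identification fixed by the symmetry forcing the connection, so the full return map is an explicit composition, and a nearby point converges to the cycle iff its transverse component stays bounded under iteration.

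Cases (i) and (ii) follow at once from the classical Krupa--Melbourne criterion \cite{KrupaMelbourne95a} applied to the invariant $3$-subspace, combined with the sign of the transverse eigenvalues. If $b_1,b_2<0$ the transverse direction expands at both nodes, so almost every nearby point escapes a tubular neighbourhood and $\sigma_j=-\infty$. If $b_1,b_2>0$ the transverse direction contracts at both nodes, so the cycle is an attractor exactly when $a_1a_2>1$, giving all indices equal to $+\infty$; if instead $a_1a_2<1$ almost no nearby points converge and the indices are $-\infty$.

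The substantive case is (iii). With $b_1<0$ and $b_2>0$, only trajectories whose $z$-component is small enough at $\xi_1$ survive the passage to $\xi_2$, where the transverse direction then contracts. Composing the two local maps with the global transitions and projecting onto a transverse $2$-section at a point $x_1\in[\xi_1\to\xi_2]$, I expect the set of initial conditions that complete at least one return to lie in a cusp of the form $\{|z|\le K w^{\alpha}\}$ for a suitable exponent $\alpha$ that is an explicit rational function of $a_1,a_2,b_1,b_2$, with $\alpha>1$ under the assumptions $a_1a_2>1$ and $b_1a_2+b_2>0$ and $\alpha\le 0$ (or cusp empty) otherwise. Thus these two inequalities are precisely the conditions for the cusp to be non-empty and to iterate inside itself, so that the local basin contains such a region. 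Feeding this into Lemma \ref{index_calculation} produces $\sigma_1=f^{\ind}(b_1,1)$, while at a point $x_2\in[\xi_2\to\xi_1]$ the absence of any transverse constraint (the $b_2>0$ eigenvalue contracts everything transverse) means the basin locally fills a full neighbourhood, giving $\sigma_2=+\infty$.

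The main obstacle will be case (iii): accurately tracking the transverse coordinate through an entire return so that the cusp geometry, the precise sign conditions $a_1a_2>1$ and $b_1a_2+b_2>0$, and the exponent $f^{\ind}(b_1,1)$ all emerge correctly, rather than merely qualitative positivity or negativity of the index. Once this is in hand, the rest reduces to routine integration of \eqref{linearization} and an application of Lemma \ref{index_calculation}.
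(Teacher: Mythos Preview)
The paper does not prove this lemma. It is explicitly introduced as a transcription from \cite{PodviginaAshwin2011} (``We transcribe their results, for ease of reference, in the following two lemmas''), so there is no argument in the present paper against which to compare your proposal. Your outline is nonetheless in the same spirit as the techniques the paper uses elsewhere (local maps from the linearised flow, cusp-shaped domains of return maps, and Lemma~\ref{index_calculation} to read off indices), and is essentially a sketch of the computation carried out in the original reference.

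One concrete point to correct in case (iii)(b): you have the connections for $\sigma_1$ and $\sigma_2$ swapped. In the Podvigina--Ashwin convention used here (visible from Table~1 and the proof of Proposition~\ref{KS_fig5}, where $\sigma_1=\tilde\sigma_{12}$ with $\tilde b_1$ computed at the \emph{arrival} node $\xi_2$), the index $\sigma_j$ is attached to the connection \emph{arriving} at $\xi_j$. Thus $\sigma_1$ is along $[\xi_2\to\xi_1]$: the cross-section there feeds directly into the local map at $\xi_1$, whose domain is the cusp $|z|<w^{-b_1}$, yielding $\sigma_1=f^{\ind}(b_1,1)$. By contrast $\sigma_2$ is along $[\xi_1\to\xi_2]$: the first passage is through $\xi_2$ (no restriction since $b_2>0$), and pulling the restriction at $\xi_1$ back through $\xi_2$ gives $|z|<w^{-(b_1a_2+b_2)}$, which is vacuous for small $w$ precisely because $b_1a_2+b_2>0$; hence $\sigma_2=+\infty$. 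Your heuristic ``$b_2>0$ contracts everything transverse'' is therefore attached to the wrong connection, and the reason the basin is locally full is the sign of $b_1a_2+b_2$, not merely that of $b_2$.
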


\begin{lemma}[Stability indices for type $B_3^-$ (\cite{PodviginaAshwin2011}, pp.\  906--907)] \label{P&A_B3}
For a cycle of type $B_3^-$, the stability indices along connecting trajectories are as follows:
\begin{enumerate}
	\item[(i)]  If $b_1< 0$,  $b_2 < 0$ and $b_3 < 0$, then the cycle is not an attractor and all stability indices are $-\infty$.
	\item[(ii)]  Suppose $b_1> 0$, $b_2 > 0$ and $b_3 > 0$.
	\begin{enumerate}
		\item[(a)]  If $a_1a_2a_3 < 1$, then the cycle is not an attractor and all indices are $-\infty$.
		\item[(b)]  If $a_1a_2a_3 > 1$, then the cycle is locally attracting and the stability indices are $+\infty$.
	\end{enumerate}
	\item[(iii)]  Suppose $b_1< 0$, $b_2 > 0$ and $b_3 > 0$.
	\begin{enumerate}
		\item[(a)]  If $a_1a_2a_3 < 1$ or $b_1a_2a_3 + b_3a_2 + b_2 < 0$, then the cycle is not an attractor and all indices are $-\infty$.
		\item[(b)]  If $a_1a_2a_3 > 1$ and  $b_1a_2a_3 + b_3a_2 + b_2 > 0$, then the stability indices are $\sigma_1 = f^{\textnormal{index}}(b_1,1)$,  $\sigma_2 = +\infty$ and $\sigma_3 = f^{\textnormal{index}}(b_3 + b_1a_3,1)$.
	\end{enumerate}
	\item[(iv)]  Suppose $b_1< 0$, $b_2< 0$ and $b_3 > 0$.
	\begin{enumerate}
		\item[(a)]  If $a_1a_2a_3 < 1$ or $b_2a_1a_3 + b_1a_3 + b_3 < 0$ or  $b_1a_2a_3 + b_3a_2 + b_2 < 0$, then the cycle is not an attractor and all indices are $-\infty$.
		\item[(b)]  If $a_1a_2a_3 > 1$ and $b_2a_1a_3 + b_1a_3 + b_3 > 0$ and  $b_1a_2a_3 + b_3a_2 + b_2 > 0$, then the stability indices are $\sigma_1 = \min\{f^{\textnormal{index}}(b_1,1),f^{\textnormal{index}}(b_1+b_2a_1,1)\}$,  $\sigma_2 = f^{\textnormal{index}}(b_2,1)$ and $\sigma_3 = +\infty$.
	\end{enumerate}
\end{enumerate}	
\end{lemma}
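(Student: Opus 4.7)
The plan is to follow the return-map strategy of Podvigina--Ashwin, reducing the computation of each stability index to an application of Lemma \ref{index_calculation} after enumerating the ``escape cusps'' in a transverse section.

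First I would fix, for each equilibrium $\xi_j$ of the $B_3^-$-cycle, two local cross-sections $H_j^{\inn}$ and $H_j^{\out}$ to the incoming and outgoing connecting trajectories, chosen inside the domain where the flow is well approximated by the linearization \eqref{linearization}. Integrating \eqref{linearization} gives the local map $M_j : H_j^{\inn} \to H_j^{\out}$ as an explicit power law: in the coordinates $(v,w,z)$ on $H_j^{\inn}$ the image has $v$-component of order $v^{a_j}$ and $z$-component of order $z \, v^{b_j}$, with a linear factor in $w$. Composing with the global maps $G_j : H_j^{\out} \to H_{j+1}^{\inn}$ (which, up to the $B_3^-$-symmetry, simply permute and rescale coordinates) gives a return map $\Pi$ whose $n$-th iterate has transverse coordinate proportional to a product of the form $z \prod_{j} v^{b_{k_j}}$ with $v$-coordinate dictated by a corresponding product of $a_j$'s. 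Because $\sigma(\cdot)$ is constant along trajectories (Theorem 2.2 of \cite{PodviginaAshwin2011}), it suffices to compute it at a single point on each connection via such a section.

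Next I would split into the four cases by the signs of the $b_j$. In case (i) every transverse direction is actively repelling, so any point off the stable manifolds leaves a small neighbourhood, forcing $\sigma_j = -\infty$ throughout. In case (ii) only the contracting/expanding directions matter, and the standard Krupa--Melbourne contraction argument (the return map on the stable set is essentially the multiplication by $a_1 a_2 a_3$ in log-coordinates) gives asymptotic stability iff $a_1 a_2 a_3 > 1$, hence $\sigma_j = \pm\infty$. In cases (iii) and (iv) I would identify, for each equilibrium with $b_j < 0$ and each iterate $n \ge 0$ of $\Pi$, the corresponding cusp $\E_{j,n} \subset H_1^{\inn}$ consisting of points whose $n$-th return first exceeds the threshold in the transverse coordinate at $\xi_j$. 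A straightforward bookkeeping of eigenvalue products shows that each $\E_{j,n}$ has precisely the shape required by Lemma \ref{index_calculation}, with exponents of the form $\alpha_{j,n} = (a_{j_1}\cdots a_{j_k})^{-1}\bigl(b_{j} + b_{j-1} a_{j-1} + \dots\bigr)$; the positivity hypotheses $b_1 a_2 a_3 + b_3 a_2 + b_2 > 0$ etc.\ are exactly what is needed so that successive cusps shrink (i.e.\ $\alpha_{j,n}$ stays bounded away from $1$ and has a well-defined minimum over $n$), which is the hypothesis of Lemma \ref{index_calculation}.

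Finally, plugging the exponents $\alpha_{j,n}$ into the formula of Lemma \ref{index_calculation} yields $\sigma_j$. In the $\sigma_1$ slot of case (iv) one sees two distinct families of cusps — those arising from escape at $\xi_1$ itself and those arising from escape at $\xi_2$ one step later — and the $\min\{f^{\ind}(b_1,1),f^{\ind}(b_1+b_2 a_1, 1)\}$ appears as the minimum of the two corresponding expressions coming out of Lemma \ref{index_calculation}. The main obstacle is the combinatorics of case (iv): one has to verify that no additional cusp families contribute (the escape cusps from $\xi_2$ at later returns are dominated by the earlier ones thanks to $a_1 a_2 a_3 > 1$ and the two sign conditions on $b_1 a_2 a_3 + b_3 a_2 + b_2$ and $b_2 a_1 a_3 + b_1 a_3 + b_3$), and that the positivity hypotheses in (a) versus (b) separate precisely into ``cycle not attracting'' versus ``cusps are genuine small sets'', so that the index is $-\infty$ in the former case and finite-positive in the latter.
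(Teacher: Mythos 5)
You should know that the paper does not prove this lemma at all: it is transcribed from Podvigina and Ashwin (subsection 4.2.1 of \cite{PodviginaAshwin2011}), and the only original content is the remark right after the statement justifying the replacement, in case (iv)(b), of $\sigma_3=f^{\ind}(b_3+b_1a_3,1)$ by $\sigma_3=+\infty$. Your outline reproduces, in broad strokes, the return-map/cusp method that Podvigina and Ashwin themselves use (and that this paper uses later for the $n$-indices), so the strategy is the right one; but as a proof it has two concrete gaps.

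First, in the (a)-subcases you infer that all indices are $-\infty$ from ``the cycle is not an attractor''. That implication is false in general: this very paper exhibits cycles that are not asymptotically stable yet have finite positive and even $+\infty$ indices (Proposition \ref{KS_fig5}). To get $-\infty$ you must actually estimate the local basin: when $a_1a_2a_3<1$, or when $b_1a_2a_3+b_3a_2+b_2<0$ (resp.\ $b_2a_1a_3+b_1a_3+b_3<0$), the exponents of the escape cusps decrease under iteration, the cusps eventually become thick and exhaust all of a small ball up to a set whose measure is smaller than any power of $\varepsilon$, and only then does $\sigma=-\infty$ follow. None of this bookkeeping appears in your sketch, and it is precisely where the sign conditions enter; they are not merely an attractor/non-attractor dichotomy. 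Second, your machinery as described would output $\sigma_3=f^{\ind}(b_3+b_1a_3,1)$ in case (iv)(b) --- the original Podvigina--Ashwin value --- whereas the statement you are proving asserts $\sigma_3=+\infty$. The missing (easy, but necessary) step is the one thing the paper does prove: since $b_2<0$, the hypothesis $b_2a_1a_3+b_1a_3+b_3>0$ forces $b_3+b_1a_3>-b_2a_1a_3>0$, and $f^{\ind}(\alpha,\beta)=+\infty$ whenever $\alpha,\beta>0$. Without that observation your case (iv) conclusion does not match the statement. The remaining ingredients (explicit local and global maps, the exponents $\alpha_{j,n}$, verification of the hypotheses of Lemma \ref{index_calculation}, and the min of the two cusp families feeding $\sigma_1$ in case (iv)) are asserted rather than derived, but they are standard and consistent with the source.
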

Note that compared to the statement in \cite{PodviginaAshwin2011}, in Lemma \ref{P&A_B3} (iv) (b) we have replaced $\sigma_3 = f^{\textnormal{index}}(b_3 + b_1a_3,1)$ by $\sigma_3=+\infty$. This is true since $b_2a_1a_3 + b_1a_3 + b_3 > 0$ implies $b_1a_3 + b_3 > -b_2a_1a_3 > 0$ and $f^{\ind}(\alpha,\beta)=+\infty$ for $\alpha,\beta>0$.

When the above results are applied to networks with a common connection, further simplifications arise as detailed next. Without loss of generality, we assume that the common connection is $[\xi_1 \rightarrow \xi_2]$ and that the stability indices for this connection depend on the values of $a_1$ and $b_1$. Name the cycles in the network $C_3$ and $C_4$.

\begin{lemma}
Let $X$ be a heteroclinic network made of two cycles of type $B_3^-$ and/or $C_4^-$, with one trajectory, $[\xi_1 \rightarrow \xi_2]$, common to both cycles. In the calculation of the stability indices, cases (i) and (ii) in Lemma \ref{P&A_B3} do not occur.
\end{lemma}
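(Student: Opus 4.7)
The plan is to show that $b_1>0$ and $b_2<0$ for the cycle in question, which immediately excludes the two cases.

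First I would recall the geometric content of the transverse eigenvalue. At each node $\xi_j$ of a simple cycle $C$ in $\R^4$, the linearization around $\xi_j$ decomposes $\R^4$ into four one-dimensional eigenspaces: the radial direction, the contracting direction (lying in $P_{j-1}$, the invariant plane carrying the incoming connection), the expanding direction (lying in $P_j$, carrying the outgoing connection), and the transverse direction (the unique remaining coordinate axis). Since $b_j=-t_j/e_j$ with $e_j>0$, we read off that $b_j<0$ means the transverse direction is unstable at $\xi_j$, while $b_j>0$ means it is stable.

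Next I would pin down the sign of $b_2$. The two cycles of the network are distinct and non-homoclinic, and by the preceding proposition the only admissible network types are $(B_2^+,B_2^+)$, $(B_3^-,B_3^-)$ and $(B_3^-,C_4^-)$. In each case, inspection of the coordinate placement of the equilibria shows that the outgoing connection of the second cycle from $\xi_2$ lies in a coordinate plane different from $P_2$. Robustness of this saddle-sink connection requires an unstable eigendirection of the linearization at $\xi_2$ outside $P_2$; the only candidate is the transverse direction, forcing $t_2>0$ and hence $b_2<0$. This already rules out case (ii) of Lemma \ref{P&A_B3}, which demands $b_2>0$.

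By the symmetric argument at $\xi_1$, the incoming connection of the second cycle into $\xi_1$ lies in a coordinate plane different from the plane $P_{j-1}$ at $\xi_1$ that carries the incoming connection of $C$. Otherwise the two cycles would agree on a longer shared subsegment, and a case check against the admissible list shows this forces them to coincide, contradicting distinctness. Robustness of this incoming saddle-sink connection then demands a stable eigendirection at $\xi_1$ outside $P_{j-1}$, which can only be the transverse one, giving $t_1<0$ and $b_1>0$. This contradicts case (i), which demands $b_1<0$, so both cases (i) and (ii) are excluded.

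The main obstacle is the case-by-case verification in the last two steps that the second cycle's differing connection at $\xi_1$ (respectively $\xi_2$) really is routed through the transverse coordinate axis rather than some other pathological direction. This is really just careful bookkeeping against the explicit coordinate placement of equilibria provided by the preceding proposition, and not a conceptual obstacle; everything else follows directly from the sign conventions and the definition of $b_j$.
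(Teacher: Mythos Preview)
Your argument is correct and takes essentially the same route as the paper: at $\xi_2$ the transverse direction for one cycle is the expanding direction for the other (giving a negative $b$, ruling out case~(ii)), and at $\xi_1$ it is the contracting direction for the other (giving a positive $b$, ruling out case~(i)). Two minor points: your indexing convention ($b_j$ at $\xi_j$) differs from the one the paper adopts in its proof and in Table~1 (where $b_1$ sits at $\xi_2$ and $b_3$ at $\xi_1$), which is harmless here since cases (i) and (ii) are symmetric in the labels but would clash with later computations; and your ``case check against the admissible list'' at $\xi_1$ is unnecessary, since the hypothesis of a single common trajectory already forces the incoming connections at $\xi_1$ to lie in distinct planes --- the paper simply reads off the identification of transverse with contracting directly.
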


\begin{proof}
We start by showing that at least one of the quantities $b_i$ is negative for both cycles thus excluding (ii). Without loss of generality, let $b_1$ be the symmetric of the quotient between the transverse and expanding eigenvalues at the common node $\xi_2$. At this node, there exist one radial eigenvalue, one contracting eigenvalue (negative), an expanding eigenvalue (positive) and a transverse eigenvalue, which must be positive for both cycles. In fact, the expanding eigenvalue for cycle $C_3$ is the transverse eigenvalue for cycle $C_4$ and vice-versa. Therefore $b_1<0$.

It remains to show that (i) does not take place. Let, again without loss of generality, $b_3$ be the symmetric of the quotient between the transverse and expanding eigenvalues at the common node $\xi_1$. In this case, the transverse eigenvalue for cycle $C_3$ is the contracting eigenvalue for cycle $C_4$ and vice-versa. Therefore, $b_3>0$.
\end{proof}

Whether (iii) or (iv) occur in Lemma \ref{P&A_B3} depends on whether the transverse eigenvalue at the non-common node is negative or positive, respectively.

\begin{lemma}
Let $X$ be a heteroclinic network made of two cycles of type $B_2^+$ with one trajectory, $[\xi_a \rightarrow \xi_b]$, common to both cycles. In the calculation of the stability indices, cases (i) and (ii) in Lemma \ref{P&A_B2} do not occur. Furthermore, for one of the cycles the calculations fall in case (iii)(a) of Lemma \ref{P&A_B2}.
\end{lemma}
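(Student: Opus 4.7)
The strategy follows the $B_3^-$-analogue just proved: I would compare eigenvalues of the two cycles at the common source $\xi_a$ and the common target $\xi_b$ of $[\xi_a\to\xi_b]$, call the two cycles $C_1,C_2$, and mark eigenvalues with a superscript indicating the cycle when there is ambiguity.

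I would first rule out cases (i) and (ii) by showing that for each cycle $C_k$ one has $b_b^{(k)}<0$ and $b_a^{(k)}>0$. At $\xi_b$ both cycles share the contracting direction (along $[\xi_a\to\xi_b]$) and the radial direction; by a dimension count in $\R^4$ the two remaining eigendirections must be split by each cycle into its own expanding/transverse pair, with the roles swapped between the cycles. Hence the transverse eigenvalue of $C_k$ at $\xi_b$ coincides with the expanding eigenvalue of $C_{k'}$ ($k'\neq k$), which is positive, giving $b_b^{(k)}<0$. At $\xi_a$ the expanding direction along $[\xi_a\to\xi_b]$ and the radial direction are common, and the same dimension count identifies the transverse direction of $C_k$ with the contracting direction of $C_{k'}$; since contracting eigenvalues are negative, $b_a^{(k)}>0$. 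Labelling $\xi_b$ as node $1$ and $\xi_a$ as node $2$, both cycles are placed in case (iii) of Lemma \ref{P&A_B2}.

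For the remaining claim I would use the identifications just obtained. They give $b_a^{(k)}=c_a^{(k')}/e_a=a_a^{(k')}$ and $b_b^{(k)}=-e_b^{(k')}/e_b^{(k)}$, so a short computation yields
\begin{align*}
(b_1a_2+b_2)^{(k)}=\frac{1}{e_a\,e_b^{(k)}}\left[c_a^{(k')}e_b^{(k)}-c_a^{(k)}e_b^{(k')}\right].
\end{align*}
The bracketed quantities for $k=1$ and $k=2$ are negatives of each other, so generically one of $(b_1a_2+b_2)^{(1)},(b_1a_2+b_2)^{(2)}$ is strictly negative. The corresponding cycle then satisfies the second disjunct of case (iii)(a) and the conclusion follows.

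The main obstacle is the geometric identification of the transverse eigendirection of each cycle with a suitable eigendirection of the other cycle at the common nodes, in particular the fact that at $\xi_a$ the $C_1$-transverse direction coincides (up to scalar) with the $C_2$-contracting direction and analogously at $\xi_b$. This rests on the explicit $\Z_2^3$-isotropy lattice of the $(B_2^+,B_2^+)$-network constructed in Appendix A, which decomposes the $\R^4$-tangent space at each common node into one-dimensional invariant subspaces matching the radial, contracting, expanding and transverse directions of each cycle; once this identification is established, the rest of the argument is eigenvalue bookkeeping.
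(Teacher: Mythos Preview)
Your proposal is correct and follows essentially the same approach as the paper: exclude cases (i) and (ii) by the eigenvalue-swapping argument at the two common nodes, label $\xi_b$ as node~1 and $\xi_a$ as node~2, and then compute $b_1a_2+b_2$ for each cycle to see that the two expressions differ only by a positive factor times a sign-reversed bracket, forcing one of them to be negative. Your closing caveat about the geometric identification is more cautious than necessary---the paper simply asserts the role-swap at each common node from the dimension count in $\R^4$, exactly as you do in your first paragraph, without invoking the explicit isotropy lattice.
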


\begin{proof}
In this case, there are only two nodes, both of which are common to both cycles. Similar arguments to the above exclude (i) and (ii).

We denote by $e_{a2}$, $-c_{a3}$, $-c_{a4}$ the non-radial eigenvalues at $\xi_a$ and by $e_{b3}$, $e_{b4}$, $-c_{b2}$ those at $\xi_b$, where all quantities are positive. Note that in case (iii) of Lemma \ref{P&A_B2} $\xi_a$ and $\xi_b$ now take the role of $\xi_2$ and $\xi_1$, respectively. Then we obtain for cycle $C_3$
$$
b_{1}a_{2}+b_{2} = \frac{c_{a4}}{e_{a2}}-\frac{e_{b4}c_{a3}}{e_{a2}e_{b3}},
$$
and for cycle $C_4$
$$
b_{1}a_{2}+b_{2} = \frac{c_{a3}}{e_{a2}}-\frac{e_{b3}c_{a4}}{e_{a2}e_{b4}}.
$$
It is clear that the above quantities have opposite signs, so (iii)(a) applies.
\end{proof}

The above results provide the necessary information for determining the stability indices of each cycle {\em per se}, that is, when the basin of attraction $\B(X)$ takes $X$ to be only the cycle which we refer to by $c$-indices. We are also interested in calculating the stability indices when the basin of attraction $\B(X)$ takes $X$ to be the whole network, that is, the $n$-indices. In order to determine $\sigma_{ij}^n$, we use Lemmas \ref{index_cycle-network} and \ref{index_calculation}. 

\medskip

When joining together two or more cycles in a heteroclinic network, stability may be gained by one or both of the cycles. The $n$-index of connections of each cycle provides information about the relative stability of the cycles in a network. Cycles with higher $n$-indices are more stable and hence, more likely to be observed in experiments or simulations.
We are particularly interested in illustrating how the association of two cycles into a network can affect their stability properties. When one cycle is {\em per se} p.a.s., its indices will remain positive with respect to the whole network. This case does not provide an insight into the different stability when a cycle is considered on its own and when it is seen as part of a network. Kirk and Silber \cite{KS} have illustrated this point in their Lemma $3$. There, one of the cycles in the network has all stability indices equal to $-\infty$, while the other is either p.a.s.\ (case (i)) or has all stability indices but one equal to $+\infty$ (case (ii)). See also Propositions \ref{KS_lemma3} and \ref{KS_lemma3-network} below. In case (ii), neither cycle is p.a.s.\ but the network is p.a.s.. This illustrates the existence of what we call a {\em stabilizing mechanism} or {\em effect} in joining two cycles in a network. 

Note that infinite values for the stability index denote extreme stability characteristics and they are a feature in the networks we consider. In fact, we have the following generic results which are direct observations of Lemmas \ref{P&A_B2} and \ref{P&A_B3}, since all three possible networks involve a cycle of type $B_2^+$ or $B_3^-$. Note that the values of $f^{\textnormal{index}}(\alpha,1)$ in the lemmas are finite if and only if $\alpha <0$.

\begin{lemma}\label{generic_plus_infinity}
For the three possible heteroclinic networks in $\R^4$, at least one connecting trajectory has stability index equal to $+\infty$, unless all indices are equal to $-\infty$.
\end{lemma}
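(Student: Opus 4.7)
The plan is to reduce the statement to a cycle-by-cycle inspection of Lemmas \ref{P&A_B2} and \ref{P&A_B3}. By the preceding Proposition, each of the three admissible networks contains at least one cycle of type $B_2^+$ or $B_3^-$; fix such a cycle $C$ in the network $X$. It suffices to produce a trajectory of $C$ whose $c$-index is $+\infty$ (unless all $c$-indices of $C$ are $-\infty$), because Lemma \ref{index_cycle-network} then gives the same bound for the corresponding $n$-index along that trajectory in $X$.

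The next step is to identify which cases of Lemmas \ref{P&A_B2} and \ref{P&A_B3} are available. The two lemmas immediately preceding this statement show that, since $C$ shares a connection with another non-homoclinic cycle of type $B$ or $C$, cases (i) and (ii) of Lemma \ref{P&A_B2} (for $C = B_2^+$) and of Lemma \ref{P&A_B3} (for $C = B_3^-$) cannot occur. Hence only case (iii) for $B_2^+$, and cases (iii) and (iv) for $B_3^-$, remain.

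In every (a) subcase the corresponding lemma states that all $c$-indices of $C$ equal $-\infty$, and we are in the excluded alternative of the statement. In each (b) subcase the formulae of the lemmas provide an index equal to $+\infty$: $\sigma_2 = +\infty$ in Lemma \ref{P&A_B2}(iii)(b), $\sigma_2 = +\infty$ in Lemma \ref{P&A_B3}(iii)(b), and $\sigma_3 = +\infty$ in Lemma \ref{P&A_B3}(iv)(b) (using the correction to \cite{PodviginaAshwin2011} recorded immediately after Lemma \ref{P&A_B3}, which follows from $b_2 a_1 a_3 + b_1 a_3 + b_3 > 0$ with $b_2 < 0$). Combining with Lemma \ref{index_cycle-network} yields the claim.

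There is no real obstacle: the argument is a direct case check once Proposition 3.1 narrows the admissible networks and the two auxiliary lemmas exclude cases (i) and (ii). The only point requiring care is the invocation of the corrected formula in Lemma \ref{P&A_B3}(iv)(b), where one must verify that the inequality $b_1 a_3 + b_3 > 0$ is forced by the hypotheses so that $f^{\ind}(b_3 + b_1 a_3, 1) = +\infty$.
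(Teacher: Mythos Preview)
Your argument is correct and follows the paper's own reasoning: a direct inspection of the (b) subcases in Lemmas \ref{P&A_B2} and \ref{P&A_B3} always produces a $+\infty$ index, while the (a) subcases give the $-\infty$ alternative. Two minor remarks. First, the exclusion of cases (i) and (ii) via the auxiliary lemmas is superfluous here, since those cases already satisfy the dichotomy (all indices $-\infty$ or all $+\infty$); the paper simply observes the lemma directly from the tables without excluding them. Second, the statement in the paper concerns $c$-indices (this is clear from its later use after Proposition \ref{KS_lemma3} and from the companion Lemma \ref{generic_minus_infinity}, which speaks of ``that cycle''), so the detour through Lemma \ref{index_cycle-network} to upgrade to $n$-indices is not needed; indeed, your reduction ``all $c$-indices of $C$ equal $-\infty$ $\Rightarrow$ excluded alternative'' would not match the $n$-index reading, since the other cycle could still contribute a positive $n$-index.
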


\begin{lemma}\label{generic_minus_infinity}
For the three possible heteroclinic networks in $\R^4$, if one trajectory has stability index equal to $-\infty$ then all connecting trajectories of that cycle have stability index equal to $-\infty$.
\end{lemma}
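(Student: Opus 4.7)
The proof will be essentially a direct inspection of the Podvigina--Ashwin lemmas. The key observation is that the lemmas are organized so that the ``$-\infty$ scenarios'' are precisely the non-attractor cases, in which all indices of the given cycle are $-\infty$ simultaneously; the ``attractor or partially attracting'' cases produce values that are either $+\infty$ or finite real numbers, and crucially \emph{never} $-\infty$.

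My plan is as follows. First, I would note that in each of the three possible networks $(B_2^+,B_2^+)$, $(B_3^-,B_3^-)$, $(B_3^-,C_4^-)$, any given cycle is of type $B_2^+$, $B_3^-$, or $C_4^-$, so its connection-wise stability indices are computed by Lemma \ref{P&A_B2}, Lemma \ref{P&A_B3}, or the analogous Podvigina--Ashwin result for $C_4^-$ (from \cite{PodviginaAshwin2011}). I would then walk through the cases of each lemma. In cases (i), (ii)(a), (iii)(a), and (where present) (iv)(a), the statement of the lemma itself already asserts that every index of the cycle equals $-\infty$, so the implication is trivial.

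The only thing left to verify is that, in the remaining ``(b)'' branches of cases (ii), (iii), (iv), no index can take the value $-\infty$. Inspecting the formulas, each index is either $+\infty$ or of the form $f^{\ind}(\alpha,1)$ (possibly a minimum of two such values, as in case (iv)(b) of Lemma \ref{P&A_B3}). By the remark recalled just before Lemma \ref{generic_plus_infinity}, $f^{\ind}(\alpha,1)$ is $+\infty$ when $\alpha>0$ and finite when $\alpha<0$; in particular it is never $-\infty$. A minimum of two such finite/$+\infty$ values is also finite or $+\infty$, never $-\infty$. Hence in the ``(b)'' branches every connection of the cycle has a finite or $+\infty$ index, contradicting the hypothesis that some index equals $-\infty$. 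Therefore the cycle must be in one of the ``(a)'' (or type (i)) branches, in which case \emph{all} its indices equal $-\infty$.

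The main obstacle, insofar as there is one, is making the argument uniform across the three cycle types. For $B_2^+$ and $B_3^-$ it is a pure reading of Lemmas \ref{P&A_B2} and \ref{P&A_B3}; for the $C_4^-$ cycle appearing in the $(B_3^-,C_4^-)$ network the same dichotomy (``either all indices are $-\infty$ or each is $+\infty$ / a value of $f^{\ind}$'') holds by the corresponding Podvigina--Ashwin result, so the argument transfers verbatim. Once this uniformity is acknowledged, the proof reduces to a short case check with no further computation required.
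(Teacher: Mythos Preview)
Your proposal is correct and matches the paper's own argument: the paper states that both Lemmas~\ref{generic_plus_infinity} and~\ref{generic_minus_infinity} are ``direct observations of Lemmas~\ref{P&A_B2} and~\ref{P&A_B3}'', noting explicitly that $f^{\ind}(\alpha,1)$ is finite if and only if $\alpha<0$ (hence never $-\infty$), which is exactly the case inspection you outline. Your treatment is if anything slightly more careful, since you flag the need to invoke the analogous Podvigina--Ashwin result for the $C_4^-$ cycle in the $(B_3^-,C_4^-)$ network, a point the paper leaves implicit.
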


\section{The $(B_3^-,B_3^-)$-network}\label{B3B3_network}

This network has been studied, for a subset of parameter values, by Kirk and Silber \cite{KS}.
We briefly recall their notation. The two cycles have a common connection, namely, $[\xi_1 \longrightarrow \xi_2]$ and  are referred to as the $\xi_3$- and $\xi_4$-cycle, depending on which is the remaining node in the cycle. Locally near each node, $\xi_i$ ($i\neq 2$), the eigenvalues of the linearized vector field (\ref{linearization})
are now denoted by $-c_{ij}$, $-c_{ik}$, $-r_i$ and $e_{il}$ (contracting, transverse, radial and expanding, respectively). 
The second index provides information about the cycle we are considering. The choice presented indicates we are looking at node $\xi_i$ from the point of view of the cycle 
$$
[\xi_i \rightarrow \xi_l \rightarrow \xi_j \rightarrow \xi_i].
$$
Node $\xi_k$ does not belong to this cycle and hence $-c_{ik}$ denotes the transverse eigenvalues.
Near $\xi_2$, the eigenvalues are $-c_{21}$ (contracting), $-r_2$ (radial), $e_{23}$ (expanding with respect to the $\xi_3$-cycle and transverse with respect to the $\xi_4$-cycle) and $e_{24}$ (expanding with respect to the $\xi_4$-cycle and transverse with respect to the $\xi_3$-cycle). While in \cite{KS} all constants are positive, we allow some of them to become negative.

In what follows, maps from and to cross-sections of the flow along connections are extensively used. We list these maps in appendix B.

Associated to each cycle there are parameters depending on the eigenvalues, where those for the $\xi_3$-cycle are distinguished by a tilde, as follows:
$$
\begin{array}{lcl}
\rho=\dfrac{c_{42}c_{14}c_{21}}{e_{24}e_{41}e_{12}} & \mbox{\hspace{2cm}} & \tilde{\rho} = \dfrac{c_{32}c_{13}c_{21}}{e_{23}e_{31}e_{12}} \\
& & \\
\nu=\dfrac{e_{23}}{e_{24}}+\dfrac{c_{21}c_{43}}{e_{24}e_{41}}+\dfrac{c_{13}c_{42}c_{21}}{e_{41}e_{24}e_{12}}& \mbox{\hspace{2cm}} & \tilde{\nu}=\dfrac{e_{24}}{e_{23}}+\dfrac{c_{21}c_{34}}{e_{23}e_{31}}+\dfrac{c_{14}c_{32}c_{21}}{e_{31}e_{23}e_{12}} \\
& & \\
\delta = \dfrac{c_{43}}{e_{41}}+\dfrac{c_{13}c_{42}}{e_{12}e_{41}}-\dfrac{e_{23}c_{14}c_{42}}{e_{12}e_{41}e_{24}} & \mbox{\hspace{2cm}} & \tilde{\delta} = \dfrac{c_{34}}{e_{31}}+\dfrac{c_{14}c_{32}}{e_{12}e_{31}}-\dfrac{e_{24}c_{13}c_{32}}{e_{12}e_{31}e_{23}} \\
& & \\
\tau = \dfrac{c_{13}}{e_{12}}-\dfrac{e_{23}c_{14}}{e_{12}e_{24}}+\dfrac{c_{14}c_{21}c_{43}}{e_{12}e_{41}e_{24}} 
& \mbox{\hspace{2cm}} & \tilde{\tau} =  \dfrac{c_{14}}{e_{12}}-\dfrac{e_{24}c_{13}}{e_{12}e_{23}}+\dfrac{c_{13}c_{21}c_{34}}{e_{12}e_{31}e_{23}} \\
& & \\
\sigma = \dfrac{c_{14}}{e_{12}}\left(\dfrac{e_{23}}{e_{24}}-\dfrac{c_{13}}{c_{14}}\right) & \mbox{\hspace{2cm}} & \tilde{\sigma} = \dfrac{c_{13}}{e_{12}}\left(\dfrac{e_{24}}{e_{23}}-\dfrac{c_{14}}{c_{13}} \right)
\end{array}
$$
We preserve the use of  $\; \tilde{\mbox{}} \;$ to distinguish stability indices (and also the quantities $a_j$ and $b_j$) for the $\xi_3$-cycle from those for the 
$\xi_4$-cycle. The constants required in order to use Lemma \ref{P&A_B3} are given in Table 1. As in \cite{KS}, we make the following

\paragraph{Assumption 1:} Let $\rho, \tilde{\rho} > 1$ and $0<e_{24}/e_{23}<1$.

\begin{table}
\renewcommand{\arraystretch}{1.3}
\begin{tabular}{|c|c|c|}
\hline
\mbox{} & $\xi_3$-cycle & $\xi_4$-cycle \\
\hline
\hline
values at $\xi_1$ & $\tilde{a}_3 = c_{13}/e_{12}$; $\tilde{b}_3 = c_{14}/e_{12}$ & $a_3 = c_{14}/e_{12}$; $b_3 = c_{13}/e_{12}$ \\
\hline
values at $\xi_2$ & $\tilde{a}_1 = c_{21}/e_{23}$; $\tilde{b}_1 = -e_{24}/e_{23}$ & $a_1 = c_{21}/e_{24}$; $b_1 = -e_{23}/e_{24}$   \\
\hline
values at $\xi_3$ & $\tilde{a}_2 = c_{32}/e_{31}$; $\tilde{b}_2 = c_{34}/e_{31}$ & \mbox{}  \\
\hline
values at $\xi_4$ & \mbox{}  & $a_2 = c_{42}/e_{41}$; $b_2 = c_{43}/e_{41}$   \\
\hline
\end{tabular}
\caption{Parameter values at the nodes required in the calculation of the stability index for each cycle in the network. Notation of the eigenvalues as in \cite{KS}.}
\renewcommand{\arraystretch}{1}
\end{table}

\subsection{Another view  of ``A competition between heteroclinic cycles'' \cite{KS}}\label{another-view}

This subsection provides a systematic treatment of the network for the parameter values considered in \cite{KS}, calculating $c$-indices in both cases treated in \cite{KS}: when both cycles have $c$-indices greater than $-\infty$ and when one cycle has all $c$-indices equal to $-\infty$. In what concerns the $n$-indices, this section provides them, also in the case not addressed in \cite{KS} (that of $c$-indices greater than $-\infty$).

With the assumptions made in \cite{KS}, namely that $c_{ij}, e_{ij} > 0$ for all $i$ and $j$,  we need only look at case (iii) of Lemma \ref{P&A_B3}. Note that 

\begin{equation}\label{rho}
\rho = a_1a_2a_3 \quad \text{and} \quad \tilde{\rho}=\tilde{a}_1\tilde{a}_2\tilde{a}_3,
\end{equation}
as well as
\begin{equation}\label{delta}
\delta = b_1a_2a_3+b_3a_2+b_2 \quad \text{and} \quad \tilde{\delta}=\tilde{b}_1\tilde{a}_2\tilde{a}_3+\tilde{b}_3\tilde{a}_2+\tilde{b}_2.
\end{equation}
The following result provides generic information about the stability indices.

\begin{lemma}\label{minus_infinity}
All stability indices for the $\xi_3$- (respectively, $\xi_4$-) cycle are equal to $-\infty$ if and only if $\tilde{\delta}<0$ (respectively, $\delta <0$).
\end{lemma}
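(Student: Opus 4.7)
The plan is a direct application of Lemma \ref{P&A_B3}, exploiting the identities \eqref{rho} and \eqref{delta} together with Assumption 1. I treat the $\xi_4$-cycle; the $\xi_3$-case is identical after adding tildes.

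First I would check which branch of Lemma \ref{P&A_B3} governs the $\xi_4$-cycle. Reading off Table 1 and using that in the \cite{KS} setting all $c_{ij}$ and $e_{ij}$ are positive, we have $b_3 = c_{13}/e_{12} > 0$, $b_2 = c_{43}/e_{41} > 0$, while $b_1 = -e_{23}/e_{24} < 0$. Hence neither case (i) (all $b_j<0$) nor case (ii) (all $b_j>0$) applies; we fall squarely into case (iii), where the dichotomy (a)/(b) is decided by the two quantities $a_1a_2a_3$ and $b_1a_2a_3 + b_3 a_2 + b_2$.

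Next, Assumption 1 supplies $\rho > 1$, and identity \eqref{rho} states $\rho = a_1a_2a_3$, so $a_1a_2a_3 > 1$ is automatic. Therefore the only way to land in case (iii)(a) is for the second inequality to fail. By \eqref{delta}, that second quantity is exactly $\delta = b_1a_2a_3 + b_3a_2 + b_2$. Consequently, case (iii)(a) holds (so all indices for the $\xi_4$-cycle equal $-\infty$) iff $\delta < 0$, whereas $\delta > 0$ puts us in case (iii)(b), where $\sigma_2 = +\infty$ and hence not all indices are $-\infty$. This gives the desired equivalence.

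Finally, I would simply repeat the argument with the tildes in place: the same sign pattern $\tilde{b}_1 < 0$, $\tilde{b}_2, \tilde{b}_3 > 0$ holds from Table 1, Assumption 1 gives $\tilde{\rho} = \tilde{a}_1\tilde{a}_2\tilde{a}_3 > 1$, and \eqref{delta} rewrites $\tilde{b}_1\tilde{a}_2\tilde{a}_3 + \tilde{b}_3\tilde{a}_2 + \tilde{b}_2 = \tilde{\delta}$. There is no real obstacle here beyond the bookkeeping of matching Table 1 to Lemma \ref{P&A_B3}; the only thing to be alert to is the asymmetric role of the common node $\xi_2$, which is precisely what forces $b_1$ (and $\tilde b_1$) to be the negative one, so the computation cannot be done via the simpler case (ii).
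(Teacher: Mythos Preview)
Your proof is correct and follows essentially the same approach as the paper: both invoke Lemma~\ref{P&A_B3} together with the identities \eqref{rho} and \eqref{delta} and Assumption~1 to reduce the question to the sign of $\delta$ (resp.\ $\tilde\delta$). The only difference is that you spell out explicitly why case~(iii) is the relevant branch, whereas the paper simply states this as part of the ambient setup for the subsection.
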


\begin{proof}
Straightforward given Lemma \ref{P&A_B3}. In fact, since $\rho, \tilde{\rho} > 1$, and given (\ref{rho}) and (\ref{delta}), we have all stability indices equal to $-\infty$ for the cycle corresponding to $\delta$ or $\tilde{\delta}$ negative.
\end{proof}

The following two propositions provide describe the $c$-indices of the cycles in the network.

\begin{proposition}\label{KS_fig5}
Under Assumption 1, let $\delta,\tilde{\delta}>0$. The stability indices are as follows:
\begin{eqnarray*}
\tilde{\sigma}_{23} = \sigma_{24} & = & +\infty \\
0 < \tilde{\sigma}_{12} & < & +\infty \\
-\infty < \sigma_{12} & < & 0
\end{eqnarray*}
and either $\tilde{\sigma}_{31} = +\infty$ 
or $\sigma_{41} = +\infty$, but not both. The remaining $c$-index is finite and can take either sign.
\end{proposition}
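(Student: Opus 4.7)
The strategy is straightforward: under the hypotheses, both cycles fall into case (iii)(b) of Lemma \ref{P&A_B3}, so I would read off the $f^{\ind}$-expressions from that lemma and then interpret them via Table 1 together with the definitions of $\sigma$ and $\tilde{\sigma}$.

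First I would verify the hypotheses of case (iii)(b) for both cycles. By Assumption 1, $b_1 = -e_{23}/e_{24} < 0$ and $\tilde{b}_1 = -e_{24}/e_{23} < 0$, while positivity of all $c_{ij}, e_{ij}$ makes the remaining $b_i, \tilde{b}_i$ positive. The identifications (\ref{rho}) and (\ref{delta}) translate $\rho, \tilde{\rho} > 1$ into $a_1 a_2 a_3, \tilde{a}_1 \tilde{a}_2 \tilde{a}_3 > 1$, and $\delta, \tilde{\delta} > 0$ into the second inequality of (iii)(b) for the respective cycle. Case (iii)(b) then immediately delivers $\sigma_{24} = \tilde{\sigma}_{23} = +\infty$, together with
\begin{align*}
\sigma_{12} &= f^{\ind}(b_1, 1), & \tilde{\sigma}_{12} &= f^{\ind}(\tilde{b}_1, 1),\\
\sigma_{41} &= f^{\ind}(b_3 + b_1 a_3, 1), & \tilde{\sigma}_{31} &= f^{\ind}(\tilde{b}_3 + \tilde{b}_1 \tilde{a}_3, 1).
\end{align*}
Since Assumption 1 also imposes $0 < e_{24}/e_{23} < 1$, one has $-1 < \tilde{b}_1 < 0$ and $b_1 < -1$. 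The sign behaviour of $f^{\ind}(\alpha, 1)$ recorded in \cite{PodviginaAshwin2011}, p.\,905 (finite positive for $-1 < \alpha < 0$, finite negative for $\alpha < -1$) then gives $0 < \tilde{\sigma}_{12} < +\infty$ and $-\infty < \sigma_{12} < 0$.

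It remains to analyse $\sigma_{41}$ and $\tilde{\sigma}_{31}$. A short computation from Table 1 and the definitions yields
\begin{align*}
b_3 + b_1 a_3 &= \frac{c_{13} e_{24} - c_{14} e_{23}}{e_{12} e_{24}} = -\sigma, & \tilde{b}_3 + \tilde{b}_1 \tilde{a}_3 &= \frac{c_{14} e_{23} - c_{13} e_{24}}{e_{12} e_{23}} = -\tilde{\sigma}.
\end{align*}
The numerators are negatives of each other and the denominators are positive, so $\sigma$ and $\tilde{\sigma}$ always have opposite signs, excluding the non-generic coincidence $c_{14} e_{23} = c_{13} e_{24}$. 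Exactly one of $-\sigma, -\tilde{\sigma}$ is therefore positive, which forces exactly one of $\sigma_{41}, \tilde{\sigma}_{31}$ to equal $+\infty$; the other is $f^{\ind}$ of a strictly negative argument, hence finite, with sign depending on whether that argument lies in $(-1,0)$ or below $-1$. Since neither case is excluded by the hypotheses, both signs are admissible. There is essentially no obstacle beyond the mild non-degeneracy $\sigma\tilde{\sigma} \neq 0$; the real content is the pair of algebraic identities relating $b_3 + b_1 a_3$ to $-\sigma$ and $\tilde{b}_3 + \tilde{b}_1 \tilde{a}_3$ to $-\tilde{\sigma}$, from which the dichotomy in the last claim falls out automatically.
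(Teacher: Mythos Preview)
Your proposal is correct and follows essentially the same approach as the paper: verify that both cycles fall into case (iii)(b) of Lemma \ref{P&A_B3}, read off the $f^{\ind}$-expressions, and evaluate them via Table 1 together with the identities $b_3+b_1a_3=-\sigma$, $\tilde{b}_3+\tilde{b}_1\tilde{a}_3=-\tilde{\sigma}$. Your explicit observation that the numerators of $\sigma$ and $\tilde{\sigma}$ differ only in sign is a nice way to make the ``exactly one of $\tilde{\sigma}_{31},\sigma_{41}$ equals $+\infty$'' dichotomy transparent; the paper leaves this implicit in its case analysis.
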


The case $\tilde{\sigma}_{31} = +\infty$  corresponds to the first two lines in Figure 5 of \cite{KS}, depending on the sign of $\sigma_{41}$, while the case $\sigma_{41} = +\infty$ corresponds to the last two lines of the same table, also depending on the sign of $\tilde{\sigma}_{31}$.

\begin{figure}[!htb]
\centerline{
\includegraphics[width=0.3\textwidth]{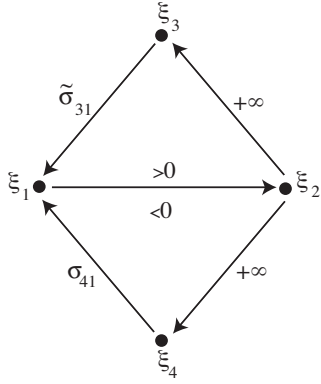}}
\caption{The stability indices for the network in Proposition \ref{KS_fig5}. Exactly one of $\sigma_{41}$ and $\tilde{\sigma}_{31}$ is equal to $+\infty$.}\label{Fig5_KS}
\end{figure}

\begin{proof}
We are looking at case (iii)(b), given the assumptions. In Figure $5$ of \cite{KS}, it is also $\delta, \tilde{\delta} > 0$. Then, from Lemma \ref{P&A_B3}, we obtain for the $\xi_3$-cycle
$$
\sigma_1 = \tilde{\sigma}_{12} = f^{\textnormal{index}}\left(\tilde{b}_1,1\right) = f^{\textnormal{index}}\left(-\frac{e_{24}}{e_{23}},1\right) = \frac{e_{23}}{e_{24}}-1 >0
$$
since $e_{24}<e_{23}$; $\sigma_2 = \tilde{\sigma}_{23} = +\infty$ and
$$
\sigma_3 = \tilde{\sigma}_{31} = f^{\textnormal{index}}\left(\tilde{b}_3+\tilde{b}_1\tilde{a}_3,1\right) = f^{\textnormal{index}}(-\tilde{\sigma},1) = \left\{ 
\begin{array}{ll}
+\infty & \mbox{if} \; \tilde{\sigma} \leq 0 \\
\frac{1}{\tilde{\sigma} }-1 > 0 & \mbox{if} \; \tilde{\sigma} \in (0,1) \\
1-\tilde{\sigma}  <0 & \mbox{if} \; \tilde{\sigma}>1
\end{array} \right. .
$$
For the $\xi_4$-cycle we obtain in a similar way
$$
\sigma_1 = \sigma_{12} = f^{\textnormal{index}}(b_1,1) = f^{\textnormal{index}}\left(-\frac{e_{23}}{e_{24}},1\right) = 1-\frac{e_{23}}{e_{24}} <0
$$
since $e_{24}<e_{23}$; $\sigma_2 = \sigma_{24} = +\infty$ and
$$
\sigma_3 = \sigma_{41} = f^{\textnormal{index}}(-\sigma,1) = \left\{ 
\begin{array}{ll}
+\infty & \mbox{if} \; \sigma \leq 0 \\
\frac{1}{\sigma}-1 > 0 & \mbox{if} \; \sigma \in (0,1) \\
1-\sigma <0 & \mbox{if} \; \sigma>1
\end{array} \right. .
$$

\end{proof}

If we relax the hypothesis that $\delta, \tilde{\delta} >0$, we have the following two possibilities.

\begin{proposition}\label{KS_lemma3}
Under Assumption 1, depending on the sign of $\delta$ the stability indices are as follows:
$$
\mbox{\bf{Case (i) ($\delta<0$):} } \;\; \sigma_{12}=\sigma_{24}=\sigma_{41}=-\infty, \; \; \; \tilde{\sigma}_{23}=\tilde{\sigma}_{34}=+\infty, \; \; \; \tilde{\sigma}_{12}>0
$$
and
$$
\mbox{\bf{Case (ii) ($\delta>0$):} } \;\; \sigma_{12}<0, \; \; \; \sigma_{24}=\sigma_{41}=+\infty, \; \; \; \tilde{\sigma}_{12}=\tilde{\sigma}_{23}=\tilde{\sigma}_{34}=-\infty.
$$
\end{proposition}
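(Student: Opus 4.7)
The plan is to reduce the proposition to a direct application of Lemma \ref{P&A_B3} to each of the two cycles separately, reading off the parameters $a_j, b_j$ from Table 1 and translating $\rho, \tilde\rho, \delta, \tilde\delta$ into the hypotheses of the lemma via the identities (\ref{rho}) and (\ref{delta}). From Table 1 we have $\tilde b_1 = -e_{24}/e_{23} < 0$ and $b_1 = -e_{23}/e_{24} < 0$, while the remaining $\tilde b_j, b_j$ are positive under the sign convention of \cite{KS}; combined with $\rho, \tilde\rho > 1$ from Assumption 1, this places each cycle in case (iii) of Lemma \ref{P&A_B3}, with the choice between subcases (a) and (b) controlled by the sign of $\delta$ or $\tilde\delta$. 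Lemma \ref{minus_infinity} then handles the ``dead'' cycle in each case, and the work reduces to computing the three stability indices of the surviving cycle.

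For Case (i), $\delta<0$ immediately yields $\sigma_{12} = \sigma_{24} = \sigma_{41} = -\infty$ by Lemma \ref{minus_infinity}. For the $\xi_3$-cycle, the scenario inherited from Lemma~3(i) of \cite{KS} places us in case (iii)(b) of Lemma \ref{P&A_B3}, so $\tilde\sigma_{23} = +\infty$. Next, $\tilde\sigma_{12} = f^{\textnormal{index}}(\tilde b_1, 1) = f^{\textnormal{index}}(-e_{24}/e_{23}, 1)$, which is strictly positive because Assumption 1 gives $e_{24}/e_{23} \in (0,1)$; indeed the value is $e_{23}/e_{24} - 1 > 0$. Finally $\tilde\sigma_{31} = f^{\textnormal{index}}(\tilde b_3 + \tilde b_1 \tilde a_3, 1) = f^{\textnormal{index}}(-\tilde\sigma, 1) = +\infty$ under the accompanying sign condition $\tilde\sigma \leq 0$ inherited from that scenario.

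For Case (ii), $\delta>0$ keeps the $\xi_4$-cycle in case (iii)(b): $\sigma_{24} = +\infty$ is automatic, $\sigma_{41} = f^{\textnormal{index}}(-\sigma, 1) = +\infty$ under $\sigma \leq 0$, and $\sigma_{12} = f^{\textnormal{index}}(b_1, 1) = 1 - e_{23}/e_{24} < 0$ since now $e_{23}/e_{24} > 1$. Simultaneously, the inherited hypothesis forces $\tilde\delta < 0$, so Lemma \ref{minus_infinity} collapses all three $\xi_3$-cycle indices to $-\infty$. The only real subtlety in the whole argument is tracking when the $f^{\textnormal{index}}$-argument is non-positive versus when it lies in $(-1,0)$ versus when it is below $-1$: these three regimes produce $+\infty$, a positive finite value, and a negative finite value respectively, and the distinction is driven in each case by Assumption 1's ratio $e_{24}/e_{23}$ together with the ancillary sign conditions on $\sigma, \tilde\sigma, \tilde\delta$ that accompany the KS scenarios. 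Modulo this bookkeeping the proof is a direct transcription of Lemma \ref{P&A_B3}.
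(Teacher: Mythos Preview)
Your proposal is correct and follows exactly the approach the paper indicates: the paper's own proof reads ``The proof is straightforward using Lemma \ref{P&A_B3} and omitted,'' and you carry out precisely that application. One small sharpening worth noting: the sign conditions on $\sigma$ and $\tilde\sigma$ that you describe as ``inherited from that scenario'' are in fact forced by the stated hypotheses, since $\delta = b_2 - a_2\sigma$ and $\tilde\delta = \tilde b_2 - \tilde a_2\tilde\sigma$; thus, for example, $\delta<0$ with $a_2,b_2>0$ gives $\sigma>0$, whence $\tilde\sigma = -\tfrac{e_{24}}{e_{23}}\sigma<0$, making the argument self-contained rather than dependent on unstated hypotheses from \cite{KS}.
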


The proof is straightforward using Lemma \ref{P&A_B3} and omitted. 

\begin{figure}[!htb]
\centerline{
\includegraphics[width=0.7\textwidth]{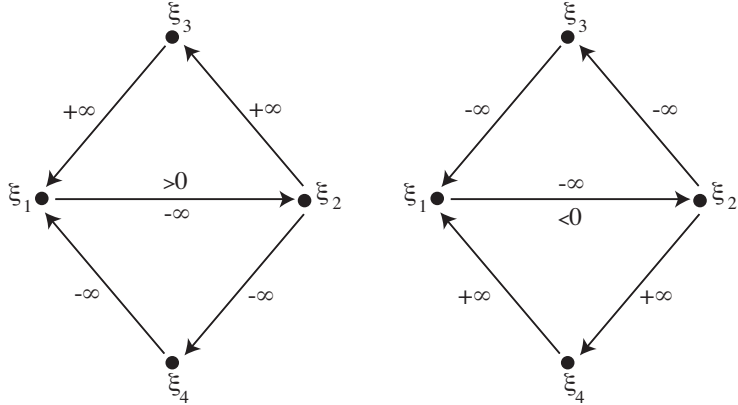}}
\caption{The stability indices for the network in Proposition \ref{KS_lemma3}. Case (i) is depicted on the left, whereas case (ii) appears on the right.}\label{Lemma3_KS}
\end{figure}

In Proposition \ref{KS_fig5}, all return maps around each cycle are contractions. In the case studied in Proposition \ref{KS_lemma3}, we have the competition between the cycles described by Kirk and Silber \cite{KS}. However, we note that, in either case, there is one cycle with all stability indices equal to $-\infty$. That is, one of the cycles attracts hardly anything in its neighbourhood. It follows from Lemma \ref{generic_plus_infinity} that then the other cycle has at least one stability index equal to $+\infty$, that is, the other cycle attracts almost all points near at least one of its connections.

\medskip

The competition between the cycles addressed in \cite{KS} becomes apparent when we calculate the stability indices with respect to the whole network. We include in Lemma \ref{KS_fig5-network} the case of the parameter values in Proposition \ref{KS_fig5}, not addressed in \cite{KS}, thus completing their study.

Recall that we use the superscript $n$ when the stability index is calculated with respect to the whole network. For the common trajectory we write simply $\sigma^n_{12}$.

\begin{lemma}\label{KS_fig5-network}
Under Assumption 1, let $\delta,\tilde{\delta}>0$. The stability indices, with respect to the network, are all positive. Furthermore, no connection with a finite $c$-index has an infinite $n$-index.
\end{lemma}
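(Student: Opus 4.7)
The plan is to combine Lemma \ref{index_cycle-network}, which imports positivity from $c$-indices to $n$-indices, with Lemma \ref{index_calculation}, which handles the remaining connections whose $c$-indices are finite (and possibly negative). The inventory of $c$-indices from Proposition \ref{KS_fig5} is the starting point.

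First, I would apply Lemma \ref{index_cycle-network} to each cycle, so that $\sigma^n_{ij}\geq\max\{\sigma_{ij},\tilde\sigma_{ij}\}$. By Proposition \ref{KS_fig5}, $\tilde\sigma_{23}=\sigma_{24}=+\infty$ and $\tilde\sigma_{12}>0$, giving at once $\sigma^n_{23}=\sigma^n_{24}=+\infty$ and $\sigma^n_{12}>0$; since exactly one of $\tilde\sigma_{31},\sigma_{41}$ equals $+\infty$, the corresponding $n$-index is $+\infty$ as well. What remains is (a) to prove positivity of the one $n$-index in $\{\sigma^n_{31},\sigma^n_{41}\}$ whose underlying $c$-index is finite, and (b) to show that this $n$-index together with $\sigma^n_{12}$ is finite, so that the second claim of the lemma holds.

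Both remaining items I would settle by a single cross-section computation using Lemma \ref{index_calculation}. Fix $x$ on the connection in question, take a transverse cross-section $\Sigma_x$, and use the return maps of Appendix B together with Assumption 1 (each cycle's return map is a contraction, so a captured trajectory converges to that single cycle and cannot drift indefinitely between the two). Under these conditions $\B(X)\cap B_\varepsilon(x)$ equals $B_\varepsilon(x)$ minus a finite union of cusp regions of the form $\{k_m y_1^{\alpha_m}\leq y_2\leq \hat k_m y_1^{\alpha_m}\}$, where the exponents $\alpha_m$ are polynomial combinations of the ratios $a_j,b_j$; the hypothesis $\delta,\tilde\delta>0$ keeps the $\alpha_m$ bounded away from $1$ and ensures that values both above and below $1$ are present. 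Lemma \ref{index_calculation} then returns a strictly positive, finite value, giving positivity of the remaining $n$-index on $[\xi_3\to\xi_1]$ or $[\xi_4\to\xi_1]$ and finiteness of $\sigma^n_{12}$ in one stroke.

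The main obstacle is the geometric identification of the cusp regions at the common connection $[\xi_1\to\xi_2]$: the transverse expansion at $\xi_2$ sweeps initial conditions either toward the $\xi_3$-cycle or toward the $\xi_4$-cycle, so one must trace, transit by transit, which combinations of passages around the two cycles ultimately leave $B_\delta(X)$ and which are absorbed. The contractive return maps restrict this bookkeeping to finitely many transit patterns, and the explicit formulae for the return maps in Appendix B allow one to read off the exponents $\alpha_m$ from the eigenvalue ratios and verify the hypotheses of Lemma \ref{index_calculation}; I expect this to be the step requiring the most care.
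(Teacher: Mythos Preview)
Your plan is essentially the paper's: import positivity from $c$-indices via Lemma~\ref{index_cycle-network}, then compute the escape set in a cross-section for the remaining connections. The one point worth correcting is that you overestimate the cross-section work. Because $\delta,\tilde\delta>0$, both return maps $h_1,\tilde h_1$ are contractions on their domains (this is Lemma~2 of \cite{KS}); hence any point of $H_1^{\out,2}$ that lands in $\dom(h_1)\cup\dom(\tilde h_1)$ is already attracted to one of the cycles. The escape set in $H_1^{\out,2}$ is therefore the \emph{single} cusp $(\dom(h_1)\cup\dom(\tilde h_1))^c$, and for $[\xi_4\to\xi_1]$ (resp.\ $[\xi_3\to\xi_1]$) one only needs the preimage of that cusp under the single local map $\phi_{412}$ (resp.\ $\phi_{312}$), which the paper writes down explicitly. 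No iteration and no transit-by-transit bookkeeping are required, and Lemma~\ref{index_calculation} is used only in its trivial one-cusp form. The ``main obstacle'' you anticipate does not arise here; it appears only in later lemmas (e.g.\ Lemma~\ref{sigma12}) where one return map fails to be a contraction and one must track the preimages $\E_n=\tilde h_1^{-n}(\E_0)$.
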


\begin{proof}
Because $\delta, \tilde{\delta}>0$ all return maps are contractions and we need only calculate along each connection the set of points taken outside $\dom(\tilde{h}_1) \cup \dom(h_1)$ by the local maps. All maps and domains from \cite{KS} can be found in the appendix.

By Lemma \ref{index_cycle-network} we know that $\tilde{\sigma}^n_{23} = \sigma^n_{24} = +\infty$ and $\sigma^n_{12}>0$. That $\sigma^n_{12}$ is finite follows from the observation that the union of the domains of definition of the return maps around each cycle starting at this connection excludes a cusp-shaped region. 

The proof proceeds by determining $\tilde{\sigma}^n_{31}$ and $\sigma_{41}$, case by case corresponding to each line of Figure 5 in \cite{KS}.

\paragraph{Line $1$:}  \hspace{.3cm}
Because of $\sigma >1$ and $\tilde{\sigma}<0$ we have $\sigma_{41}<0$ and $\tilde{\sigma}_{31} = +\infty$, which implies $\tilde{\sigma}^n_{31} = +\infty$. We calculate $\sigma^n_{41}$ by looking at the set of points 
\begin{align*}
\E_0 &= \{ (x,y) \in H_1^{\inn,4} | \; \; \phi_{412} (x,y) \notin \dom(\tilde{h}_1) \cup \dom(h_1) \}\\
&= \{ (x,y) \in H_1^{\inn,4} | \; \; (y x^{c_{13}/e_{12}}, x^{c_{14}/e_{12}}) \notin \dom(\tilde{h}_1) \cup \dom(h_1) \}\\
&= \{ (x,y) \in H_1^{\inn,4} | \; \; x^{-\tilde{\sigma}} < y < x^{-\tilde{\sigma}} \}.
\end{align*}
Since $-\tilde{\sigma} > 0$, the set $\E_0$ is a thin cusp and we have $\sigma^n_{41} > 0$ and finite.

\paragraph{Line $2$:}  \hspace{.3cm}
As in the previous case $\tilde{\sigma}^n_{31} = +\infty$. Moreover, $\sigma^n_{41} \geq \sigma_{41}>0$ and finite because of $0 < \sigma < 1$.

\paragraph{Line $3$:}  \hspace{.3cm}
In this case and the next we have $\sigma_{41} = +\infty$ due to $\sigma < 0$, so $\sigma^n_{41}=+\infty$. Because of $0 < \tilde{\sigma} < 1$ it follows that $\tilde{\sigma}^n_{31} \geq \tilde{\sigma}_{31} > 0$.

\paragraph{Line $4$:}  \hspace{.3cm}
Since $\tilde{\sigma} > 1$ we have $\tilde{\sigma}_{31}<0$ and thus determine $\tilde{\sigma}^n_{31}$ by calculating
\begin{align*}
\E_0 &= \{ (x,y) \in H_1^{\inn,3} | \; \; \phi_{312} (x,y) \notin \dom(\tilde{h}_1) \cup \dom(h_1) \}\\
&= \{ (x,y) \in H_1^{\inn,3} | \; \; (x^{c_{13}/e_{12}}, y x^{c_{14}/e_{12}}) \notin \dom(\tilde{h}_1) \cup \dom(h_1) \}\\
&= \{ (x,y) \in H_1^{\inn,3} | \; \; x^{\tilde{\sigma}} < y < x^{\tilde{\sigma}} \}.
\end{align*}
As in the first case this is a thin cusp and therefore we obtain $\tilde{\sigma}^n_{31} > 0$ and finite.
\end{proof}

When $\tilde{\sigma}_{31}>0$, corresponding to lines $1$, $2$ and $3$, the $\xi_3$-cycle is p.a.s.\ while the $\xi_4$-cycle is not. Hence, the $\xi_3$-cycle is trivially more visible in the network (it is more relatively stable), even though in the case corresponding to line $3$ the $\xi_4$-cycle has two stability indices equal to $+\infty$. Note that the behaviour along the common connecting trajectory is of the utmost importance in this case: although with respect to the network the stability index along this trajectory is positive, it is negative with respect to only the $\xi_4$-cycle and positive with respect to only the $\xi_3$-cycle, indicating that most points near this common connection are taken to $\xi_3$ rather than to $\xi_4$. The positive stability index with respect to the network merely informs that points not taken to $\xi_4$ still remain near the network (being taken to $\xi_3$ rather than away from the network).

Note also that, even when the $c$-index $\tilde{\sigma}_{31}$ is negative the $\xi_3$ cycle fails to be p.a.s.. However, the corresponding $n$-index, $\tilde{\sigma}^n_{31}$ is positive. Thus, being part of a network produces a stabilizing effect in the cycles in the sense that two non p.a.s.\ cycles give rise to a p.a.s.\ network.

\begin{lemma}\label{KS_lemma3-network}
Under Assumption 1, the stability indices, with respect to the network, are as follows:
$$
\mbox{\bf{Case (i) ($\delta<0$):} } \;\; \sigma_{12}^n,\sigma_{24}^n,\sigma_{41}^n>0, \; \; \; \tilde{\sigma}_{23}^n=\tilde{\sigma}_{31}^n=+\infty.
$$
$$
\mbox{\bf{Case (ii) ($\delta>0$):} } \;\; \sigma_{12}^n,\tilde{\sigma}_{23}^n,\tilde{\sigma}_{31}^n>0, \; \; \; \sigma_{24}^n=\sigma_{41}^n=+\infty.
$$
\end{lemma}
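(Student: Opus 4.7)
The plan is to split each case by connection, applying Lemma \ref{index_cycle-network} whenever the corresponding $c$-index is already $+\infty$ or positive, and carrying out an explicit cross-section computation via Lemma \ref{index_calculation} for the rest. In every situation I would show that the basin of attraction of the network fills a small neighbourhood of the chosen connection except for a cusp-shaped escape set, whose exponents can be read off from the composed local and global maps listed in the appendix.

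In Case~(i), Proposition \ref{KS_lemma3} gives $\tilde{\sigma}_{23}=\tilde{\sigma}_{31}=+\infty$ and $\tilde{\sigma}_{12}>0$, so Lemma \ref{index_cycle-network} at once yields $\tilde{\sigma}^n_{23}=\tilde{\sigma}^n_{31}=+\infty$ and $\sigma^n_{12}\geq\tilde{\sigma}_{12}>0$. The remaining indices $\sigma^n_{24},\sigma^n_{41}$ live on connections of the unstable $\xi_4$-cycle (all of whose $c$-indices are $-\infty$), so the $\xi_4$-cycle contributes nothing to measure and the network's basin there is essentially that of the attracting $\xi_3$-cycle. Following the recipe of Lemma \ref{KS_fig5-network}, I would compose the local maps at $\xi_1$ and $\xi_2$ with the global map along $[\xi_1\to\xi_2]$ to produce $\phi_{412}$, determine the preimage of $\{\phi\notin\dom(\tilde{h}_1)\}$, and observe that the condition $e_{24}/e_{23}<1$ forces this preimage to be a cusp $\{k x^\alpha\leq y\leq \hat{k} x^\alpha\}$ with $\alpha>1$. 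For $\sigma^n_{24}$ one composes additionally the local maps at $\xi_4$ and $\xi_1$ and the global map along $[\xi_4\to\xi_1]$, but the structure of the escape set is identical. Lemma \ref{index_calculation} then yields positive finite values for both indices.

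Case~(ii) is the mirror image: Lemma \ref{index_cycle-network} delivers $\sigma^n_{24}=\sigma^n_{41}=+\infty$. For $\sigma^n_{12}$, $\tilde{\sigma}^n_{23}$ and $\tilde{\sigma}^n_{31}$ the geometric content is a \emph{detour mechanism}: since $\tilde\delta<0$ with the destabilising contribution coming from $\tilde{b}_1=-e_{24}/e_{23}<0$ at the shared node $\xi_2$, a typical trajectory starting near the $\xi_3$-cycle has its $y_4$-component amplified, is ejected into a neighbourhood of $\xi_4$, and is then captured by the attracting $\xi_4$-cycle. Such a trajectory stays inside the network's $\delta$-neighbourhood throughout, so it lies in $\B_\delta(X)$ even though it is absent from both $\B_\delta(\xi_3\text{-cycle})$ and the $\delta$-local basin of the $\xi_4$-cycle viewed from the shared connection. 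The escape set on a cross-section of each of $[\xi_1\to\xi_2]$, $[\xi_2\to\xi_3]$ and $[\xi_3\to\xi_1]$ consists of the points whose transverse $y_4$-perturbation fails to outgrow the expansion along the cycle before leaving the neighbourhood; it is again a cusp with exponent bounded away from $1$, giving positive finite $n$-indices via Lemma \ref{index_calculation}.

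The main obstacle lies in the cusp identification in Case~(ii), since escape from the unstable $\xi_3$-cycle typically occurs only after several revolutions, not after one pass. One must therefore analyse the iterated $\xi_3$-cycle return map and verify that the union over all iteration counts of the resulting escape cusps still satisfies the hypotheses of Lemma \ref{index_calculation}: bounded exponents and at least one positive exponent. These reduce to explicit inequalities in the eigenvalue ratios $\tilde{a}_j$ and $\tilde{b}_j$ and follow from the contraction of the $\xi_4$-cycle's return map ($\delta>0$), after which the computation proceeds exactly as in Lemma \ref{KS_fig5-network}.
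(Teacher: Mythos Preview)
The paper does not prove this lemma at all: it defers entirely to Lemma~3 of Kirk and Silber \cite{KS}. Your proposal is therefore a genuine alternative, reconstructing the result with the machinery of Lemmas~\ref{index_cycle-network} and~\ref{index_calculation} and the cross-section maps of Appendix~B. The overall strategy is correct and is exactly what the paper itself carries out in the closely related settings of Lemma~\ref{KS_fig5-network} and Subsection~\ref{stabilizing}.

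There is, however, a real gap in your treatment of Case~(i). You claim that for $\sigma^n_{41}$ and $\sigma^n_{24}$ it suffices to pull back the complement of $\dom(\tilde h_1)$ under $\phi_{412}$ (respectively $\phi_{412}\circ\phi_{241}$) and that ``the condition $e_{24}/e_{23}<1$ forces this preimage to be a cusp with $\alpha>1$''. This is not right: the preimage of $\{\phi_{412}(x,y)\in\dom(\tilde h_1)\}$ in $H_4^{\out,1}$ is $\{y>x^{\sigma}\}$, and under $\delta<0$ one only knows $\sigma>c_{43}/c_{42}>0$, so $\sigma\in(0,1)$ is possible, in which case the \emph{direct-entry} set is the thin side of the cusp and your argument fails. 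The missing piece is precisely the detour mechanism you describe for Case~(ii): a point that lands in $\dom(h_1)$ makes one or more further turns around the (expanding, since $\delta<0$) $\xi_4$-cycle before eventually being fed into $\dom(\tilde h_1)$, and you must take the union over all such iterates and check that the residual escape set still satisfies the hypotheses of Lemma~\ref{index_calculation}. In other words, Case~(i) and Case~(ii) are structurally symmetric under $3\leftrightarrow 4$, and both require the iterated-return-map analysis you sketched only for Case~(ii); the computations in Lemmas~\ref{sigma12}--\ref{sigma41} of Subsection~\ref{stabilizing} provide the template.
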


The proof is given in Lemma 3 of \cite{KS}.

For the parameter values of case (i) above the $\xi_3$-cycle is p.a.s.. It is therefore not surprising that the stability indices along the connections belonging to this cycle are all positive. In case (ii) none of the cycles is p.a.s.\ even though the network as a whole is p.a.s.. The attraction properties observed for the $\xi_4$-cycle are patent in the positive $n$-indices for the connections of this cycle. Again, the common connection is essential for understanding the visibility (or victory under competition) of the $\xi_4$-cycle. The fact that $\sigma_{12}^n>0$ ensures that most points remain near the network, while $\sigma_{12}>\tilde{\sigma}_{12}$ indicates that most of these points have trajectories which eventually come close to $\xi_4$. Again, case (ii) exhibits a stabilizing effect of the network.

\subsection{Positive transverse eigenvalues}\label{positive-transverse}

The existence of a common connecting trajectory ensures that, relative to one cycle, there is always a positive transverse eigenvalue at $\xi_2$. In this subsection, we consider the possibility of having another positive transverse eigenvalue, either at $\xi_3$ or at $\xi_4$. In order to do this, we admit that either $c_{34}$ or $c_{43}$ may be negative, thus creating a positive transverse eigenvalue at $\xi_3$ or $\xi_4$, respectively. We assume that the positive transverse eigenvalue is weaker than the expanding eigenvalue at the node. We further assume that $\tau, \tilde{\tau}, \delta, \tilde{\delta} >0$, to avoid the extreme case of  $c$-indices equal to $-\infty$. We thus have

\paragraph{Assumption 2:} Let $\tau, \tilde{\tau}, \delta, \tilde{\delta} >0$; $|c_{34}|<e_{31}$ at $\xi_3$ and $|c_{43}|<e_{41}$ at $\xi_4$.

\begin{proposition}\label{c34_negative}
Under Assumptions 1 and 2, if $c_{34}<0$, then $|\tilde{\sigma}_{12}|, |\sigma_{41}|<\infty$, $\tilde{\sigma}_{23} >0$, $\tilde{\sigma}_{31}=\sigma_{14}=+\infty$ and $\sigma_{12} <0$.
\end{proposition}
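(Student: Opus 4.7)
The plan is to apply Lemma \ref{P&A_B3} separately to each of the two cycles, identifying the appropriate subcase from the signs of $\tilde{b}_j$ and $b_j$ as read off Table 1, and then reading off the three indices for each cycle.

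For the $\xi_3$-cycle I would first record the signs: $\tilde{b}_1 = -e_{24}/e_{23}\in(-1,0)$ by Assumption 1, $\tilde{b}_2 = c_{34}/e_{31}\in(-1,0)$ by $c_{34}<0$ together with $|c_{34}|<e_{31}$ from Assumption 2, and $\tilde{b}_3 = c_{14}/e_{12}>0$. This is the sign pattern of case (iv) of Lemma \ref{P&A_B3}. To place us in case (iv)(b) I would verify the three required inequalities: $\tilde{a}_1\tilde{a}_2\tilde{a}_3 = \tilde{\rho}>1$ from Assumption 1; $\tilde{b}_1\tilde{a}_2\tilde{a}_3 + \tilde{b}_3\tilde{a}_2 + \tilde{b}_2 = \tilde{\delta}>0$ from equation (\ref{delta}) and Assumption 2; and $\tilde{b}_2\tilde{a}_1\tilde{a}_3 + \tilde{b}_1\tilde{a}_3 + \tilde{b}_3 = \tilde{\tau}>0$ by direct term-by-term substitution from Table 1 together with Assumption 2. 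Case (iv)(b) would then immediately give $\tilde{\sigma}_{31}=+\infty$ and $\tilde{\sigma}_{23}=f^{\ind}(\tilde{b}_2,1)>0$, the latter being positive because $\tilde{b}_2\in(-1,0)$ forces $f^{\ind}(\tilde{b}_2,1)=-1/\tilde{b}_2 - 1 >0$. For $\tilde{\sigma}_{12}=\min\{f^{\ind}(\tilde{b}_1,1),f^{\ind}(\tilde{b}_1+\tilde{b}_2\tilde{a}_1,1)\}$, I would observe that both arguments of $f^{\ind}$ are strictly negative real numbers (the first by Assumption 1, the second because $\tilde{b}_1<0$ and $\tilde{b}_2\tilde{a}_1<0$), so $f^{\ind}$ returns a finite value in each slot and hence $|\tilde{\sigma}_{12}|<\infty$.

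For the $\xi_4$-cycle the analysis is independent of $c_{34}$: since $c_{43}$ is still positive, $b_2 = c_{43}/e_{41}>0$, $b_1 = -e_{23}/e_{24}<-1$ by Assumption 1, and $b_3 = c_{13}/e_{12}>0$, putting us in case (iii) of Lemma \ref{P&A_B3}. Together with $\rho>1$ and $\delta>0$ this activates case (iii)(b), so I would read off $\sigma_{24}=+\infty$, $\sigma_{12}=f^{\ind}(-e_{23}/e_{24},1) = 1 - e_{23}/e_{24}<0$ (the argument lies below $-1$), and $\sigma_{41}=f^{\ind}(b_3+b_1a_3,1)=f^{\ind}(-\sigma,1)$. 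I would interpret the statement ``$\sigma_{14}=+\infty$'' in the proposition as a typographical variant of $\sigma_{24}=+\infty$, in keeping with the connection-labelling convention of Section \ref{B3B3_network}.

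The only step requiring actual computation rather than bookkeeping is identifying the third case-(iv)(b) inequality with $\tilde{\tau}$: writing out $\tilde{b}_2\tilde{a}_1\tilde{a}_3 + \tilde{b}_1\tilde{a}_3 + \tilde{b}_3$ from Table 1 and matching terms against the definition of $\tilde{\tau}$ given at the start of Section \ref{B3B3_network}. A minor subtlety is that $|\sigma_{41}|<\infty$ demands $\sigma>0$, which is the generic regime parallel to that of Proposition \ref{KS_fig5}; in the alternative regime $\sigma\le 0$ one would simply read $\sigma_{41}=+\infty$ from $f^{\ind}$, and the rest of the proof would be unaffected.
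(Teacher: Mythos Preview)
Your argument is essentially the paper's own, and the bookkeeping for the $\xi_3$-cycle (placing it in case (iv)(b), identifying the third inequality with $\tilde{\tau}$, and reading off $\tilde{\sigma}_{31}=+\infty$, $\tilde{\sigma}_{23}>0$, $|\tilde{\sigma}_{12}|<\infty$) is correct, as is your handling of the $\xi_4$-cycle via case (iii)(b) and your reading of ``$\sigma_{14}$'' as the typo it is for $\sigma_{24}$.

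There is, however, one genuine gap. The proposition asserts $|\sigma_{41}|<\infty$ \emph{unconditionally} under the stated hypotheses, and for this you must have $\sigma>0$. You treat $\sigma>0$ as a ``generic regime'' that might or might not hold, offering the alternative $\sigma_{41}=+\infty$ when $\sigma\le 0$. In fact $\sigma>0$ is forced: from the definitions one has
\[
\tilde{\delta}=\frac{c_{34}}{e_{31}}-\frac{c_{32}}{e_{31}}\,\tilde{\sigma},
\]
so $c_{34}<0$ together with $\tilde{\delta}>0$ (Assumption 2) gives $\tilde{\sigma}<0$. Since $\sigma=-\dfrac{e_{23}}{e_{24}}\,\tilde{\sigma}$, this yields $\sigma>0$, hence $-\sigma<0$ and $\sigma_{41}=f^{\ind}(-\sigma,1)$ is finite. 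This is exactly the step the paper carries out in its last paragraph; without it, your argument does not establish the claim $|\sigma_{41}|<\infty$.
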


\begin{figure}[!htb]
\centerline{
\includegraphics[width=0.3\textwidth]{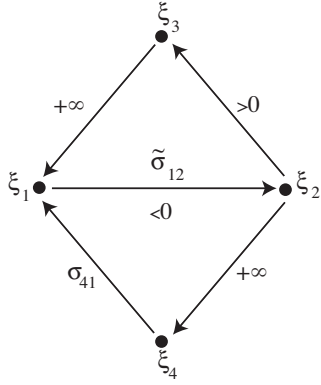}}
\caption{The $c$-indices for the network in Proposition \ref{c34_negative}. Stability indices $\tilde{\sigma}_{12}$ and $\sigma_{41}$ are finite but have no predetermined sign.}\label{negative_c34}
\end{figure}

\begin{proof}
The stability indices for the $\xi_4$-cycle are as in Proposition \ref{KS_fig5}.
For the $\xi_3$-cycle, given the imposed signs of the parameters, we are in case (iv)(b) of Lemma \ref{P&A_B3}. Since $\tilde{b}_1,\tilde{b}_2<0$, we have $f^{\textnormal{index}}\left(\tilde{b}_1+\tilde{b}_2\tilde{a}_1,1\right)<f^{\textnormal{index}}\left(\tilde{b}_1,1\right)$ and
$$
\sigma_1=\tilde{\sigma}_{12}=f^{\textnormal{index}}\left(\tilde{b}_1+\tilde{b}_2\tilde{a}_1,1\right)=\left\{
\begin{array}{ll}
-\frac{1}{\tilde{b}_1+\tilde{b}_2\tilde{a}_1}-1 > 0 & \mbox{if} \; \tilde{b}_1+\tilde{b}_2\tilde{a}_1 \in (-1,0) \\[5pt]
\tilde{b}_1+\tilde{b}_2\tilde{a}_1+1  <0 & \mbox{if} \; \tilde{b}_1+\tilde{b}_2\tilde{a}_1<-1
\end{array} \right. .
$$
The assumption that $|c_{34}|<e_{31}$ ensures that 
$$
\sigma_2=\tilde{\sigma}_{23}=-\frac{e_{31}}{c_{34}}-1 >0.
$$
Since we assume $\delta, \tilde{\delta} >0$, their expressions as functions of, respectively, $\tilde{\sigma}$ and $\sigma$ impose $\tilde{\sigma}<0$ and $\sigma >0$. Then $\tilde{\sigma}_{31}=+\infty$ and $|\sigma_{41}| < \infty$.
\end{proof}

We remark that in this case it is possible to choose signs for $\tilde{\sigma}_{12}$ and $\sigma_{41}$ so that the signs of the stability indices coincide for the connecting trajectories in both cycles: $\tilde{\sigma}_{12}< 0$ and $\sigma_{41}>0$. In this case, neither cycle is p.a.s.. Note that if $\tilde{\sigma}_{12}> 0$ then the $\xi_3$-cycle is p.a.s.\ and therefore always more visible.

\begin{proposition}\label{c43_negative}
Under Assumptions 1 and 2, if $c_{43}<0$, then $|\tilde{\sigma}_{31}| <\infty$, $\tilde{\sigma}_{12}, \sigma_{24} >0$, $\tilde{\sigma}_{23}=\sigma_{41} = +\infty$ and $\sigma_{12} <0$.
\end{proposition}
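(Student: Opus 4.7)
The strategy mirrors that of Proposition \ref{c34_negative} with the roles of the $\xi_3$- and $\xi_4$-cycles swapped. Since $c_{43}<0$ while $c_{34}>0$ remains in force, the quantity $b_2=c_{43}/e_{41}$ becomes negative for the $\xi_4$-cycle, whereas $\tilde{b}_2=c_{34}/e_{31}$ is still positive. Together with $\tilde{b}_1=-e_{24}/e_{23}\in(-1,0)$ and $b_1=-e_{23}/e_{24}<-1$ from Assumption 1, and $b_3,\tilde{b}_3>0$, this puts the $\xi_4$-cycle in case (iv) of Lemma \ref{P&A_B3} while the $\xi_3$-cycle stays in case (iii). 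The two cycles are then handled separately.

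For the $\xi_3$-cycle I would verify the hypotheses of case (iii)(b): $\tilde{\rho}>1$ is Assumption 1, and $\tilde{\delta}>0$ is Assumption 2 after rewriting $\tilde{\delta}$ as $\tilde{b}_1\tilde{a}_2\tilde{a}_3+\tilde{b}_3\tilde{a}_2+\tilde{b}_2$ using (\ref{rho}) and (\ref{delta}). The lemma immediately yields $\tilde{\sigma}_{23}=+\infty$ and
$$
\tilde{\sigma}_{12}=f^{\ind}(\tilde{b}_1,1)=\frac{e_{23}}{e_{24}}-1>0,
$$
while the identification $\tilde{b}_3+\tilde{b}_1\tilde{a}_3=-\tilde{\sigma}$ gives $\tilde{\sigma}_{31}=f^{\ind}(-\tilde{\sigma},1)$, which is finite precisely when $\tilde{\sigma}>0$. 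The sign of $\tilde{\sigma}$ would be extracted by the same sort of sign-bookkeeping used to conclude the analogous statement about $\sigma$ in Proposition \ref{c34_negative}, exploiting the identity $\tilde{\delta}=\tilde{b}_2-\tilde{a}_2\tilde{\sigma}$ together with the signs of $\tilde{b}_2$ and $\tilde{a}_2$ in force here.

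For the $\xi_4$-cycle I would verify the three hypotheses of case (iv)(b): $\rho>1$ (Assumption 1), together with the identifications $b_2a_1a_3+b_1a_3+b_3=\tau>0$ and $b_1a_2a_3+b_3a_2+b_2=\delta>0$ (both Assumption 2, after a direct expansion of definitions). The lemma then gives $\sigma_{41}=+\infty$ immediately and
$$
\sigma_{24}=f^{\ind}(b_2,1)=-\frac{1}{b_2}-1>0,
$$
finite because $b_2\in(-1,0)$ is a consequence of combining $c_{43}<0$ with $|c_{43}|<e_{41}$ in Assumption 2. For the common connection, $\sigma_{12}=\min\{f^{\ind}(b_1,1),\,f^{\ind}(b_1+b_2a_1,1)\}$; since $b_1<-1$ and $b_2a_1<0$, both arguments lie in $(-\infty,-1)$, an interval on which $f^{\ind}(\alpha,1)=\alpha+1$. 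The minimum is therefore the more negative expression $b_1+b_2a_1+1$, which is finite and strictly negative, establishing $\sigma_{12}<0$.

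The main obstacle is algebraic rather than conceptual: matching the shorthand quantities $\rho,\delta,\tau,\sigma$ inherited from \cite{KS} with the combinations of $a_j,b_j$ appearing in Lemma \ref{P&A_B3}, and keeping track of the fact that the pairing of eigenvalues into $(a_j,b_j)$ differs between the two cycles because the two eigenvalues at the shared node $\xi_2$ swap their expanding and transverse roles. Apart from this bookkeeping the argument is routine application of Lemma \ref{P&A_B3} and the explicit formula for $f^{\ind}(\cdot,1)$.
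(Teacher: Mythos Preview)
Your approach is essentially the paper's: the $\xi_3$-cycle falls under case (iii)(b) of Lemma \ref{P&A_B3}, the $\xi_4$-cycle under case (iv)(b), the identifications $b_1a_2a_3+b_3a_2+b_2=\delta$ and $b_2a_1a_3+b_1a_3+b_3=\tau$ are correct, and the computations of $\tilde{\sigma}_{12}$, $\tilde{\sigma}_{23}$, $\sigma_{24}$, $\sigma_{41}$ and $\sigma_{12}$ go through exactly as you describe.

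There is one slip in the step establishing $|\tilde{\sigma}_{31}|<\infty$. You need $\tilde{\sigma}>0$, and you propose to read this off from $\tilde{\delta}=\tilde{b}_2-\tilde{a}_2\tilde{\sigma}$. But in this proposition $c_{34}>0$, so $\tilde{b}_2>0$, and then $\tilde{\delta}>0$ with $\tilde{a}_2>0$ yields only $\tilde{\sigma}<\tilde{b}_2/\tilde{a}_2$, not a sign. The identity that does the job (and the one the paper uses) is the companion $\delta=b_2-a_2\sigma$: now $b_2=c_{43}/e_{41}<0$, so $\delta>0$ forces $\sigma<0$, and since $\tilde{\sigma}=-(e_{24}/e_{23})\sigma$ one obtains $\tilde{\sigma}>0$. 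This is precisely the mirror of the argument in Proposition \ref{c34_negative}, with the roles of $\delta$ and $\tilde{\delta}$ exchanged because it is now $b_2$, not $\tilde{b}_2$, that is negative.
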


\begin{figure}[!htb]
\centerline{
\includegraphics[width=0.3\textwidth]{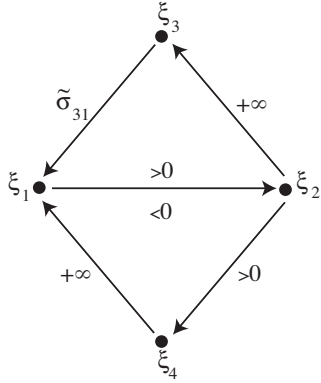}}
\caption{The $c$-indices for the network in Proposition \ref{c43_negative}. Stability index $\tilde{\sigma}_{31}$ is finite but has no predetermined sign.}\label{negative_c43}
\end{figure}

\begin{proof}
In this case, the stability indices for the $\xi_3$-cycle are as in Proposition \ref{KS_fig5}. 
For the $\xi_4$-cycle, given the imposed signs of the parameters, we are in case (iv)(b) of Lemma \ref{P&A_B3}. Since $b_1,b_2<0$, we have 
$$
\sigma_1=\sigma_{12}=f^{\textnormal{index}}(b_1+b_2a_1,1)=f^{\textnormal{index}}\left(-\frac{e_{23}}{e_{24}}+\frac{c_{43}c_{21}}{e_{41}e_{24}},1 \right)<0,
$$
where the inequality holds since $e_{23}>e_{24}$. The assumption that $|c_{43}|<e_{41}$ ensures that 
$$
\sigma_2=\tilde{\sigma}_{24}=-\frac{e_{41}}{c_{43}}-1 >0.
$$
Since we assume $\delta, \tilde{\delta} >0$, their expressions as functions of $\tilde{\sigma}$ and $\sigma$ impose $\tilde{\sigma}>0$ and $\sigma <0$. Then $\sigma_{41}=+\infty$ and $|\tilde{\sigma}_{31}| < \infty$.
\end{proof}

If $\tilde{\sigma}_{31} > 0$ then the $\xi_3$-cycle is again p.a.s., thus dominating the $\xi_4$-cycle within the network. Note however that $\tilde{\sigma}>1$ implies $\tilde{\sigma}_{31} < 0$, in which case each cycle has one stability index equal to $+\infty$, one positive and one negative. Neither cycle is p.a.s..

\medskip

In order to understand the relative stability of each cycle in the network and look for a stabilizing effect, 
we next calculate the stability indices with respect to the whole network in the cases where one of $c_{34}$ or $c_{43}$ is negative.
For $c_{34}<0$, an interesting case arises when we choose (see Proposition \ref{c34_negative})
$\tilde{\sigma}_{12} < 0$ and $\sigma_{41}>0$. This is the case when the cycles have, independently, the same collection of stability indices and neither is p.a.s.. In this case, as seen from the following theorem, the network is not p.a.s.. The negative value for the stability index along the common connection is preserved due to the existence of a positive transverse eigenvalue at $\xi_3$, preventing a stabilizing effect from taking place.

\begin{theorem}\label{c34_negative_network}
Under Assumptions 1 and 2, let $c_{34}<0$. The stability indices with respect to the network when $\tilde{\sigma}_{12} < 0$ and $\sigma_{41}>0$ are as follows:
$$
\tilde{\sigma}_{31}^n= \sigma_{24}^n=+\infty; \; \; \; \tilde{\sigma}_{23}^n, \sigma_{41}^n >0; \; \; \; \sigma_{12}^n <0.
$$
\end{theorem}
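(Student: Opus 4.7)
The proof splits into two parts. First, the immediate consequences of Lemma \ref{index_cycle-network}: applied to the $c$-indices from Proposition \ref{c34_negative}, the bound $\sigma^n(x)\geq \sigma^c(x)$ forces $\tilde\sigma^n_{31}=\sigma^n_{24}=+\infty$, since $\tilde\sigma_{31}=\sigma_{24}=+\infty$. The positivity of $\tilde\sigma_{23}$ (finite, equal to $-e_{31}/c_{34}-1$) and of $\sigma_{41}$ (by hypothesis) transfers to the respective $n$-indices. Only $\sigma^n_{12}$ requires further work.

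For the common connection, Lemma \ref{index_cycle-network} only yields $\sigma^n_{12}\geq \max\{\sigma_{12},\tilde\sigma_{12}\}>-\infty$, which leaves the sign undetermined, so a direct computation is needed, in the spirit of Lemma \ref{KS_fig5-network}. Fix $x\in[\xi_1\to\xi_2]$ and work in the transverse cross-section $H_1^{\inn}$. Because $\delta,\tilde\delta>0$, both return maps $h_1$ and $\tilde h_1$ are contractions, so the trichotomy argument for $\omega(x)$ used in the unnumbered lemma following Theorem \ref{local_index} implies, up to a Lebesgue-null set,
$$
\B(X)\cap B_\varepsilon(x) \subset \bigl(\dom(h_1)\cup\dom(\tilde h_1)\bigr)\cap B_\varepsilon(x).
$$
The two domains are essentially separated by the local stable manifolds at $\xi_2$ of the $\xi_3$- and $\xi_4$-directions, so up to lower-order terms they are disjoint.

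Next, I would bound each piece by a thin cusp. Because $\sigma_{12}=f^{\ind}(b_1,1)<0$ and $\tilde\sigma_{12}=f^{\ind}(\tilde b_1+\tilde b_2\tilde a_1,1)<0$, the analysis underlying Lemma \ref{index_calculation} shows that each of the two domains has the form $\{k\,y_1^\alpha \leq y_2\leq \hat k\,y_1^\alpha\}$ with exponent $\alpha>1$, hence Lebesgue measure of order $\varepsilon^{n-1+\alpha}$. Summing both contributions yields $\Sigma_\varepsilon(x)\leq C\varepsilon^{\gamma}$ for some $\gamma>0$; therefore $\sigma^n_-(x)\geq\gamma>0$ while $\sigma^n_+(x)=0$, so $\sigma^n_{12}<0$, with finiteness secured by the lower bound from Lemma \ref{index_cycle-network}.

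The principal obstacle is making rigorous that $\dom(h_1)\cup\dom(\tilde h_1)$ really captures $\B(X)\cap B_\varepsilon(x)$ up to a negligible set, i.e., that no additional basin contribution from orbits passing alternately through both cycles is missed, and that the two cusps do not overlap in a way that would lower the effective exponent. This parallels the case analysis of Lemma \ref{KS_fig5-network} (in particular lines 1 and 4), except that now both cusps contribute simultaneously; the positive transverse eigenvalue at $\xi_3$ induced by $c_{34}<0$ is precisely what keeps the $\xi_3$-side cusp thin and so blocks any stabilising effect along the common connection.
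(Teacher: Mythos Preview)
Your approach is essentially the same as the paper's: both identify that the local basin of the network along $[\xi_1\to\xi_2]$ is (up to measure zero) the union $\dom(h_1)\cup\dom(\tilde h_1)$, and that this union is thin in the cross-section, forcing $\sigma^n_{12}<0$. The paper does this by explicitly recomputing $\dom(\tilde h_1)$ after the sign change $c_{34}<0$: the extra requirement $\phi_{123}(\cdot)\in\dom(\phi_{231})$ shrinks it from $y<x^{e_{24}/e_{23}}$ to $y<x^{-(\tilde b_1+\tilde b_2\tilde a_1)}$, so the complement of the union is the explicit thick cusp $x^{-(\tilde b_1+\tilde b_2\tilde a_1)}<y<x^{e_{24}/e_{23}}$, and the hypothesis $\tilde\sigma_{12}<0$ is precisely $-(\tilde b_1+\tilde b_2\tilde a_1)>1$.

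You take a shortcut by reading the thinness of each domain directly off the negative $c$-indices. This is legitimate, since for contracting return maps the local basin of each cycle coincides with the corresponding domain, and the formulas $\sigma_{12}=f^{\ind}(b_1,1)$ and $\tilde\sigma_{12}=f^{\ind}(\tilde b_1+\tilde b_2\tilde a_1,1)$ encode exactly the exponents bounding those domains. One correction: the domains are not strips $\{k\,y_1^\alpha\leq y_2\leq\hat k\,y_1^\alpha\}$ as you write, but half-cusps $\{0\leq y\leq x^{\alpha}\}$ (for $\tilde h_1$, with $\alpha=-(\tilde b_1+\tilde b_2\tilde a_1)>1$) and $\{0\leq x\leq y^{\beta}\}$ (for $h_1$, with $\beta=e_{23}/e_{24}>1$). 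This does not affect your measure estimate, and it also dissolves your overlap concern: the two half-cusps sit on opposite sides of the diagonal and are disjoint. Your other stated obstacle---that switching orbits might be missed---is likewise resolved by observing that a point outside both domains leaves a neighbourhood of the network after passing $\xi_2$ or $\xi_3$, since there is no $[\xi_3\to\xi_4]$ connection. The paper additionally argues finiteness of $\tilde\sigma^n_{23}$ and $\sigma^n_{41}$, but as the statement only asserts positivity, your appeal to Lemma~\ref{index_cycle-network} suffices there.
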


\begin{proof}
Changing the sign of $c_{34}$ does not affect the results on the return maps in Lemma 2 of \cite{KS}: If $\delta>0$, the return maps around the $\xi_4$-cycle are contractions. And if $\tilde{\delta}>0$, the return maps around the $\xi_3$-cycle are contractions. Their domains of definition, however, do not remain unchanged. Now that $c_{34}<0$, the local map $\phi_{231}$ is defined only on a cusp given by $y<x^{-\frac{c_{34}}{e_{31}}}$. All other points near the trajectory from $\xi_2$ to $\xi_3$ leave the neighbourhood of the network in the transverse direction. This obviously affects $\dom(\tilde{h}_2)$, restricting it by the same inequality, which shows that $\tilde{\sigma}_{23}^n$ is finite.

The domains of definition of the other local maps remain the same. But the change in $\dom(\phi_{231})$ influences the domains of all return maps around the $\xi_3$-cycle. For $\tilde{h}_1$ we now need to make sure that $\phi_{123}(.)$ lands in $\dom(\phi_{231})$. This changes the restriction on $\dom(\tilde{h}_1)$ from $y<x^\frac{e_{24}}{e_{23}}$ to $y < x^{\frac{e_{24}}{e_{23}}-\frac{c_{21}c_{34}}{e_{23}e_{31}}}$. The domain of $\tilde{h}_3$ has to be modified in the same way. Thus, the domains of the return maps around the $\xi_3$-cycle become
\begin{eqnarray*}
\tilde{h}_1: &  & y < x^{\frac{e_{24}}{e_{23}}-\frac{c_{21}c_{34}}{e_{23}e_{31}}} = x^{-\left(\tilde{b}_1+\tilde{b}_2\tilde{a}_1\right)} \\
\tilde{h}_2: & & y < x^{-\frac{c_{34}}{e_{31}}} \\
\tilde{h}_3: & & y < x^{-\tilde{\tau}}.
\end{eqnarray*}
The domains of the maps around the $\xi_4$-cycle are the same as before.

Points in the complement of $\dom(\tilde{h}_1) \cup \dom(h_1)$ inside an $\varepsilon$-ball in $H_1^{\out,2}$ satisfy
\begin{align}
x^{-\left(\tilde{b}_1+\tilde{b}_2\tilde{a}_1\right)} < y < x^{\frac{e_{24}}{e_{23}}}. \label{domains}
\end{align}
From the definition of the stability index, we obtain $\sigma_-(x) > 0$ so that $\sigma_{12}^n < 0$, when $-\left(\tilde{b}_1+\tilde{b}_2\tilde{a}_1\right)>1$. This is precisely the case when $\tilde{\sigma}_{12}<0$.

The preimage of the set of points that satisfy (\ref{domains}) under $\phi_{412}$ has positive measure in any $\varepsilon$-neighbourhood in $H_4^{\out,1}$, so $\sigma_{41}^n$ is finite.
\end{proof}

In this instance, a feature appears that has not been observed before: the common connection in the network has  negative $c$- and $n$-index, meaning that many trajectories stop following the network at this point; the network is not p.a.s..

In fact, we notice an interesting feature about the way in which the network may fail to be predominantly asymptotically stable. The sign of the stability index along the common trajectory is determined by $-\left(\tilde{b}_1+\tilde{b}_2\tilde{a}_1\right) \lessgtr 1$. It is positive when $-\left(\tilde{b}_1+\tilde{b}_2\tilde{a}_1\right) < 1$ which is the same as \mbox{$0>c_{34} > \frac{e_{31}(e_{24}-e_{23})}{c_{21}}$}. This means, as long as $c_{34}$ is negative, but not too small, the network is p.a.s.. In this instance, the stabilizing effect is apparent. However, once $c_{34}$ becomes smaller than $\frac{e_{31}(e_{24}-e_{23})}{c_{21}}$, we have $\sigma_{12}^n<0$ and neither the $\xi_3$-cycle nor the network is p.a.s.\ anymore. Thus, cycle and network lose predominant stability through an increasing transverse eigenvalue at $\xi_3$ --- but the actual loss of trajectories occurs along the connection from $\xi_1$ to $\xi_2$.

We now also give the $n$-indices for $c_{43}<0$. Again we focus on the most competitive case where the cycles have qualitatively equal indices. In contrast with what we found for the case $c_{34}<0$, we observe the existence of parameter values ensuring p.a.s.\ of the network while neither cycle is p.a.s., the network providing a stabilizing effect.

\begin{theorem}\label{c43_negative_network}
Under Assumptions 1 and 2, let $c_{43}<0$. The stability indices with respect to the network when $\tilde{\sigma}_{31} < 0$ are as follows:
$$
\tilde{\sigma}_{23}^n= \sigma_{41}^n=+\infty; \; \; \; \sigma_{12}^n, \sigma_{24}^n >0; \; \; \; |\tilde{\sigma}_{31}^n| <\infty.
$$
\end{theorem}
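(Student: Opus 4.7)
The plan is to follow the template of Theorem~\ref{c34_negative_network} with the roles of the two cycles interchanged, but here the structural asymmetry makes the $n$-index along the common connection positive rather than negative. The hypothesis $\delta,\tilde\delta>0$ ensures, via Lemma~2 of \cite{KS}, that both sets of return maps are contractions. The negativity of $c_{43}$ means the local map $\phi_{241}$ is defined only on a cusp $\{y<x^{-c_{43}/e_{41}}\}$, which restricts $\dom(h_1)$, $\dom(h_2)$ and $\dom(h_3)$ around the $\xi_4$-cycle; the domains $\dom(\tilde h_j)$ around the $\xi_3$-cycle are unaffected because $c_{34}$ still has its natural sign.

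Four of the five claimed indices follow almost immediately: by Proposition~\ref{c43_negative}, $\sigma_{41}=\tilde\sigma_{23}=+\infty$, $\sigma_{24}>0$ and $\tilde\sigma_{12}>0$, so Lemma~\ref{index_cycle-network} applied to the appropriate cycle yields $\sigma_{41}^n=\tilde\sigma_{23}^n=+\infty$ and $\sigma_{24}^n\ge\sigma_{24}>0$. For the common connection, the same lemma applied to the $\xi_3$-cycle gives
$$\sigma_{12}^n\ge\tilde\sigma_{12}>0,$$
even though $\sigma_{12}<0$ as a $c$-index for the $\xi_4$-cycle. This is precisely the stabilizing effect: trajectories escaping the $\xi_4$-cycle transversely near the common connection are recaptured by the basin of the undisturbed $\xi_3$-cycle.

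The only genuine computation is the finiteness of $\tilde\sigma_{31}^n$. Lemma~\ref{index_cycle-network} already gives $\tilde\sigma_{31}^n\ge\tilde\sigma_{31}$, which is finite because the standing hypothesis $\tilde\sigma_{31}<0$ places us in the case $\tilde\sigma>1$. To bound $\tilde\sigma_{31}^n$ from above I would pull back the complement of $\dom(\tilde h_1)\cup\dom(h_1)$ under $\phi_{312}$ inside $H_1^{\inn,3}$; as in line~4 of the proof of Lemma~\ref{KS_fig5-network}, this produces a thin-cusp escape region bounded by two power curves with exponents involving $\tilde\sigma$ and the modified exponent $-(b_1+b_2a_1)$ coming from the restricted $\dom(h_1)$, giving a strictly positive $\sigma_-$ and hence $\tilde\sigma_{31}^n<+\infty$. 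The main obstacle is to verify that the cusp restriction from $c_{43}<0$ does not also slice into the $\xi_3$-cycle's basin along the common connection: the bound $\sigma_{12}^n\ge\tilde\sigma_{12}$ is sharp only because $\dom(\tilde h_1)$ is untouched, in sharp contrast with the situation of Theorem~\ref{c34_negative_network}, where the analogous restriction fell on the $\xi_3$-cycle's return map and forced $\sigma_{12}^n$ to be negative.
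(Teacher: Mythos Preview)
Your overall strategy coincides with the paper's: invoke Lemma~\ref{index_cycle-network} together with Proposition~\ref{c43_negative} for the four ``easy'' indices, then analyse the pullback of the complement of $\dom(\tilde h_1)\cup\dom(h_1)$ under $\phi_{312}$ for $\tilde\sigma_{31}^n$. The paper carries out exactly this computation, obtaining the two conditions $y<x^{\tilde\sigma}$ and $y^{\alpha}<x^{-\tau}$ with $\alpha:=b_1+b_2a_1=-\frac{e_{23}}{e_{24}}+\frac{c_{21}c_{43}}{e_{24}e_{41}}$, and then observes that whether the escape region is a thin or a thick cusp depends on $\alpha\lessgtr-\tau$; both possibilities are compatible with $\tilde\sigma>1$.

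There is, however, a genuine muddle in your treatment of $\tilde\sigma_{31}^n$. You assert that the pullback produces a ``thin-cusp escape region'' and that this gives ``a strictly positive $\sigma_-$''. These two statements are mutually inconsistent: a thin cusp for the \emph{escape} set means $\sigma_-=0$ and $\sigma_+>0$ (positive index), whereas $\sigma_->0$ would mean the escape set is \emph{thick} (negative index). More importantly, neither claim is correct across the whole parameter range: the paper shows explicitly that the sign of $\tilde\sigma_{31}^n$ is not determined --- it is positive when $\alpha>-\tau$ and negative when $\alpha<-\tau$. For the theorem as stated you only need finiteness, and for that the argument is simpler than you indicate: the lower bound $\tilde\sigma_{31}^n\ge\tilde\sigma_{31}>-\infty$ is Lemma~\ref{index_cycle-network}, while $\tilde\sigma_{31}^n<+\infty$ follows because the escape region is non-empty (it has positive measure in every $\varepsilon$-ball), so $\Sigma_\varepsilon<1$ and hence $\sigma_+<+\infty$. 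Your reference to line~4 of Lemma~\ref{KS_fig5-network} is misleading here, since in that lemma $\dom(h_1)$ was unmodified and the escape set collapsed to a single power curve; the present situation is genuinely two-sided.
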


\begin{proof}
That $\tilde{\sigma}_{23}^n= \sigma_{41}^n=+\infty$ and $\sigma_{12}^n, \sigma_{24}^n >0$ follows from Lemma \ref{index_cycle-network}. The latter two indices are not equal to $+\infty$ because the domains of the respective return maps exclude a cusp-shaped region of points that move away from the network. Determining the domains is analogous to the previous theorem, yielding
\begin{eqnarray*}
h_1: &  & x < y^{\frac{e_{23}}{e_{24}}-\frac{c_{21}c_{43}}{e_{24}e_{41}}} = y^{-(b_1+b_2a_1)} \\
h_2: & & y < x^{-\frac{c_{43}}{e_{41}}} \\
h_4: & & y < x^{-\tau}.
\end{eqnarray*}
We need to investigate what happens along the connection from $\xi_3$ to $\xi_1$. A point $(x,y) \in H_3^{\out,1}$ belongs to the basin of attraction of the network if and only if $\phi_{312}(x,y) = (x^{\frac{c_{13}}{e_{12}}},yx^{\frac{c_{14}}{e_{12}}}) \in \dom(\tilde{h}_1) \cup \dom(h_1)$, which is equivalent to
\begin{align*}
yx^{\frac{c_{14}}{e_{12}}} < x^{\frac{c_{13}e_{24}}{e_{12}e_{23}}} \quad &\vee \quad x^{\frac{c_{13}}{e_{12}}} < \left( yx^{\frac{c_{14}}{e_{12}}} \right)^{\frac{e_{23}}{e_{24}}-\frac{c_{21}c_{43}}{e_{24}e_{41}}}\\
 \text{and thus} \quad y<x^{\tilde{\sigma}} \quad &\vee \quad y^{-\frac{e_{23}}{e_{24}}+\frac{c_{21}c_{43}}{e_{24}e_{41}}}<x^{-\tau}.
 \end{align*}
The first condition describes the thin side of a cusp, since $\tilde{\sigma}>1$ was the condition for $\tilde{\sigma}_{31}<0$. Whether the second condition describes the thin or the thick side of a cusp depends on $\alpha:=-\frac{e_{23}}{e_{24}}+\frac{c_{21}c_{43}}{e_{24}e_{41}}  \lessgtr -\tau$. For $\alpha<-\tau$, we have $\tilde{\sigma}^n_{31}<0$. Note that 
$$
\alpha<-\tau \Leftrightarrow \alpha <-\frac{c_{13}}{e_{12}+c_{14}} 
$$
and both this inequality and its reverse are compatible with $\tilde{\sigma}>1$ which is equivalent to 
$$
-\frac{c_{13}}{e_{12}+c_{14}}  < -\frac{e_{23}}{e_{24}}.
$$
\end{proof}

The stabilizing effect is apparent for $c_{43}<0$, but not too small, when the network may be p.a.s., but once $c_{43}$ is so small that $\alpha<-\tau$, the network loses its stability along the trajectory $[\xi_3 \to \xi_1]$. In either case, none of the individual cycles is p.a.s..

It is straightforward to see from the calculations in the previous proof that, when $\alpha>-\tau$, most points, that follow the network from a neighbourhood of $[\xi_3 \rightarrow \xi_1]$, end up in $\dom(h_1)$, that is, they switch from the $\xi_3$- to the $\xi_4$-cycle. Analogous calculations show that points $(x,y) \in H_4^{\out,1}$ will follow the network if $\phi_{412}(x,y) = (yx^{\frac{c_{13}}{e_{12}}},x^{\frac{c_{14}}{e_{12}}}) \in \dom(\tilde{h}_1) \cup \dom(h_1)$. We have $\phi_{412}(x,y) \in \dom(\tilde{h}_1)$ if
$$
y > x^{\sigma}
$$
and $\phi_{412}(x,y) \in \dom(h_1)$ if
$$
y<x^{-\tau}.
$$
Since $\sigma < 0$ to have the stability indices in Theorem \ref{c43_negative_network}, almost all points coming  from $[\xi_4 \rightarrow \xi_1]$ will remain close to the network by following the $\xi_4$-cycle.

\subsection{The stabilizing mechanism}\label{stabilizing}

We now focus on the existence of what we have been referring to as a stabilizing mechanism or effect, for positive transverse eigenvalues, in the extreme situation when one cycle has all $c$-indices equal to $-\infty$ and neither cycle is p.a.s.. We consider case (ii) in Proposition \ref{KS_lemma3} with the sole change given by $c_{34}<0$. We thus have $\tilde{\delta}<0<\delta$. Note that we no longer need to have $\sigma<0$ (which we did when $c_{34}>0$) and so $\sigma_{41}$ can have any sign or be equal to $+\infty$. The value of $\sigma_{24}$ remains unchanged and equal to $+\infty$. Thus, $\sigma^n_{24}=+\infty$ as well.

The main challenge, in terms of calculations, is that return maps no longer are contractions. The following lemmas provide the possible information about the $n$-index for each connection. The proof is analogous for the first three lemmas so we present only the first one in detail. The quantities required for these results are given next for ease of reference:

$$
\begin{array}{lcl}
\gamma_n = \tilde{\rho}^n\alpha-\tilde{\nu}\sum_{k=0}^{n-1}\tilde{\rho}^k & \hspace{2cm} &
\bar{\gamma}_n = \tilde{\rho}^n\frac{e_{24}}{e_{23}}-\tilde{\nu}\sum_{k=0}^{n-1}\tilde{\rho}^k   \\
& & \\
\zeta_n = -\tilde{\tau} \sum_{k=0}^{n}\tilde{\rho}^k  &\hspace{2cm} &
\bar{\zeta}_n = \tilde{\rho}^n\tilde{\sigma} -\tilde{\tau} \sum_{k=0}^{n-1}\tilde{\rho}^k   \\
& & \\
\eta_n = -\frac{c_{34}}{e_{31}}\tilde{\rho}^n-\tilde{\delta} \sum_{k=0}^{n}\tilde{\rho}^k &\hspace{2cm} &
\bar{\eta}_n = -\tilde{\delta} \sum_{k=0}^n\tilde{\rho}^k \\
\end{array}
$$

\begin{lemma}\label{sigma12}
Under Assumption 1, if there exists an $n \in \N$ such that $\bar{\gamma}_n<1<\gamma_n$ then $\sigma_{12}^n<0$. Otherwise, $0<\sigma_{12}^n<+\infty$.
\end{lemma}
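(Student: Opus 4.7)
The plan is to apply Lemma \ref{index_calculation} in the cross-section $H_1^{\out,2}$ at a point on the common trajectory $[\xi_1 \to \xi_2]$, after first identifying those points whose orbit remains in a neighbourhood of the network. Since $\delta>0$, the return maps around the $\xi_4$-cycle are contractions, so every point of $\dom(h_1)$ lies in $\B(X)$. Since $\tilde\delta<0$, the return maps $\tilde h_1$ around the $\xi_3$-cycle are expansive, so an orbit starting in $\dom(\tilde h_1)$ remains near the network only so long as its successive iterates remain in $\dom(\tilde h_1)\cup\dom(h_1)$. Writing $D_k:=\dom(\tilde h_1^{\,k})$, a point $(x,y)\in H_1^{\out,2}$ near the connection lies in $\B(X)$ precisely when either some iterate $\tilde h_1^{\,k}(x,y)$ enters $\dom(h_1)$, or $(x,y)\in\bigcap_k D_k$ (shadowing the $\xi_3$-cycle).

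The next step is to compute $D_k$ by iterating $\tilde h_1$. From the explicit form of $\tilde h_1$ in Appendix B (a power map whose exponents involve $\tilde\rho$ and $\tilde\nu$), an induction on $k$ shows that $D_k$ is bounded by two power curves whose exponents are exactly $\gamma_k$ and $\bar\gamma_k$. More precisely, the set of points which avoid $\dom(h_1)$ and leave at iterate $k$ but not before is contained between $y\sim x^{\bar\gamma_k}$ and $y\sim x^{\gamma_k}$ (with the analogous interpretation for the other domain boundaries given by $\zeta_k,\bar\zeta_k,\eta_k,\bar\eta_k$, which are needed only to bound the lower-order terms). The complement of $\B(X)$ in a small ball around $(x,y)$ is thus a countable union $\bigcup_{k\in\N}\E_k$ of sets of the form required by Lemma \ref{index_calculation}, indexed by the iterate at which escape occurs.

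Finally, I would feed these domains into Lemma \ref{index_calculation}. The index $\sigma_{12}^n$ is negative exactly when some $\E_k$ is ``fat'' at the origin, i.e.\ it is bounded by an exponent in $(0,1)$. The hypothesis $\bar\gamma_n<1<\gamma_n$ means exactly that one of the two boundary curves of $\E_n$ crosses the critical exponent~$1$, producing a wedge of positive density; this forces $\sigma_-(x)>0$ and hence $\sigma_{12}^n<0$. Conversely, if no such $n$ exists, then for every $k$ both $\gamma_k$ and $\bar\gamma_k$ lie outside $(0,1)$ (either $\leq 0$ or $\geq 1$), so each $\E_k$ is a thin cusp; Lemma \ref{index_calculation} then gives $\sigma_{12}^n>0$. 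That $\sigma_{12}^n<+\infty$ follows from the presence of the (finite, negative-$c$-index) cusps inherited from $\dom(h_1)^c$ already in the $k=0$ term, which prevent $\Sigma_\varepsilon(x)$ from reaching $1$.

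The principal obstacle is establishing the recursion for the exponents bounding $D_k$ and tracking which side of the cusp each boundary corresponds to as one iterates the expansive map $\tilde h_1$; one must verify that the geometric factors $\tilde\rho^k$ together with the partial sums $\sum_{j=0}^{k-1}\tilde\rho^j$ assemble exactly into the quantities $\gamma_k,\bar\gamma_k$ as defined, and that the contribution of all $k$ beyond the first violator (if any) of the condition $\bar\gamma_k<1<\gamma_k$ does not alter the sign of the index. Once the iterate formulas for the domain boundaries are in hand, Lemma \ref{index_calculation} does the rest.
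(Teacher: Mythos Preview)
Your approach is essentially the paper's: identify the escape set $\E_0=(\dom(h_1)\cup\dom(\tilde h_1))^c$ in $H_1^{\out,2}$, pull it back under iterates of $\tilde h_1$ to obtain regions $\E_n$ bounded by power curves with exponents $\gamma_n$ and $\bar\gamma_n$, and read off the sign of the index from whether any such cusp straddles the exponent~$1$. The paper adds one ingredient you omit: it computes $\gamma_{n+1}-\gamma_n$ and $\bar\gamma_{n+1}-\bar\gamma_n$ explicitly and shows both sequences are monotonically increasing (and unbounded), which is what keeps the exponents bounded away from~$1$ as required by Lemma~\ref{index_calculation} and what makes the dichotomy clean.

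Two small corrections. First, the quantities $\zeta_k,\bar\zeta_k,\eta_k,\bar\eta_k$ play no role in this lemma; they govern the connections $[\xi_3\to\xi_1]$ and $[\xi_2\to\xi_3]$ and appear only in Lemmas~\ref{sigma31} and~\ref{sigma23}. Second, your claim that in the ``otherwise'' case both $\gamma_k$ and $\bar\gamma_k$ lie \emph{outside} $(0,1)$ is incorrect: since $\bar\gamma_0=e_{24}/e_{23}\in(0,1)$, early terms of both sequences typically lie in $(0,1)$. The correct condition for $\E_k$ to be a thin cusp is that $\gamma_k$ and $\bar\gamma_k$ lie on the \emph{same side} of~$1$ (both ${<}1$ or both ${>}1$), not that they avoid the interval $(0,1)$.
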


\begin{proof}
Consider the set
\begin{align*}
\E_0&=(\dom(h_1) \cup \dom(\tilde{h}_1))^c \subset H_1^{\out,2},
\end{align*}
describing the points along the connection $[\xi_1 \rightarrow \xi_2]$ which are removed from a neighbourhood of the network.
The domains are given by the following inequalities, where $\alpha=\frac{e_{24}}{e_{23}}-\frac{c_{21}c_{34}}{e_{23}e_{31}}>0$
\begin{align*}
&\tilde{h}_1: \qquad y<x^\alpha\\
&h_1: \qquad x<y^{\frac{e_{23}}{e_{24}}}
\end{align*}
This gives
\begin{align*}
\E_0=\{(x,y) \mid x^\alpha < y < x^\frac{e_{24}}{e_{23}}   \}
\end{align*}
Note that $\frac{e_{24}}{e_{23}}<1$ and $\frac{e_{24}}{e_{23}}<\alpha$.
Define the preimages, describing points which leave a neighbourhood of the network after a finite number of iterates,
$$
\E_n:=\tilde{h}_1^{-n}(\E_0)
$$
which can be described by 
$$
\E_n=\{(x,y) \mid x^{\gamma_n} < y < x^{\bar{\gamma}_n} \}
$$
with $\gamma_n$ and $\bar{\gamma}_n$ as above. Both these sequences are monotonically increasing as can be seen by the fact that 
\begin{align*}
\gamma_{n+1}-\gamma_n&=(\tilde{\rho}^{n+1}-\tilde{\rho}^n)\alpha - \tilde{\nu}\tilde{\rho}^n\\
&=\tilde{\rho}^n \left( (\tilde{\rho}-1)\alpha-\frac{e_{24}}{e_{23}}(\tilde{\rho}-1)-\frac{c_{21}}{e_{23}}\tilde{\delta} \right)\\
&=\tilde{\rho}^n \left( (\tilde{\rho}-1)(\alpha-\frac{e_{24}}{e_{23}})-\frac{c_{21}}{e_{23}}\tilde{\delta} \right)\\
&=\tilde{\rho}^n \left( (\tilde{\rho}-1)(-\frac{c_{21}c_{34}}{e_{23}e_{31}})-\frac{c_{21}}{e_{23}}\tilde{\delta} \right)\\
&>0,
\end{align*}
and analogously for $\bar{\gamma}_{n+1} - \bar{\gamma}_n = -\frac{c_{21}}{e_{23}}\tilde{\delta}\tilde{\rho}^n >0$.

As long as either $\gamma_n, \bar{\gamma}_n<1$ or $\gamma_n, \bar{\gamma}_n>1$ the set $\E_n$ is a thin cusp-shaped region, and we get $\sigma_{12}^n>0$ by the standard argument from Lemma \ref{index_calculation}, which we use in these cases. However, if there is an $n \in \N$ such that $\bar{\gamma}_n<1<\gamma_n$, then $\E_n$ is a thick cusp, and we get $\sigma_{12}^n<0$ by similar arguments as above. 
\end{proof}

Note that a sufficient condition for $\sigma_{12}^n<0$ is that $\alpha > 1$ in which case we have $\bar{\gamma}_0 < 1 < \gamma_0$.

\begin{lemma}\label{sigma31}
If there exists an $n \in \N$ such that $\bar{\zeta}_n<1<\zeta_n$ then $\tilde{\sigma}_{31}^n<0$. Otherwise, $0<\tilde{\sigma}_{31}^n <+\infty$.
\end{lemma}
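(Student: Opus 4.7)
The plan is to adapt the argument of Lemma \ref{sigma12} verbatim, but now working at the cross-section $H_3^{\out,1}$ along the common connection $[\xi_3\to\xi_1]$ and iterating the $\xi_3$-return map $\tilde h_3$, whose inverse is a contraction since $\tilde\delta<0$. I would first identify the initial escape set
\[
\E_0=\{(x,y)\in H_3^{\out,1}\mid \phi_{312}(x,y)\notin\dom(\tilde h_1)\cup\dom(h_1)\}.
\]
Pulling back both domains through $\phi_{312}$ --- and using that $c_{34}<0$ modifies $\dom(\tilde h_1)$ but leaves $\dom(h_1)$ untouched --- each boundary is a power curve. A short computation shows that the condition $\phi_{312}(x,y)\notin\dom(\tilde h_1)$ is equivalent to $y\geq x^{-\tilde\tau}$ (i.e.\ to $(x,y)\notin\dom(\tilde h_3)$, which is why $-\tilde\tau$ appears), while $\phi_{312}(x,y)\notin\dom(h_1)$ becomes $y\leq x^{\tilde\sigma}$. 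Hence $\E_0=\{(x,y):x^{-\tilde\tau}<y<x^{\tilde\sigma}\}$.

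Next I would set $\E_n:=\tilde h_3^{-n}(\E_0)$, which describes the points that survive $n$ passes around the $\xi_3$-cycle before being ejected. Using the explicit local maps to read off how $\tilde h_3^{-1}$ acts on any power curve $y=x^\gamma$, one sees that the new exponent is $\tilde\rho\gamma-\tilde\tau$. Applying this common recursion $f_n=\tilde\rho f_{n-1}-\tilde\tau$ to the two initial exponents $\zeta_0=-\tilde\tau$ and $\bar\zeta_0=\tilde\sigma$ gives exactly the closed forms stated before the lemma, so that
\[
\E_n=\{(x,y):x^{\zeta_n}<y<x^{\bar\zeta_n}\}.
\]
Both sequences are strictly monotone increasing and unbounded --- the same type of calculation as in the proof of Lemma \ref{sigma12} works, the key point again being $\tilde\rho>1$, and the sign of the correction is now controlled by $\tilde\tau$ rather than $\tilde\nu$.

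Finally I would apply Lemma \ref{index_calculation} to the disjoint family $\{\E_n\}_{n\in\N}$, which together exhaust the complement of $\B(X)$ in a neighbourhood of any point on $[\xi_3\to\xi_1]$. If every $\E_n$ is a thin cusp, i.e.\ $\zeta_n$ and $\bar\zeta_n$ lie on the same side of $1$ for every $n$, then the standard summation yields $\sigma_-=0$ and $\sigma_+>0$, hence $0<\tilde\sigma^n_{31}<+\infty$. If on the other hand some $n\in\N$ satisfies $\bar\zeta_n<1<\zeta_n$, then that single $\E_n$ is a thick cusp whose measure is comparable to $\ell(B_\varepsilon(x))$, forcing $\sigma_->0$ and therefore $\tilde\sigma^n_{31}<0$.

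The part I expect to be most delicate is verifying the recursion $\gamma\mapsto\tilde\rho\gamma-\tilde\tau$: not because it is hard but because the two boundaries of $\E_0$ come from genuinely different mechanisms (leaving the $\xi_3$-cycle versus failing to switch to the $\xi_4$-cycle) and one needs to check that $\tilde h_3^{-1}$ really does act uniformly on both power curves under the modification of $\dom(\tilde h_1)$ imposed by $c_{34}<0$. A secondary but necessary check is that the signs of $\tilde\tau$ and $\tilde\sigma$ are consistent with $\E_0$ being nonempty (otherwise one falls into a trivial subcase where $\tilde\sigma^n_{31}=+\infty$).
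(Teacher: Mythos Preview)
Your proposal is correct and follows essentially the same route as the paper's proof: you identify the escape set $\F_0=\{(x,y)\in H_3^{\out,1}:x^{-\tilde\tau}<y<x^{\tilde\sigma}\}$ by pulling back $\dom(\tilde h_1)\cup\dom(h_1)$ through $\phi_{312}$, then iterate $\tilde h_3^{-1}$ to obtain the sequences $\zeta_n,\bar\zeta_n$ and apply the thin/thick cusp dichotomy exactly as in Lemma \ref{sigma12}. The paper's own argument is terser but identical in substance; your remarks about the recursion $\gamma\mapsto\tilde\rho\gamma-\tilde\tau$ and the nonemptiness check $\tilde\sigma<-\tilde\tau$ (ensured by $c_{34}<0$) are precisely the points the paper verifies.
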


\begin{proof}
The points in $H_3^{\out,1}$ that do not stay near the network are those in
\begin{align*}
\F_0&=\{(x,y) \mid \phi_{312}(x,y) = (x^{\frac{c_{13}}{e_{12}}},yx^{\frac{c_{14}}{e_{12}}}) \notin \dom(\tilde{h}_1) \cup \dom(h_1)   \} \\
&=\{(x,y) \mid     x^{\alpha \frac{c_{13}}{e_{12}}} < yx^{\frac{c_{14}}{e_{12}}}  < x^{\frac{e_{24}}{e_{23}} \frac{c_{13}}{e_{12}}}      \}\\
&=\{(x,y) \mid     x^{\alpha \frac{c_{13}}{e_{12}} - \frac{c_{14}}{e_{12}}} < y  < x^{\frac{e_{24}}{e_{23}} \frac{c_{13}}{e_{12}} - \frac{c_{14}}{e_{12}}}      \}\\
&=\{(x,y) \mid     x^{-\tilde{\tau}} < y  < x^{\tilde{\sigma}}      \}
\end{align*}
Note that $c_{34}<0$ ensures $\tilde{\sigma}<-\tilde{\tau}$ so that $\F_0$ is non-empty.
The preimages of $\F_0$ under the return map $\tilde{h}_3$ are given by
$$
\zeta_n=-\tilde{\tau} \sum_{k=0}^{n}\tilde{\rho}^k \quad \text{and} \quad \bar{\zeta}_n= \tilde{\rho}^n\tilde{\sigma} -\tilde{\tau} \sum_{k=0}^{n-1}\tilde{\rho}^k
$$
where $\zeta_n$ and $\bar{\zeta}_n$ are as above. These sequences are again monotonically increasing and the result follows.
\end{proof}

\begin{lemma}\label{sigma23}
If there exists an $n \in \N$ such that $\bar{\eta}_n<1<\eta_n$ then $\tilde{\sigma}_{23}^n<0$. Otherwise, $0<\tilde{\sigma}_{23}^n <+\infty$.
\end{lemma}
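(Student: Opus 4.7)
The plan is to follow the template of Lemmas \ref{sigma12} and \ref{sigma31}. First I would identify the set $\F_0 \subset H_2^{\out,3}$ of points along $[\xi_2 \rightarrow \xi_3]$ that leave a neighbourhood of the network before completing a full return. Since this cross-section lies only on the $\xi_3$-cycle, the only return map here is $\tilde{h}_2$, but with $c_{34}<0$ the local map $\phi_{231}$ already has a restricted cusp-shaped domain, and on top of that one needs $\phi_{231}(x,y) \in \dom(\tilde{h}_3)$. Combining these two conditions (using the formulas and domains from Appendix B) I expect to obtain $\F_0 = \{ x^{\eta_0} < y < x^{\bar{\eta}_0} \}$ with $\eta_0 = -c_{34}/e_{31} - \tilde{\delta}$ and $\bar{\eta}_0 = -\tilde{\delta}$, matching the $n=0$ values of the sequences displayed before the lemma.

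Next I would compute the preimages $\F_n := \tilde{h}_2^{-n}(\F_0)$, corresponding to points that leave the neighbourhood only after $n$ returns. Using the explicit expression of $\tilde{h}_2$, an induction on $n$ shows that $\F_n = \{ x^{\eta_n} < y < x^{\bar{\eta}_n} \}$, with both boundary exponents satisfying the common recursion $\beta_n = \tilde{\rho}\,\beta_{n-1} - \tilde{\delta}$. A short computation using $\tilde{\rho}>1$ (Assumption 1), $\tilde{\delta}<0$ (the standing hypothesis of Subsection \ref{stabilizing}) and $c_{34}<0$ then gives $\bar{\eta}_{n+1}-\bar{\eta}_n = -\tilde{\delta}\tilde{\rho}^{n+1} > 0$ and $\eta_{n+1}-\eta_n = -(c_{34}/e_{31})\tilde{\rho}^n(\tilde{\rho}-1) - \tilde{\delta}\tilde{\rho}^{n+1} > 0$, together with $\eta_n - \bar{\eta}_n = -(c_{34}/e_{31})\tilde{\rho}^n > 0$. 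Hence both sequences are strictly increasing and ordered $\bar{\eta}_n < \eta_n$.

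Finally I would apply the cusp analysis of Lemma \ref{index_calculation} to the escape set $\bigcup_n \F_n$. When both $\eta_n$ and $\bar{\eta}_n$ lie on the same side of $1$ for every $n$, each $\F_n$ is a thin cusp and the standard measure estimate yields $0 < \tilde{\sigma}^n_{23} < +\infty$; the finiteness comes from the fact that $\F_0$ is non-empty. If instead there is an $n$ with $\bar{\eta}_n < 1 < \eta_n$, the corresponding $\F_n$ is a ``thick'' region whose measure is comparable to that of the ambient ball, forcing $\sigma_-(x)>0$ and hence $\tilde{\sigma}^n_{23}<0$. The hard part will be the clean identification of $\F_0$ and the verification of the induction step --- in particular, keeping track of how the exponents combine through the composition defining $\tilde{h}_2$; once the recursion $\beta_n = \tilde{\rho}\beta_{n-1} - \tilde{\delta}$ is confirmed, the monotonicity is automatic and the cusp analysis runs identically to that of Lemmas \ref{sigma12} and \ref{sigma31}.
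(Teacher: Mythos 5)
Your overall template --- identify an initial escape set in $H_2^{\out,3}$, pull it back under iterates of $\tilde{h}_2$, check monotonicity of the boundary exponents, and apply the cusp analysis of Lemma \ref{index_calculation} --- is exactly the paper's, and your recursion $\beta_n=\tilde{\rho}\beta_{n-1}-\tilde{\delta}$ together with the resulting monotonicity of both exponent sequences is correct. The gap is in the identification of the initial set. Since $\tilde{\sigma}_{23}^n$ is an index with respect to the \emph{network}, a point on $[\xi_2\to\xi_3]$ escapes only if, after passing $\xi_3$ and $\xi_1$, it lands in the complement of $\dom(\tilde{h}_1)\cup\dom(h_1)$ in $H_1^{\out,2}$: points landing in $\dom(h_1)$ switch to the $\xi_4$-cycle, whose return maps are contractions because $\delta>0$, and hence are attracted to the network. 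The paper accordingly sets
\begin{equation*}
\G_0=\bigl\{(x,y)\in\dom(\phi_{231}) \mid \phi_{312}\circ\phi_{231}(x,y)\notin\dom(\tilde{h}_1)\cup\dom(h_1)\bigr\}
=\bigl\{x^{-\tilde{\delta}-\frac{c_{34}}{e_{31}}\tilde{\rho}}<y<x^{-\tilde{\delta}}\bigr\},
\end{equation*}
where the upper boundary $y<x^{-\tilde{\delta}}$ comes precisely from the condition of \emph{not} lying in $\dom(h_1)$.

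Your conditions --- membership in $\dom(\phi_{231})$ plus $\phi_{231}(x,y)\in\dom(\tilde{h}_3)$ --- encode only continuation around the $\xi_3$-cycle and never invoke $h_1$. The second of them does reproduce the correct lower boundary (the modified $\dom(\tilde{h}_3)=\{y<x^{-\tilde{\tau}}\}$ is exactly the $\phi_{231}$-pullback of $\dom(\tilde{h}_1)$, and one computes $-\tilde{\tau}\frac{c_{32}}{e_{31}}-\frac{c_{34}}{e_{31}}=-\tilde{\delta}-\frac{c_{34}}{e_{31}}\tilde{\rho}$), but the first yields the upper boundary $y<x^{-c_{34}/e_{31}}$ rather than $y<x^{-\tilde{\delta}}$, so your set wrongly counts as escaping the points that in fact switch to the $\xi_4$-cycle. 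Because this switching is the entire content of the stabilizing mechanism, and because the exponent $-\tilde{\delta}$ you assign to $\bar{\eta}_0$ is exactly the one produced by the $h_1$-condition you omit, the derivation as stated does not actually produce the sequences $(\eta_n,\bar{\eta}_n)$ that you then analyse; you have matched them to the displayed formulas rather than derived them. Once $\F_0$ is replaced by the set $\G_0$ above, the rest of your argument (preimages under $\tilde{h}_2$, monotonicity, thin versus thick cusp) goes through as in the paper.
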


\begin{proof}
The set of points in $H_2^{\out,3}$ that do not remain close to the network is
\begin{align*}
\G_0&=\{(x,y) \in \dom(\phi_{231}) \mid \phi_{312} \circ \phi_{231}(x,y)  \notin \dom(\tilde{h}_1) \cup \dom(h_1)   \} \\
&=\{(x,y) \in \dom(\phi_{231}) \mid (x^{\frac{c_{32}c_{13}}{e_{31}e_{12}}},yx^{\frac{c_{34}}{e_{31}} + \frac{c_{32}c_{14}}{e_{31}e_{12}}}) \notin \dom(\tilde{h}_1) \cup \dom(h_1)   \} \\
&=\{(x,y) \in \dom(\phi_{231}) \mid     x^{\alpha \frac{c_{32}c_{13}}{e_{31}e_{12}}} < yx^{\frac{c_{34}}{e_{31}} + \frac{c_{32}c_{14}}{e_{31}e_{12}}}  < x^{\frac{e_{24}}{e_{23}} \frac{c_{32}c_{13}}{e_{31}e_{12}}}      \}\\
&=\{(x,y) \in \dom(\phi_{231}) \mid     x^{-\tilde{\delta} - \frac{c_{34}}{e_{31}} \tilde{\rho}} < y  < x^{-\tilde{\delta}}      \}.
\end{align*}
Preimages under iteration of $\tilde{h}_2(x,y)=(x^{\tilde{\rho}},yx^{\tilde{\delta}})$ are of the form 
$$
\tilde{h}_2^{-n}(\G_0)=\{(x,y) \in \dom(\phi_{231}) \mid     x^{\eta_n} < y  < x^{\bar{\eta}_n}      \}
$$
with $\eta_n$ and $\bar{\eta}_n$ as above. Both sequences increase monotonically concluding the proof.
\end{proof}

\begin{lemma}\label{sigma41}
If $\sigma <0$ then $\tilde{\sigma}_{41}^n=+\infty$. If $\sigma \in (0,1)$ then $0<\tilde{\sigma}_{41}^n <+\infty$. If $\sigma >1$ then $\tilde{\sigma}_{41}^n<0$.
\end{lemma}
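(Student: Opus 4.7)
The overall strategy parallels Lemmas \ref{sigma12}--\ref{sigma23}: I characterize the basin of attraction of the network at $H_4^{\text{out},1}$ by pulling back, via the local map $\phi_{412}$, the basin at $H_1^{\text{out},2}$ that was already analyzed in Lemma \ref{sigma12}, and then estimate measures using Lemma \ref{index_calculation}. The crucial observation is that $\delta > 0$ makes $h_1$ a contraction, so $\dom(h_1)=\{y > x^{e_{24}/e_{23}}\}$ lies entirely in the basin at $H_1^{\text{out},2}$. A short calculation with $\phi_{412}(x,y)=(yx^{c_{13}/e_{12}},\,x^{c_{14}/e_{12}})$ and the definition of $\sigma$ yields
$$
\phi_{412}^{-1}(\dom(h_1))=\{(x,y)\in H_4^{\text{out},1} : y < x^\sigma\}.
$$

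If $\sigma<0$, then $x^\sigma>1$ throughout a small neighbourhood of the origin, so this preimage already exhausts $B_\varepsilon$; hence $\Sigma_\varepsilon\equiv 1$ and $\tilde{\sigma}_{41}^n=+\infty$. If $\sigma\in(0,1)$, the set $\{y<x^\sigma\}$ is the thick side of a cusp, so the basin complement is confined to the thin cusp $\{y>x^\sigma\}$ of measure $\sim\varepsilon^{1+1/\sigma}$. To match this with a lower bound I pull the immediate-escape set $\E_0=\{x^\alpha<y<x^{e_{24}/e_{23}}\}$ back by $\phi_{412}$, obtaining
$$
\phi_{412}^{-1}(\E_0)=\{x^\sigma < y < x^{\beta_0}\}, \qquad \beta_0=\frac{c_{14}-\alpha c_{13}}{\alpha e_{12}} < \sigma,
$$
whose dominant boundary exponent is $\sigma$. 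Applying Lemma \ref{index_calculation} with $\alpha_{\max}=\sigma$ gives $\tilde{\sigma}_{41}^n=1/\sigma-1\in(0,+\infty)$.

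The case $\sigma>1$ is the hardest step and requires the most care. Now $\{y<x^\sigma\}$ is itself a thin cusp of measure $\sim\varepsilon^{1+\sigma}$, providing only a thin lower bound on the basin. To obtain a matching upper bound, I need to show that the remaining candidate region $\phi_{412}^{-1}(\dom(\tilde{h}_1))=\{y>x^{\beta_0}\}$ contributes at most another thin cusp. Here I exploit the fact that $\tilde{\delta}<0$ forces all $c$-indices of the $\xi_3$-cycle to be $-\infty$ (Lemma \ref{minus_infinity}): within $\{y>x^{\beta_0}\}$, the iteration scheme of $\tilde{h}_1$ (exactly the $\gamma_n,\bar{\gamma}_n$ bookkeeping used in Lemma \ref{sigma12}, transported through $\phi_{412}$) produces preimages $\tilde{h}_1^{-n}(\E_0)$ whose union dominates the region in measure, so only a thin cusp of survivors is eventually absorbed into $\dom(h_1)$. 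Combining the two thin contributions gives $\Sigma_\varepsilon\sim\varepsilon^{\sigma-1}$, so $\sigma_-=\sigma-1$, $\sigma_+=0$, and $\tilde{\sigma}_{41}^n=1-\sigma<0$. The main technical obstacle is precisely this last step: ruling out a thick slab of late survivors from the non-contracting $\xi_3$-cycle, which requires the iteration argument to be run uniformly in the pulled-back coordinates on $H_4^{\text{out},1}$.
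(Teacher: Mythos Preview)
Your approach begins along the same lines as the paper, but you miss the decisive simplification: the exponent $\beta_0=(c_{14}-\alpha c_{13})/(\alpha e_{12})=\tilde\tau/\alpha$ is \emph{negative} under the standing hypotheses of this subsection ($c_{34}<0$, $\tilde\delta<0<\delta$). Indeed $\alpha>0$, and $\tilde\tau<0$ follows from the two identities
\[
\tilde\tau=\tilde\sigma(\tilde\rho-1)+\tfrac{c_{13}c_{21}}{e_{12}e_{23}}\tilde\delta
\qquad\text{and}\qquad
\tilde\tau=-\tilde\sigma+\tfrac{c_{34}}{c_{32}}\tilde\rho,
\]
since $\tilde\tau>0$ would force $\tilde\sigma>0$ from the first (as $\tilde\delta<0$) and $\tilde\sigma<0$ from the second (as $c_{34}<0$). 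Because $\beta_0<0$ we have $x^{\beta_0}>1$ for small $x$, so $\phi_{412}^{-1}(\dom(\tilde h_1))=\{y>x^{\beta_0}\}$ is empty in every $\varepsilon$-neighbourhood. Hence the escape set at $H_4^{\out,1}$ is simply $H_0=\{y>x^\sigma\}$, and since $h_4$ is a contraction on its domain $\{y<x^\sigma\}$ (as $\delta>0$), no further preimages enter. The three cases for $\sigma$ then follow by inspection.

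This makes your iteration argument for $\sigma>1$ unnecessary, and as stated it also has a genuine gap. The fact that all $c$-indices of the $\xi_3$-cycle equal $-\infty$ concerns attraction to that \emph{cycle}, not to the \emph{network}; it does not by itself imply that only a thin cusp in $\{y>x^{\beta_0}\}$ eventually reaches $\dom(h_1)$. In fact Lemma~\ref{sigma12} shows that the network-basin at $H_1^{\out,2}$ can be thick or thin depending on whether some $\bar\gamma_n<1<\gamma_n$, so if $\beta_0$ were positive the sign of $\tilde\sigma_{41}^n$ would in general not be determined by $\sigma$ alone. The clean trichotomy in the statement is possible precisely because $\beta_0<0$ eliminates the $\xi_3$-route from $H_4^{\out,1}$ altogether.
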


\begin{proof}
We want to look at
\begin{align*}
H_0&= \{(x,y) \mid \phi_{412}(x,y)=(yx^{\frac{c_{13}}{e_{12}}},x^{\frac{c_{14}}{e_{12}}}) \notin  \dom(\tilde{h}_1) \cup \dom(h_1)     \}\\
&= \{(x,y) \mid y^{\alpha}x^{\alpha \frac{c_{13}}{e_{12}}}   <  x^{\frac{c_{14}}{e_{12}}}  <   y^{\frac{e_{24}}{e_{23}}}x^{\frac{e_{24}}{e_{23}}\frac{c_{13}}{e_{12}}}  \}\\
&=\{ (x,y) \mid x^\sigma < y < x^\beta    \}
\end{align*}
where
$$
\beta= -\frac{1}{\alpha}\left(\alpha \frac{c_{13}}{e_{12}}-\frac{c_{14}}{e_{12}} \right)=\frac{1}{\alpha}\frac{c_{14}}{e_{12}} - \frac{c_{13}}{e_{12}}=\frac{\tilde{\tau}}{\alpha}.
$$
Since $\tilde{\tau}<0<\alpha$, we have $\beta<0$.\footnote{The fact that $\tilde{\tau}<0$ follows from the two ways of writing it: as $\tilde{\tau}=\tilde{\sigma}(\tilde{\rho}-1)+\frac{c_{13}c_{21}}{e_{12}e_{23}}\tilde{\delta}$ or as $\tilde{\tau} = -\tilde{\sigma}+\frac{c_{34}}{c_{32}}\tilde{\rho}$. If $\tilde{\tau}$ were positive then the first expression would imply $\tilde{\sigma}>0$, while the second would give $\tilde{\sigma}<0$.} Therefore, it does not place a restriction on the intersection of $H_0$ with small neighbourhoods. Since $h_4$ is a contraction $(\delta>0)$ we do not have to worry about preimages. Thus, following our earlier reasoning we get
\begin{itemize}
\item[(a)] If $\sigma<0$, then $H_0=\emptyset$ and $\sigma_{41}^n=+\infty$;
\item[(b)] If $\sigma \in (0,1)$, then $H_0$ is a thin cusp and $\sigma_{41}^n>0$ (but finite);
\item[(c)] If $\sigma > 1$, then $H_0$ is a thick cusp and $\sigma_{41}^n<0$.
\end{itemize}
\end{proof}

Notice that the sign of $\sigma_{41}^n$ depends on $\sigma$ in the same way as that of $\sigma_{41}$ in Proposition \ref{KS_fig5}.

We can now describe under which conditions the stabilizing effect of joining the cycles in a network is apparent to produce a p.a.s.\ network with two positive transverse eigenvalues.

\begin{proposition}
Under Assumption 1 and assuming $-c_{34}<e_{31}$, all $n$-indices are positive provided
$$
\sigma < \mbox{\textnormal{min}}\left\{ -\frac{e_{23}}{e_{24}}, -\frac{e_{23}}{e_{24}c_{32}}(e_{31}+c_{34})\right\}
$$
and
$$
1-\frac{c_{21}}{e_{23}} < \frac{e_{24}}{e_{23}} <1-\frac{c_{21}}{e_{23}}\frac{-c_{34}}{e_{31}}.
$$
\end{proposition}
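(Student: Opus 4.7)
My plan is to verify each of the five $n$-indices $\sigma_{12}^n, \sigma_{24}^n, \sigma_{41}^n, \tilde{\sigma}_{23}^n, \tilde{\sigma}_{31}^n$ is positive by invoking Lemmas \ref{sigma12}--\ref{sigma41} in turn. I anticipate that the enabling step is a pair of algebraic identities linking the expression $\sigma$ in the hypotheses to the quantities $\tilde{\sigma}$ and $\tilde{\delta}$ which govern the sequences in Lemmas \ref{sigma23} and \ref{sigma31}. I expect $\sigma = -(e_{23}/e_{24})\tilde{\sigma}$ and $\tilde{\delta} = (c_{34} - c_{32}\tilde{\sigma})/e_{31}$, so that the two-part condition on $\sigma$ rewrites cleanly as $\tilde{\sigma} > 1$ (from $\sigma < -e_{23}/e_{24}$) and $-\tilde{\delta} > 1$ (from $\sigma < -e_{23}(e_{31}+c_{34})/(e_{24}c_{32})$). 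Both identities should follow from unfolding the definitions in Section \ref{B3B3_network}.

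Two indices are then immediate. Proposition \ref{KS_lemma3}, case (ii), gives $\sigma_{24} = +\infty$ for the isolated $\xi_4$-cycle, so Lemma \ref{index_cycle-network} yields $\sigma_{24}^n = +\infty$. The first hypothesis entails $\sigma < 0$, so Lemma \ref{sigma41} gives $\sigma_{41}^n = +\infty$.

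For $\tilde{\sigma}_{31}^n$ and $\tilde{\sigma}_{23}^n$, the expected translation gives $\bar{\zeta}_0 = \tilde{\sigma} > 1$ and $\bar{\eta}_0 = -\tilde{\delta} > 1$. Since both sequences were shown in the proofs of Lemmas \ref{sigma31} and \ref{sigma23} to be strictly increasing, they stay above $1$ for every $n$; no straddling of $1$ occurs, and both $n$-indices are positive. For $\sigma_{12}^n$ I apply Lemma \ref{sigma12}: the upper half of the second hypothesis is equivalent to $\gamma_0 = \alpha < 1$, and the first increment of the $\bar{\gamma}$-sequence is $\bar{\gamma}_1 - \bar{\gamma}_0 = -c_{21}\tilde{\delta}/e_{23} > c_{21}/e_{23}$ by the translated first hypothesis; combining this with the lower half of the second hypothesis, $c_{21} > e_{23} - e_{24}$, gives $\bar{\gamma}_1 > 1$, so monotonicity of $\bar{\gamma}_n$ yields $\bar{\gamma}_n > 1$ for all $n \geq 1$, ruling out straddling at any $n$.

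I expect the main obstacle to be the algebraic verification of the two identities linking $\sigma$, $\tilde{\sigma}$ and $\tilde{\delta}$ to the underlying eigenvalues; once these are in hand, the rest of the argument is a fairly mechanical application of the four lemmas. A secondary consistency check is that $\tilde{\delta} < 0$, required by the setup of Section \ref{stabilizing}, is automatic from $-\tilde{\delta} > 1$.
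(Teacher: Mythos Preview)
Your proposal is correct and follows essentially the same route as the paper: verifying each $n$-index via Lemmas \ref{sigma12}--\ref{sigma41} by showing that the relevant sequences never straddle $1$, using $\tilde{\sigma}>1$ for $\tilde{\sigma}_{31}^n$, $-\tilde{\delta}>1$ for $\tilde{\sigma}_{23}^n$, and $\alpha<1$ together with $\bar{\gamma}_1>1$ for $\sigma_{12}^n$. Your front-loading of the identities $\sigma=-(e_{23}/e_{24})\tilde{\sigma}$ and $\tilde{\delta}=(c_{34}-c_{32}\tilde{\sigma})/e_{31}$, and your use of the increment $\bar{\gamma}_1-\bar{\gamma}_0=-c_{21}\tilde{\delta}/e_{23}>c_{21}/e_{23}$, make the algebra a touch cleaner than the paper's inline computations, but the logic is identical; the case $\sigma_{24}^n=+\infty$ is dealt with in the paper's text immediately preceding the Proposition exactly as you do it.
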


\begin{proof}
A sufficient condition for $\tilde{\sigma}^n_{31}>0$ is $\tilde{\sigma}>1$. Then it is always $\zeta_n,\bar{\zeta}_n>1$. Note that $\tilde{\sigma}>1$ is equivalent to 
$$
\sigma<-\frac{e_{23}}{e_{24}}.
$$
Note that this condition is satisfied only if $\sigma <0$ in which case $\sigma_{41}=\sigma^n_{41} = +\infty$.

A sufficient condition for $\tilde{\sigma}^n_{23}>0$ is $\bar{\eta}_0=-\tilde{\delta}>1$. This is equivalent to
\begin{eqnarray*}
-\frac{c_{34}}{e_{31}}-\sigma \frac{c_{32}e_{24}}{e_{23}e_{31}} > 1 & \Leftrightarrow & \\
\Leftrightarrow \sigma \frac{c_{32}e_{24}}{e_{23}e_{31}} < -1 -\frac{c_{34}}{e_{31}} & \Leftrightarrow & \\
\Leftrightarrow \sigma < -\frac{e_{23}}{c_{32}e_{24}}(e_{31}+c_{34})
\end{eqnarray*}
The last inequality is compatible with the upper bound for $\sigma$.

A sufficient condition for $\tilde{\sigma}^n_{12}>0$ is that $\alpha<1$, providing the last inequality in the statement of the theorem, and $\bar{\gamma}_1>1$. We have, using the last upper bound for $\sigma$,
$$
\bar{\gamma}_1= \frac{e_{24}}{e_{23}}-\frac{c_{21}c_{34}}{e_{23}e_{31}}-\sigma\frac{c_{21}c_{34}c_{32}}{e_{23}^2e_{31}}
 > \frac{e_{24}}{e_{23}}+\frac{c_{21}}{e_{23}}
$$
and 
$$
\frac{e_{24}}{e_{23}}+\frac{c_{21}}{e_{23}} > 1 \Leftrightarrow 1-\frac{c_{21}}{e_{23}}< \frac{e_{24}}{e_{23}}.
$$
\end{proof}

\section{Stability of each cycle {\em versus} stability of the network}\label{stability_network} 

We have seen that it is possible to join two cycles to form a p.a.s.\ network in the following stability-related ways:
\begin{itemize}
	\item[(a)]  one p.a.s.\ and one non-p.a.s.\ cycle;
	\item[(b)]  two non-p.a.s.\ cycles.
\end{itemize}
We have encountered two instances of case (a): first, in the situation described by Proposition \ref{KS_fig5} and Lemma \ref{KS_fig5-network}, and then again, in case (i) of both Proposition \ref{KS_lemma3} and Lemma \ref{KS_lemma3-network}, where the non-p.a.s.\ cycle has all $c$-indices equal to $-\infty$. In case (ii) of both Proposition \ref{KS_lemma3} and Lemma \ref{KS_lemma3-network} we find an instance of (b). Another example illustrating case (b) appears in Proposition \ref{c34_negative} and Theorem \ref{c34_negative_network}. Case (b) illustrates a stabilizing effect of joining cycles into networks which appears common: even though none of the cycles is very stable (p.a.s.), the network as a whole attracts most points in its neighbourhood. This does not always happen, as can be seen from Theorem \ref{c43_negative_network}: with the choice made in the statement, neither cycle is p.a.s.; the network is not p.a.s.\ either for an open subset in parameter space.

As the next theorem demonstrates, a non-p.a.s.\ network can also be obtained by joining a p.a.s.\ cycle with a non-p.a.s.\ cycle. Thus, we have established the existence of all possible stability combinations for the two cycles and the network.

\begin{theorem}
A non-p.a.s.\ network may be constructed from a p.a.s.\ cycle and a non-p.a.s.\ cycle.
\end{theorem}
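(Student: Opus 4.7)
The plan is to exhibit an explicit example in the $(B_3^-,B_3^-)$-network framework of Section \ref{B3B3_network} with a weak positive transverse eigenvalue at $\xi_3$, building on the analysis of subsection \ref{positive-transverse}. Under Assumption 1 and Assumption 2 with $c_{34}<0$ (as in Proposition \ref{c34_negative}), I impose two extra conditions: first, $\alpha:=e_{24}/e_{23}-c_{21}c_{34}/(e_{23}e_{31})<1$, under which Proposition \ref{c34_negative} and the explicit formula for $\tilde\sigma_{12}$ in its proof give $\tilde\sigma_{12}>0$; combined with the automatic $\tilde\sigma_{23}>0$ and $\tilde\sigma_{31}=+\infty$, this makes the $\xi_3$-cycle p.a.s. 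Second, $\sigma>1$ produces $\sigma_{41}<0$ for the $\xi_4$-cycle, so this cycle is not p.a.s.

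To show that the network itself is not p.a.s., I compute $\sigma_{41}^n$ by a pullback argument in the spirit of Theorem \ref{c34_negative_network}. With $c_{34}<0$ the complement of $\dom(\tilde h_1)\cup\dom(h_1)$ inside $H_1^{\out,2}$ is the cusp $\{x^\alpha\le y\le x^{e_{24}/e_{23}}\}$, whose preimage in $H_4^{\out,1}$ under $\phi_{412}(x,y)=(yx^{c_{13}/e_{12}},x^{c_{14}/e_{12}})$ is the region $\{x^\sigma\le y\le x^\mu\}$ with $\mu:=(c_{14}-\alpha c_{13})/(\alpha e_{12})$. The key extra hypothesis is $\mu<1$, which combined with $\sigma>1$ means the escaping region is not a cusp but occupies a positive fraction of any $\varepsilon$-ball; decomposing the ball along $x=\varepsilon^{1/\mu}$ and integrating, the basin measure is of order $\varepsilon^{\min(\sigma+1,\,1+1/\mu)}$, so $\Sigma_\varepsilon\to 0$ and $\sigma_{41}^n=-\min(\sigma-1,\,1/\mu-1)<0$. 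By Theorem \ref{local_index} the network is not p.a.s.

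To see that the conditions $\alpha<1$, $\sigma>1$, $\mu<1$ are jointly compatible with Assumptions 1 and 2, I verify the explicit numerical example $e_{12}=e_{24}=e_{31}=e_{41}=1$, $e_{23}=2$, $c_{21}=c_{13}=1$, $c_{14}=11/10$, $c_{32}=3$, $c_{34}=-1/2$, $c_{42}=1$, $c_{43}=4$, for which direct substitution yields $\alpha=3/4$, $\mu=7/15$, $\sigma=6/5$, $\tilde\sigma=-3/5$, $\rho=11/10>1$, $\tilde\rho=3/2>1$, and $\delta,\tilde\delta,\tau,\tilde\tau$ all positive. The main obstacle is the measure estimate for $\sigma_{41}^n$: because the escaping region does not have identical exponents on its two boundaries, Lemma \ref{index_calculation} does not apply directly, and one must instead integrate over the ball after splitting it along $x=\varepsilon^{1/\mu}$ to determine which of the two basin contributions dominates and to confirm that the resulting $\sigma_{41}^n$ is strictly negative rather than zero.
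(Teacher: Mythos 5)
Your proof is correct and follows essentially the same route as the paper's: work in the $c_{34}<0$ regime of Proposition \ref{c34_negative}, take $\alpha=-(\tilde{b}_1+\tilde{b}_2\tilde{a}_1)\in(0,1)$ so that the $\xi_3$-cycle is p.a.s., take $\sigma>1$ so that $\sigma_{41}<0$, and pull the escape strip $x^{\alpha}<y<x^{e_{24}/e_{23}}$ in $H_1^{\out,2}$ back through $\phi_{412}$ to get $\sigma_{41}^n<0$. The one substantive difference is the final sufficient condition, and there your version is the right one. The pulled-back escape region is $\{x^{\sigma}<y<x^{\mu}\}$ with $\mu=\tilde{\tau}/\alpha$, and it occupies asymptotically full measure in small balls precisely when $\mu<1<\sigma$, because $\{y<x^{\mu}\}$ is the thick side of a cusp if and only if $\mu<1$. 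The paper instead requires $-\tilde{\tau}\,(-\alpha)^{-1}=\mu>1$, i.e.\ $-\tilde{\tau}<-\alpha$; with that choice $\{y<x^{\mu}\}$ is a thin cusp, the escape region has measure $O(\varepsilon^{1+\mu})=o(\varepsilon^2)$, and the computation would yield $\sigma_{41}^n>0$ rather than $<0$. So the printed inequality in the paper's proof appears to be reversed, and your condition $\tilde{\tau}<\alpha$ is the one that actually makes the argument close. Your explicit numerical witness is also a genuine addition (the paper only asserts compatibility of its inequality with $\tilde{\tau}>0$); I checked that it gives $\rho=11/10$, $\tilde{\rho}=3/2$, $\delta=2.8$, $\tilde{\delta}=1.3$, $\tau=3.2$, $\tilde{\tau}=0.35$, $\alpha=3/4$, $\sigma=6/5$, $\mu=7/15$ as claimed. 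Two minor remarks: your example has $c_{43}=4>e_{41}=1$, which violates the letter of Assumption 2 but not its intent, since that bound only serves to control a positive transverse eigenvalue at $\xi_4$ and here $-c_{43}<0$; and your first-return analysis gives an upper bound on the basin measure, which suffices for $\sigma_{41}^n<0$, though the exact value $-\min(\sigma-1,1/\mu-1)$ additionally uses that no further measure is lost at later returns (true here because the relevant return maps are contractions).
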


\begin{proof}
We find a network that is not p.a.s.\ even though one of its cycles is. Therefore, we look at the case $c_{34}<0$ as in Proposition \ref{c34_negative}. We choose
\begin{itemize}
\item $\tilde{b}_1+\tilde{b}_2\tilde{a}_1 \in (-1,0)$, such that $\tilde{\sigma}_{12}^n \geq \tilde{\sigma}_{12} >0$,
\item $\sigma > 1$, such that $\sigma_{41}<0$ by Proposition \ref{KS_fig5}.
\end{itemize}
The other stability indices are $\sigma_{24}=\tilde{\sigma}_{31}=+\infty$ and $\tilde{\sigma}_{23}>0$. So the $\xi_3$-cycle is p.a.s.\ and we need to find conditions such that $\sigma_{41}^n<0$.

From the calculations in the proof of Theorem \ref{c34_negative_network} we know that the complement of $\dom(h_1) \cup \dom(\tilde{h}_1)$ in $H_1^{\out,2}$ is given by
\begin{align}\label{complement}
x^{-\left(\tilde{b}_1+\tilde{b}_2\tilde{a}_1\right)}<y<x^{\frac{e_{24}}{e_{23}}}
\end{align}
So a sufficient condition for $\sigma^n_{41}<0$ is that the preimage of (\ref{complement}) under $\phi_{412}$ is the thick side of a cusp in $H_4^{\out,1}$. With $\phi_{412}(x,y)=(x^{\frac{c_{13}}{e_{12}}}y,x^{\frac{c_{14}}{e_{12}}})$ we find that $\phi_{412}(x,y)$ fulfilling (\ref{complement}) is equivalent to
\begin{align*}
x^{-\frac{c_{13}}{e_{12}}\left(\tilde{b}_1+\tilde{b}_2\tilde{a}_1\right)}y^{-\left(\tilde{b}_1+\tilde{b}_2\tilde{a}_1\right)}<x^{\frac{c_{14}}{e_{12}}}<x^{\frac{c_{13}e_{24}}{e_{12}e_{23}}}y^{\frac{e_{24}}{e_{23}}}\\
\Leftrightarrow x^{-\frac{c_{13}}{e_{12}}({-\frac{e_{24}}{e_{23}}+\frac{c_{34}c_{21}}{e_{31}e_{23}}})-\frac{c_{14}}{e_{12}}}<y^{-\frac{e_{24}}{e_{23}}+\frac{c_{34}c_{21}}{e_{31}e_{23}}} \quad \wedge \quad x^{\sigma}<y\\
\Leftrightarrow x^{-\tilde{\tau}} < y^{-\frac{e_{24}}{e_{23}}+\frac{c_{34}c_{21}}{e_{31}e_{23}}} \quad \wedge \quad x^{\sigma}<y
\end{align*}
Because of $\sigma>1$ the second condition describes the thick side of a cusp. The same is true for the first one if $-\tilde{\tau}(-\frac{e_{24}}{e_{23}}+\frac{c_{34}c_{21}}{e_{31}e_{23}})^{-1}>1$. This is the same as
\begin{align*}
-\tilde{\tau}<-\frac{e_{24}}{e_{23}}+\frac{c_{34}c_{21}}{e_{31}e_{23}}
\end{align*}
Both sides of the inequality are less than zero and this is compatible with $\tilde{\tau}>0$.
\end{proof}

We thus conclude with the observation that, even though the local stability index for heteroclinic connections is a useful tool, there is still a lot to be learnt about stability of heteroclinic networks.

\paragraph{Acknowledgements:}

The first author benefitted from financial support from the European Regional Development Fund through the programme COMPETE and from  the Portuguese Government through the Funda\c c\~ao para a Ci\^encia e a Tecnologia (FCT) under the project PEst-C/MAT/UI0144/2011. This research was partly carried out during a visit of the second author to the Centro de Matem\'atica da Universidade do Porto (CMUP) whose hospitality is gratefully acknowledged.

The second author thanks Reiner Lauterbach and Olga Podvigina for helpful discussions and email correspondence. Financial support of his aforementioned visit to Porto through the Nachwuchsfonds of the Mathematics Department at the University of Hamburg is also gratefully appreciated.

\appendix
\section{A simple $(B_2^+,B_2^+)$-network}
We construct a network with two $B_2^+$-cycles and provide the set-up for the study of the dynamics near the network by defining local and global maps.

Consider the finite Lie group $\Z_2^3$ generated by the following elements of order $2$:
\begin{eqnarray*}
\kappa_2 . (x_1,x_2,x_3,x_4) & = & (x_1,-x_2,x_3,x_4) \\
\kappa_3 . (x_1,x_2,x_3,x_4) & = & (x_1,x_2,-x_3,x_4) \\
\kappa_4 . (x_1,x_2,x_3,x_4) & = & (x_1,x_2,x_3,-x_4).
\end{eqnarray*}
It is easily seen that, for $i \neq j$, Fix$(\langle \kappa_i, \kappa_j \rangle)$ is a two-dimensional space of the form
$$
P_{1k} = \{ (x_1,x_2,x_3,x_4) \in \R^4: \; x_i=x_j=0, \; k \neq i,j \}.
$$
We further have Fix$(\langle \kappa_2,\kappa_3,\kappa_4 \rangle)=L_1=\{ (x_1,0,0,0): \; x_1 \in \R \}$. Let $f$ be a vector field equivariant under this group action. Then, when restricted to one of the invariant planes, the vector field has the form (we write the equations in $P_{12}$ for concreteness):
$$
\left\{ \begin{array}{l}
\dot{x_1} = a_1x_1+b_1(x_1^2+x_2^2)+c_1x_1^3 \\
\dot{x_2} = a_2x_2 +b_2(x_1^2+x_2^2)x_2 + d_1x_1x_2
\end{array} \right. .
$$

The origin is always an equilibrium. Assume $a_2b_2>0$ so that there are no equilibria on the $x_2$-axis. Assume further that $b_1^2-4a_1c_1 >0$ so that there are two equilibria, other than the origin, on the $x_1$-axis. Set $c_1<0$ and label these $\xi_a$ and $\xi_b$, where the first coordinate of $\xi_a$ is negative and the first coordinate of $\xi_b$ is positive. Choose $a_1,a_2>0$ so that the origin is a source and the remaining coefficients so that $\xi_a$ is a saddle and $\xi_b$ is a sink in $P_{12}$. In $P_{13}$ and $P_{14}$ coefficients can be chosen so that $\xi_a$ is a sink and $\xi_b$ a saddle.

We thus obtain a heteroclinic cycle made of three connections as follows:
$$
[\xi_{a} \rightarrow \xi_{b}]  \mbox{ in  } P_{12} ; \; \; \;
[\xi_{b} \rightarrow \xi_{a}]  \mbox{ in  } P_{13}  ; \; \; \; 
[\xi_{b} \rightarrow \xi_{a}]  \mbox{ in  } P_{14} .
$$
There are two cycles: $C_3 = [\xi_{a} \rightarrow \xi_{b}\rightarrow \xi_{a}] \subset P_{12} \cup P_{13}$ and 
$C_4 = [\xi_{a} \rightarrow \xi_{b}\rightarrow \xi_{a}] \subset P_{12} \cup P_{14}$. These correspond to the $\xi_3$- and $\xi_4$-cycles of \cite{KS}.

\begin{figure}[!htb]
\centerline{
\includegraphics[width=1.0\textwidth]{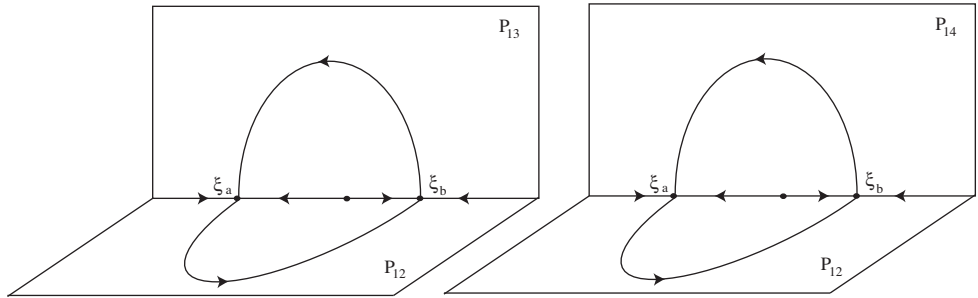}}
\caption{The $B_2^+$-cycles in the network.}\label{cyclesB2}
\end{figure}

\paragraph{Dynamics near the network of $B_2^+$-cycles:} In a way analogous to that used by Kirk and Silber \cite{KS}, we use the linearization at each equilibrium on the network to define local maps. Global maps are defined as small perturbations of the identity, conditioning the domain of definition of the return maps around each cycle in the network.

Near $\xi_a$ the local maps are defined for points in an incoming section to the flow approaching $\xi_a$, $H_{ai}^{\inn}$ for $i=3,4$, with image in an outgoing section to the flow leaving $\xi_a$, $H_{a2}^{\out}$. Linearize the flow near $\xi_a$ to obtain
$$
\left\{ \begin{array}{l}
\dot{x_1} = -r_ax_1 \\
\dot{x_2} = e_{a2}x_2 \\
\dot{x_3} = -c_{a3}x_3 \\
\dot{x_4} = -c_{a4}x_4 ,
\end{array} \right. 
$$
where all the constants are positive.

Near $\xi_b$, the local maps are analogously defined but now we have two outgoing sections, $H_{b3}^{\out}$ and $H_{b4}^{\out}$, and one incoming section, $H_{b2}^{\inn}$. Linearization of the flow near $\xi_b$ provides
$$
\left\{ \begin{array}{l}
\dot{x_1} = -r_bx_1 \\
\dot{x_2} = -c_{b2}x_2 \\
\dot{x_3} = e_{b3}x_3 \\
\dot{x_4} = e_{b4}x_4 ,
\end{array} \right. 
$$
where again all the constants are positive.
Assume from now on, and without loss of generality, that $e_{b3}>e_{b4}$.

The coordinates for the sections to the flow are as follows:
\begin{eqnarray*}
H_{a2}^{\out} = H_{b2}^{\inn} & = & \{ (x_1,1,x_3,x_4) \} \\
H_{a3}^{\inn} = H_{b3}^{\out} & = & \{ (x_1,x_2,1,x_4) \} \\
H_{a4}^{\inn} = H_{b4}^{\out} & = & \{ (x_1,x_2,x_3,1) \} .
\end{eqnarray*}

The standard construction of the local maps using the linearized flow gives the following:
\begin{eqnarray*}
\varphi_{a3}: H_{a3}^{\inn} \to H_{a2}^{\out}, \quad \varphi_{a3}(x_1,x_2,1,x_4) & = & \Big(x_1 x_2^{\frac{r_a}{e_{a2}}}, 1, x_2^{\frac{c_{a3}}{e_{a2}}}, x_4 x_2^{\frac{c_{a4}}{e_{a2}}}  \Big) \\
\varphi_{b3}: H_{b2}^{\inn} \to H_{b3}^{\out}, \quad \varphi_{b3}(x_1,1,x_3,x_4) & = & \Big(x_1 x_3^{\frac{r_b}{e_{b3}}}, x_3^{\frac{c_{b2}}{e_{b3}}}, 1, x_4 x_3^{-\frac{e_{b4}}{e_{b3}}} \Big) \\
\varphi_{a4}: H_{a4}^{\inn} \to H_{a2}^{\out}, \quad \varphi_{a4}(x_1,x_2,x_3,1) & = & \Big(x_1 x_2^{\frac{r_a}{e_{a2}}}, 1, x_3 x_2^{\frac{c_{a3}}{e_{a2}}}, x_2^{\frac{c_{a4}}{e_{a2}}} \Big) \\
\varphi_{b4}: H_{b2}^{\inn} \to H_{b3}^{\out}, \quad \varphi_{b4}(x_1,1,x_3,x_4) & = & \Big(x_1 x_4^{\frac{r_b}{e_{b4}}}, x_4^{\frac{c_{b2}}{e_{b4}}}, x_3 x_4^{-\frac{e_{b3}}{e_{b4}}}, 1 \Big)
\end{eqnarray*}
The domain of definition of the maps $\varphi_{b3}$ and $\varphi_{b4}$ is, respectively, constrained by the inequalities
$$
(1-\epsilon)x_3^{\frac{e_{b4}}{e_{b3}}} > x_4 \geq 0 \; \; \mbox{   and     }\; \; \;  (1-\epsilon)x_4^{\frac{e_{b3}}{e_{b4}}} > x_3 \geq 0,
$$
in the respective (local) coordinates. By composing these local maps with global maps analogous to those used by \cite{KS}, without resorting to polar coordinates however, we obtain four return maps, one for each connection belonging to each cycle. These are, for $C_3$,
\begin{eqnarray*}
g_{3a} & : & H_{a3}^{\inn} \rightarrow H_{a3}^{\inn} \\
g_{3b} & : & H_{b2}^{\inn} \rightarrow H_{b2}^{\inn}
\end{eqnarray*}
and, for $C_4$,
\begin{eqnarray*}
g_{4a} & : & H_{a4}^{\inn} \rightarrow H_{a4}^{\inn} \\
g_{4b} & : & H_{b2}^{\inn} \rightarrow H_{b2}^{\inn}.
\end{eqnarray*}

The return maps are given by:
$$
g_{3a}(x_1,x_2,1,x_4) = \Big(A_1 x_1 x_2^{\frac{r_a}{e_{a2}}+\frac{c_{a3}r_b}{e_{a2}e_{b3}}}, B_1 x_2^{\tilde{\rho}},1, C_1 x_4 x_2^{\tilde{\delta}}\Big), 
$$
with $0 \leq x_4 < k_1^{3a} (1-\epsilon)x_2^{\frac{c_{a3}}{e_{a2}}(\frac{e_{b4}}{e_{b3}}-\frac{c_{a4}}{c_{a3}})}$;
$$
g_{3b}(x_1,1,x_3,x_4) = \Big(A_2 x_1 x_3^{\frac{r_b}{e_{b3}}+\frac{c_{b2}r_a}{e_{a2}e_{b3}}}, 1, B_2 x_3^{\tilde{\rho}}, C_2 x_4 x_3^{\frac{e_{b4}}{e_{b3}}(\rho-1)}\Big), 
$$
with $0 \leq x_4 < k_2^{3a}(1-\epsilon)x_3^{\frac{e_{b4}}{e_{b3}}}$;
$$
g_{4a}(x_1,x_2,x_3,1) = \Big(D_1 x_1 x_2^{\frac{r_a}{e_{a2}}+\frac{c_{a4}r_b}{e_{a2}e_{b4}}}, E_1 x_2^{\rho}, x_3 x_2^{\delta}, 1 \Big),
$$
with $0 \leq x_3 < k_2^{3a}(1-\epsilon)x_2^{\frac{c_{a4}}{e_{a2}}(\frac{e_{b3}}{e_{b4}}-\frac{c_{a3}}{c_{a4}})}$;
$$
g_{4b}(x_1,1,x_3,x_4) = \Big(D_2 x_1 x_4^{\frac{r_b}{e_{b4}}+\frac{c_{b2}r_a}{e_{b4}e_{a2}}}, 1, E_2 x_3 x_4^{\frac{e_{b3}}{e_{b4}}(\tilde{\rho}-1)}, F_2 x_4^{\rho}\Big),
$$
with $0 \leq x_3 < k_2^{3a}(1-\epsilon)x_4^{\frac{e_{b3}}{e_{b4}}}$,
where
\begin{align*}
\rho&:=\frac{c_{a4}c_{b2}}{e_{a2}e_{b4}}, \quad \delta:=\frac{c_{a3}}{e_{a2}}-\frac{e_{b3}c_{a4}}{e_{a2}e_{b4}}\\
\tilde{\rho}&:=\frac{c_{a3}c_{b2}}{e_{a2}e_{b3}}, \quad \tilde{\delta}:=\frac{c_{a4}}{e_{a2}}-\frac{e_{b4}c_{a3}}{e_{a2}e_{b3}}.
\end{align*}

Notice that $\delta \tilde{\delta} <0$.

\paragraph{Stability indices:}  In the terminology of Lemma \ref{P&A_B2} we have $\tilde{\rho}=\tilde{a}_1\tilde{a}_2$ for $C_3$ and $\rho=a_1a_2$ for $C_4$, where again we use $\; \tilde{\mbox{}} \;$ to distinguish between the ratios of eigenvalues for the respective cycles. Since we do not want all indices along one of the cycles to be equal to $-\infty$, from now on we assume $\rho,\tilde{\rho}>1$. Note that precisely one of $\tilde{\delta}$ and $\delta$ is positive. In Theorem \ref{stab_index_B2} we give the stability indices for cases \mbox{(i) $\delta<0$ ($\Rightarrow \tilde{\delta}>0$)} and \mbox{(ii) $\delta>0$ ($\Rightarrow \tilde{\delta}<0$)}.

Subscripts indicate the direction of the connection and the cycle: $\sigma_{ij,3}$ for the connection in $C_3$ and the stability index relative only to this cycle, whereas we write $\sigma_{ij,3}^{n}$ for the stability index along the same connection but now calculated for the network.
Note that, when calculating the stability index of the common connection with respect to the network, we have $\sigma_{ab,3}^n=\sigma_{ab,4}^n$. In this case, we use $\sigma_{ab}^n$.

\begin{theorem}\label{stab_index_B2}
Generically, the stability indices for connecting trajectories in the network are 
$$
\mbox{\bf{Case (i) ($\delta<0$):} } \;\; \sigma_{ab,4}=\sigma_{ba,4}=-\infty, \; \; \; \sigma_{ab,3}>0, \; \; \; \sigma_{ba,3}=+\infty;
$$
$$
\sigma_{ab}^n,\sigma_{ba,4}^n>0, \; \; \; \sigma_{ba,3}^n=+\infty.
$$
$$
\mbox{\bf{Case (ii) ($\delta>0$):} } \;\; \sigma_{ab,4}<0, \; \;\; \sigma_{ba,4}=+\infty, \; \; \; \sigma_{ab,3}=\sigma_{ba,3}=-\infty;
$$
$$
\sigma_{ab}^n,\sigma_{ba,3}^n>0, \; \; \; \sigma_{ba,4}^n=+\infty.
$$
\end{theorem}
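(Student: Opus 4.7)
The plan is to first reduce each cycle's stability analysis to Lemma \ref{P&A_B2} by reading off the Podvigina-Ashwin parameters directly from the return maps $g_{3a}, g_{3b}, g_{4a}, g_{4b}$, and then handle the $n$-indices by separating the entries provided immediately by Lemma \ref{index_cycle-network} from those that require the cusp calculation of Lemma \ref{index_calculation}.

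Identifying the parameters proceeds by inspection. Comparing the second and fourth coordinates of $g_{3a}$ and $g_{3b}$ with the hypotheses of Lemma \ref{P&A_B2} gives, for $C_3$, $\tilde{a}_1\tilde{a}_2=\tilde{\rho}$ and $\tilde{b}_1\tilde{a}_2+\tilde{b}_2=\tilde{\delta}$ with $\tilde{b}_1=-e_{b4}/e_{b3}\in(-1,0)$; symmetrically $a_1a_2=\rho$, $b_1a_2+b_2=\delta$ and $b_1=-e_{b3}/e_{b4}<-1$ for $C_4$, where the strict bounds use $e_{b3}>e_{b4}$. Since $\delta\tilde{\delta}<0$ always, exactly one of the two cycles satisfies the hypothesis of Lemma \ref{P&A_B2}(iii)(b) while the other falls into (iii)(a). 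In Case (i) this gives $\sigma_{ab,4}=\sigma_{ba,4}=-\infty$, $\sigma_{ba,3}=+\infty$ and $\sigma_{ab,3}=f^{\textnormal{index}}(-e_{b4}/e_{b3},1)=e_{b3}/e_{b4}-1>0$. In Case (ii) the roles swap and the transverse ratio has $b_1<-1$, so $\sigma_{ab,4}=f^{\textnormal{index}}(-e_{b3}/e_{b4},1)=1-e_{b3}/e_{b4}<0$, with $\sigma_{ba,4}=+\infty$ and $\sigma_{ab,3}=\sigma_{ba,3}=-\infty$.

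For the $n$-indices, the infinite entries $\sigma_{ba,3}^n=+\infty$ in Case (i) and $\sigma_{ba,4}^n=+\infty$ in Case (ii) follow at once from Lemma \ref{index_cycle-network}. The remaining two entries $\sigma_{ab}^n$ and the ``return'' index of the cycle with all $c$-indices equal to $-\infty$ are proved positive by the method of Lemma \ref{KS_fig5-network}. In Case (i), $\tilde{\delta}>0$ makes the $C_3$-return maps contractions, so on a small cross-section to the common connection the set of points that escape the network equals the complement of $\dom(g_{3b})\cup\dom(g_{4b})$ together with its countable family of preimages under the contracting $C_3$-maps. The domain constraints $x_4<k_1(1-\epsilon)x_3^{e_{b4}/e_{b3}}$ and $x_3<k_2(1-\epsilon)x_4^{e_{b3}/e_{b4}}$ force this complement to be a thin cusp with exponent $e_{b3}/e_{b4}$ bounded away from $1$, and Lemma \ref{index_calculation} then delivers $\sigma_{ab}^n>0$ and finite. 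For $\sigma_{ba,4}^n$ one pulls this cusp back through the global map on $H_{b4}^{\out}$ and the local map $\varphi_{a4}$, which acts as $y\mapsto y^{c_{a4}/e_{a2}}$ in the relevant coordinate and therefore sends cusps to cusps with a transformed exponent. Case (ii) is the mirror image under $C_3\leftrightarrow C_4$ and $e_{b3}\leftrightarrow e_{b4}$.

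The main obstacle will be the bookkeeping for the countable family of preimages appearing in Lemma \ref{index_calculation}: one must verify that the resulting sequence $(\alpha_m)$ of cusp exponents stays bounded away from $1$ and that $\min\{1/\alpha_{\max},\alpha_{\min}\}>1$ on an open set of eigenvalue parameters. The qualifier ``generically'' in the statement precisely allows us to exclude the codimension-one loci where some $\alpha_m$ coincides with $1$. Structurally this is the same argument that appears in Lemmas \ref{KS_fig5-network} and \ref{KS_lemma3-network} and in Theorems \ref{c34_negative_network} and \ref{c43_negative_network} for the $(B_3^-,B_3^-)$-network, so the argument transfers with only cosmetic changes to the simpler two-node setting considered here.
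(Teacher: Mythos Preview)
Your treatment of the $c$-indices via Lemma \ref{P&A_B2} is correct and coincides with the paper's argument. The gap is in the $n$-index computation.

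In Case (i) you write that the escaping set on the common cross-section equals the complement of $\dom(g_{3b})\cup\dom(g_{4b})$ together with its preimages \emph{under the contracting $C_3$-maps}. This is the wrong map. The fact that $g_{3b}$ is a contraction means precisely that any point landing in $\dom(g_{3b})$ is captured by the network; its $g_{3b}$-preimages are therefore not escaping points. The points that can still escape are those that repeatedly fall into $\dom(g_{4b})$, go around the \emph{non}-contracting $C_4$-cycle, and eventually hit the complement. Hence the family $\E_n$ must be obtained by iterating preimages under $g_{4b}$ (respectively $g_{4a}$), not under the $C_3$-maps. The paper carries this out in the symmetric Case (ii): it forms $\E_0\subset H_{a3}^{\inn}$ as the $\varphi_{a3}$-preimage of the complement and then takes $\E_n=g_{3a}^{-n}(\E_0)$, where $g_{3a}$ is the return map around the non-contracting cycle. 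This yields the explicit exponent sequence $\alpha_n=-\tilde{\delta}\sum_{j=0}^{n}\tilde{\rho}^j$, whose monotonicity (from $\alpha_{n+1}-\alpha_n=-\tilde{\delta}\tilde{\rho}^{n+1}>0$) is what makes the avoidance of $1$ a genuine codimension-one condition and hence justifies the word ``generically''.

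Relatedly, your sketch for $\sigma_{ba,4}^n$ pulls a single cusp back through $\varphi_{a4}$ and stops. That gives only $\E_0$ on $H_{a4}^{\inn}$; one must still iterate preimages under $g_{4a}$ and verify that the resulting exponent sequence satisfies the hypotheses of Lemma \ref{index_calculation}. The observation that $\varphi_{a4}$ ``sends cusps to cusps'' handles the base of the induction but not the tower of preimages needed to characterize the full escaping set.
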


\begin{proof}
The stability indices relative to the cycles follow from Lemma \ref{P&A_B2} (iii), where $\tilde{a}_1\tilde{a}_2=\tilde{\rho}$ and  $\tilde{b}_1\tilde{a}_2+\tilde{b}_2=\tilde{\delta}$ for $C_3$, while $a_1a_2=\rho$ and $b_1a_2+b_2=\delta$ for $C_4$.

For the indices with respect to the network, we show how to calculate $\sigma_{ba,3}^n$ in case (ii), the others can be determined in a similar manner. In case (ii) we have $\delta>0$, which together with $\rho,\tilde{\rho}>1$ implies that the return maps around $C_4$ are contractions.

This allows us to determine all points in $H_{a3}^{\inn}$, that are not attracted to the network, in two steps: First we calculate the preimage $\E_0 \subset H_{a3}^{\inn}$ under $\varphi_{a3}$ of the complement of $\dom(g_{3b})\cup \dom(g_{4b})$, the union of the domains of the return maps $g_{3b}$ and $g_{4b}$ in $H_{b2}^{\inn}$. Then we take the union of preimages of $\E_0$ under the return map $g_{3a}$, $\E_n:=g_{3a}^{-n}(\E_0)$. In the same way as the authors of \cite{KS} do, we restrict the calculations (and notation) to the two relevant components. Also, we adjust the constants $k, \hat{k}$ in every step.
\begin{align*}
\E_0&=\Big\{(x_2,x_4) \in H_{a3}^{\inn} \mid \varphi_{a3}(x_2,x_4) \notin D(g_{3b})\cup D(g_{4b}) \Big\}\\
&=\Big\{(x_2,x_4) \in H_{a3}^{\inn} \mid \Big(x_2^{\frac{c_{a3}}{e_{a2}}},x_4x_2^{\frac{c_{a4}}{e_{a2}}}\Big) \notin D(g_{3b})\cup D(g_{4b}) \Big\}\\
&=\Big\{(x_2,x_4) \in H_{a3}^{\inn} \mid  kx_2^{\frac{e_{b4}}{e_{b3}}\frac{c_{a4}}{e_{a2}}} \leq x_4x_2^{\frac{c_{a4}}{e_{a2}}} \leq \hat{k}x_2^{\frac{e_{b4}}{e_{b3}}\frac{c_{a4}}{e_{a2}}} \Big\}\\
&=\Big\{(x_2,x_4) \in H_{a3}^{\inn} \mid kx_2^{-\tilde{\delta}} \leq x_4 \leq \hat{k}x_2^{-\tilde{\delta}} \Big\}\\
\Rightarrow \E_1&=\Big\{ (x_2,x_4) \in H_{a3}^{\inn} \mid g_{3a}(x_2,x_4) \in \E_0 \Big\}\\
&= \Big\{ (x_2,x_4) \in H_{a3}^{\inn} \mid (B_1x_2^{\tilde{\rho}},C_1x_4x_2^{\tilde{\delta}}) \in \E_0 \Big\}\\
&= \Big\{ (x_2,x_4) \in H_{a3}^{\inn} \mid kx_2^{-\tilde{\delta}\tilde{\rho}-\tilde{\delta}} \leq x_4 \leq \hat{k}x_2^{-\tilde{\delta}\tilde{\rho}-\tilde{\delta}} \Big\}\\
\intertext{Iteration leads to}
\E_n&=\Big\{ (x_2,x_4) \in H_{a3}^{\inn} \mid kx_2^{\alpha_n} \leq x_4 \leq \hat{k}x_2^{\alpha_n}\Big\},\\
\text{where} \quad \alpha_n&=-\tilde{\delta} \sum_{j=0}^{n} \tilde{\rho}^j.
\end{align*}
The sequence of exponents $(\alpha_n)_{n \in \N}$ is monotonically increasing and unbounded since $\alpha_{n+1}-\alpha_n=-\tilde{\delta}\tilde{\rho}^{n+1}>0$. Therefore, in the generic case $\alpha_n \neq 1$ for all $n \in \N$, by Lemma \ref{index_calculation} we obtain $\sigma_{ba,3}^n>0$.

For the calculation of $\sigma_{ab}^n$ the sequence of exponents turns out to be $\beta_n=\frac{e_{b4}}{e_{b3}}\tilde{\rho}^n-(\rho-1)\sum_{j=0}^{n-1}\tilde{\rho}^j$. This gives $\beta_{n+1}-\beta_n=\frac{e_{b4}}{e_{b3}}\tilde{\rho}^n(\tilde{\rho}-\rho)$ and since $\delta>0 \Leftrightarrow \tilde{\rho}>\rho$, this sequence is also monotonically increasing, giving $\sigma_{ab}^n>0$.
\end{proof}

\section{Maps between cross-sections}
For the $(B_3^-,B_3^-)$-network the standard construction using the linearized flow gives the following local maps. They have been determined by \cite{KS}, but not all of them are listed explicitly in their work.
\begin{eqnarray*}
\phi_{123}: H_{2}^{\inn,1} \to H_{2}^{\out,3}, \quad \phi_{123}(1,x_2,x_3,x_4) & = & \Big(x_3^{\frac{c_{21}}{e_{23}}}, x_2x_3^{\frac{r_2}{e_{23}}},1, x_4 x_3^{-\frac{e_{24}}{e_{23}}}  \Big) \\
\phi_{231}: H_{3}^{\inn,2} \to H_{3}^{\out,1}, \quad \phi_{231}(x_1,1,x_3,x_4) & = & \Big(1, x_1^{\frac{c_{32}}{e_{31}}}, x_3x_1^{\frac{r_3}{e_{31}}}, x_4 x_1^{\frac{c_{34}}{e_{31}}} \Big) \\
\phi_{312}: H_{1}^{\inn,3} \to H_{1}^{\out,2}, \quad \phi_{312}(x_1,x_2,1,x_4) & = & \Big(x_1 x_2^{\frac{r_1}{e_{12}}}, 1, x_2^{\frac{c_{13}}{e_{12}}}, x_4x_2^{\frac{c_{14}}{e_{12}}} \Big) \\
\phi_{124}: H_{2}^{\inn,1} \to H_{2}^{\out,4}, \quad \phi_{124}(1,x_2,x_3,x_4) & = & \Big(x_4^{\frac{c_{21}}{e_{24}}}, x_2x_4^{\frac{r_2}{e_{24}}}, x_3 x_4^{-\frac{e_{23}}{e_{24}}},1  \Big) \\
\phi_{241}: H_{4}^{\inn,2} \to H_{4}^{\out,1}, \quad \phi_{241}(x_1,1,x_3,x_4) & = & \Big(1, x_1^{\frac{c_{42}}{e_{41}}}, x_3x_1^{\frac{c_{43}}{e_{41}}}, x_4 x_1^{\frac{r_4}{e_{41}}} \Big) \\
\phi_{412}: H_{1}^{\inn,4} \to H_{1}^{\out,2}, \quad \phi_{412}(x_1,x_2,x_3,1) & = & \Big(x_1 x_2^{\frac{r_1}{e_{12}}}, 1, x_3x_2^{\frac{c_{13}}{e_{12}}}, x_2^{\frac{c_{14}}{e_{12}}} \Big) 
\end{eqnarray*}
Only $\phi_{123}$ and $\phi_{124}$ are not defined on a whole neighbourhood of the trajectory. Their domains of definition within the transverse section $H_{2}^{\inn,1}$ are bounded by the inequalities $x_4<x_3^{\frac{e_{24}}{e_{23}}}$ and $x_3<x_4^{\frac{e_{23}}{e_{24}}}$, respectively. Combining local and global maps in the appropriate order gives the return maps as found in \cite{KS}. Reduced to the two components relevant for stability, these are:
\begin{eqnarray*}
\tilde{h}_1: H_{1}^{\out,2} \to H_{1}^{\out,2}, \quad \tilde{h}_1(x,y) & = & (x^ {\tilde{\rho}},yx^{\tilde{\nu}})  \quad \text{for} \quad y<x^{\frac{e_{24}}{e_{23}}}\\
\tilde{h}_2: H_{2}^{\out,3} \to H_{2}^{\out,3}, \quad \tilde{h}_2(x,y) & = & (x^{\tilde{\rho}},yx^{\tilde{\delta}})  \quad \text{for} \quad y<x^{-\tilde{\delta}}\\
\tilde{h}_3: H_{3}^{\out,1} \to H_{3}^{\out,1}, \quad \tilde{h}_3(x,y) & = & (x^{\tilde{\rho}},yx^{\tilde{\tau}})  \quad \text{for} \quad y<x^{\tilde{\sigma}}\\
h_1: H_{1}^{\out,2} \to H_{1}^{\out,2}, \quad h_1(x,y) & = & (xy^\nu,y^\rho) \quad \text{for} \quad x<y^{\frac{e_{23}}{e_{24}}}\\
h_2: H_{2}^{\out,4} \to H_{2}^{\out,4}, \quad h_2(x,y) & = & (x^\rho,yx^\delta) \quad \text{for} \quad y<x^{-\delta}\\
h_4: H_{4}^{\out,1} \to H_{4}^{\out,1}, \quad h_4(x,y) & = & (x^\rho,yx^\tau) \quad \text{for} \quad y<x^{\sigma}
\end{eqnarray*}
Note that we ignore technicalities such as constant coefficients induced by non-identity global maps. These do not influence the calculation of stability indices.

\end{document}